\newtheorem{theorem}{Theorem}[section]
\newtheorem{lemma}[theorem]{Lemma}
\newtheorem{proposition}[theorem]{Proposition}
\newtheorem{corollary}[theorem]{Corollary}
\theoremstyle{definition}
\newtheorem{definition}[theorem]{Definition}
\newtheorem{condition}[theorem]{Condition}
\theoremstyle{remark}
\DeclareFontFamily{U}{rsfs}{}
\DeclareFontShape{U}{rsfs}{m}{n}{%
   <5> <6> rsfs5
   <7> rsfs7
   <8> <9> <10> <10.95> <12> <14.4> <17.28> <20.74> <24.88> rsfs10
}{}
\DeclareSymbolFont{rsfs}{U}{rsfs}{m}{n}
\DeclareSymbolFontAlphabet{\mathscr}{rsfs}
\renewcommand{\phi}{\varphi}
\newcommand{\gpdact}{\odot}
\newcommand{\bndry}{\partial}
\DeclareMathSymbol{\boxprod}{\mathbin}{AMSa}{"03} % binary operator version of \square
\DeclareMathSymbol{\mixprod}{\mathbin}{AMSa}{"4F} % binary operator version of \triangledown
\newcommand{\congr}{\equiv}
\newcommand{\convto}{\Rightarrow}
\newcommand{\dirsum}{\oplus}
\newcommand{\Dirsum}{\bigoplus}
\newcommand{\from}{\leftarrow}
\newcommand{\hmtpc}{\simeq}
\newcommand{\homeo}{\approx}
\newcommand{\intersect}{\cap}
\newcommand{\includesin}{\hookrightarrow}
\newcommand{\iso}{\cong}
\newcommand{\laxto}{\Rightarrow}
\newcommand{\Mackey}[1]{\overline{#1}\vphantom{#1}}
\newcommand{\MackeyOp}[1]{{\underline {#1}}}
\newcommand{\smsh}{\wedge}
\newcommand{\susp}{\Sigma}
\newcommand{\tensor}{\otimes}
\newcommand{\union}{\cup}
\newcommand{\ParaQuot}[1]{/\!_{#1}\,}
\newcommand{\Cplx}{{\mathbb C}}
\newcommand{\DA}{{\mathbb D}}
\renewcommand{\H}{{\mathscr H}}
\newcommand{\K}{{\mathscr K}}
\newcommand{\Qtrn}{{\mathbb H}}
\newcommand{\Real}{\mathbb{R}}
\newcommand{\Z}{\mathbb{Z}}
\newcommand{\Lie}{{\mathscr L}}
\newcommand{\innprod}[3]{\langle {#1}, {#2} \rangle_{#3}}
\newcommand{\orb}[1]{{\mathscr{O}_{#1}}}
\newcommand{\stab}[1]{{\widehat{#1}}}
\newcommand{\sorb}[1]{{\stab{\mathscr O}_{#1}}}
\newcommand{\ortho}[2]{{\mathscr V}_{#1}^{#2}}
\newcommand{\ParaU}[1]{\K/{#1}}
\newcommand{\Para}[1]{\K_{#1}}
\newcommand{\Spec}[2]{{\mathscr S}#1_{#2}}
\newcommand{\Irr}{{\mathscr I}}
\DeclareMathOperator{\id}{id}
\DeclareMathOperator{\Aut}{Aut}
\DeclareMathOperator*{\colim}{colim}
\DeclareMathOperator{\End}{End}
\DeclareMathOperator{\Hom}{Hom}
\DeclareMathOperator{\Ind}{Ind}
\DeclareMathOperator{\Res}{Res}
\DeclareMathOperator{\Map}{Map}
\DeclareMathOperator{\Tor}{Tor}
\DeclareMathOperator{\Ext}{Ext}
\DeclareMathOperator{\Mod}{Mod}
\begin{document}
%
%topmatter
\title[Equivariant surgery for compact Lie groups]
{The equivariant Spivak normal bundle and equivariant
surgery for compact Lie groups}

\author{Steven R. Costenoble}
\address{Department of Mathematics\\103 Hofstra University\\
   Hempstead, NY 11550}
\email{Steven.R.Costenoble@Hofstra.edu}

\author{Stefan Waner}
%\address{Department of Mathematics\\103 Hofstra University\\
%   Hempstead, NY 11550}
\email{Stefan.Waner@Hofstra.edu}

\subjclass[2010]{Primary 57R65;
Secondary 55N25, 55N91, 57P10, 57R91, 57S15}

\date{\today}

\abstract
We generalize the results of \cite{CW:spivaknormal}
to compact Lie groups. Using a recently developed ordinary equivariant homology and
cohomology, we define equivariant Poincar\'e complexes with the properties
that (1) every compact $G$-manifold is an equivariant Poincar\'e complex,
(2) every finite equivariant Poincar\'e complex 
(with some mild additional hypotheses) has an equivariant spherical Spivak
normal fibration, and
(3) the $\Pi$-$\Pi$ Theorem holds for equivariant Poincar\'e pairs under
suitable gap hypotheses.
The nice behavior of the ordinary equivariant homology and cohomology theories
allows us to follow Wall's original line of argument closely.
\endabstract

\maketitle
\tableofcontents
%
%mainmatter
% !TEX root = CompactSurgery.tex
\section*{Introduction}

Equivariant surgery was pioneered by Dovermann, Petrie, and Rothenberg about 40 years ago
in a series of papers including
\cite{Pe:projectiveClassGroup}, \cite{Pe:spheresII}, \cite{DP:Gsurgery}, and \cite{DR:surgery}.
They worked predominantly with finite group actions, with
some preliminary work on compact Lie group actions.
Moreover, they assumed that all fixed sets were simply connected, and they sought
results only up to pseudo-equivalence.
One of their results was a $\Pi$-$\Pi$ theorem under these assumptions.

We would like a $\Pi$-$\Pi$ theorem allowing nontrivial fundamental groups and
working with true $G$-equivalence in the full generality of compact Lie group actions.
For this, 
we need a good theory of Poincar\'e duality,
which means we need good equivariant homology and cohomology theories.
In particular, these theories need to exhibit Poincar\'e duality for all 
smooth compact $G$-manifolds,
and detect $G$-equivalences (to the same degree that nonequivariant homology detects equivalences).
Since such theories have been lacking, it is no surprise that
little progress has been made on a $\Pi$-$\Pi$ theorem of the generality we want.

When $G$ is finite, we defined the appropriate ordinary homology and cohomology theories
in \cite{CW:spivaknormal}.
These theories allowed us to define equivariant Poincar\'e complexes
for finite group actions and show that with this definition we could prove
three things:
(1) every compact $G$-manifold is an equivariant Poincar\'e duality
complex,
(2) every finite $G$-Poincar\'e duality complex has an equivariant
spherical Spivak normal fibration, and
(3) the $\Pi$-$\Pi$ theorem holds for $G$-Poincar\'e pairs under
suitable gap hypotheses.
(In fact, that paper was too optimistic with respect to (3);
it is probably not as easy as claimed to adapt the argument of
\cite{DP:Gsurgery} to the context there.)

Recently, in \cite{CW:homologybook}, we completed the construction of the ordinary homology
and cohomology theories needed for the compact Lie case.
With this machinery in place, we now carry out
surgery for compact Lie group actions, at least to the point of proving a
$\Pi$-$\Pi$ theorem.
However, we do not try to adapt the argument in \cite{DP:Gsurgery} to compact Lie group actions,
which would appear to be very difficult (those authors did not do so, either),
but rather take advantage of the power
of the ordinary homology and cohomology theories to use
arguments very similar to Wall's original arguments in \cite{Wal:surgery}.
(The work here also fills in any gaps in \cite{CW:spivaknormal}.)
%
%This paper owes a great deal to the earlier work of Dovermann, Petrie and Rothenberg.
%Although our context is more complicated in that we do not assume that all fixed
%sets are simply connected, it is simpler in that
%we are looking for $G$-equivalences rather than pseudo-equivalences, which makes
%arguments depending on induction on subgroups easier.
%Further, our well-behaved equivariant ordinary homology and cohomology theories with associated equivariant Poincar\'e duality give us the necessary tools to allow us to follow Wall's techniques in \cite{Wal:surgery} closely.
We content ourselves with getting a $\Pi$-$\Pi$ theorem and, although we do not here
push on to discuss obstructions to surgery in the general case,
we believe we have provided a framework to do so.

\section{Ordinary Equivariant Homology}

Nonequivariantly, Wall \cite{Wal:surgery} used ordinary homology with local coefficients
to deal with spaces with nontrivial fundamental groups.
Equivariantly, we use the ordinary equivariant homology theory developed in \cite{CW:homologybook}
to deal with spaces with nontrivial fundamental groupoids and also with the problem that
Poincar\'e duality generally fails in integer or $RO(G)$-graded theories.
In this section we review the definition of this ordinary theory and some of its properties.

As discussed in \cite{CW:homologybook}, ordinary equivariant homology and cohomology theories are conveniently thought off as defined on a category of parametrized spaces over a given fixed basespace $X$. (Note: All spaces will be assumed to be compactly generated weak Hausdorff spaces without further comment.) 

\begin{definition}
Let $X$ be a $G$-space.
\begin{enumerate}
\item
Let $G\ParaU X$ be the category of {\em $G$-spaces over $X$}:
Its objects are pairs $(Y,p)$ where $Y$ is a $G$-space
and $p\colon Y\to X$ is a $G$-map.
A map $(Y,p)\to (Z,q)$ is a $G$-map
$f\colon Y\to Z$ such that $q\circ f = p$, i.e., a $G$-map over $X$.
\item
Let $G\Para X$ be the category of {\em ex-$G$-spaces over $X$}:
Its objects are triples $(Y,p,\sigma)$ where $(Y,p)$ is a $G$-space
over $B$ and $\sigma$ is a section of $p$,
i.e., $p\circ\sigma$ is the identity.
A map $(Y,p,\sigma)\to (Z,q,\tau)$ is a section-preserving $G$-map
$f\colon Y\to Z$ over $B$, i.e., a $G$-map over and under $B$.
\end{enumerate}
When the meaning is clear, we shall write just $Y$ for $(Y,p)$ or $(Y,p,\sigma)$. If $(Y,p)$ is a space over $X$, we write
$(Y,p)_+$ for the ex-$G$-space obtained by adjoining a disjoint section.
We shall also write $Y_+$ for $(Y,p)_+$.
\end{definition}

We define homotopies in these categories using general maps $Y\times I \to X$,
so homotopies are in general not fiberwise homotopies.
These are the cylinders in natural model category structures discussed in \cite{MaySig:parametrized};
we write $hG\ParaU X$ and $hG\Para X$ for the corresponding homotopy categories,
in the model category sense.

There is also a model category of {\em $G$-spectra} parametrized by $X$, which we denote
$G\Spec{}{X}$. The details are discussed in \cite{MaySig:parametrized}.
(We shall always use a complete $G$-universe to index spectra, so omit the universe
from our notation and any further mention.)
We use $hG\Spec{}{X}$ to denote the homotopy category of spectra over $X$,
and will usually write $[E,F]^G_X$ for $hG\Spec{}{X}(E,F)$.

The following definition appeared in \cite{CMW:orientation} and is variation
of tom Dieck's original definition \cite[I\S10]{tD:transfgroups}.

\begin{definition}
The {\em fundamental groupoid} of $X$ is the category whose objects are the
orbits $p\colon G/H\to X$ over $X$ and whose morphisms $p\to q$, where $q\colon G/K\to X$,
are the pairs $(\alpha,\omega)$ where $\alpha\colon G/H\to G/K$ and
$\omega\colon p\to q\alpha$ is a homotopy class of paths $G/H\times I\to X$ rel endpoints.
Composition is induced by composition of maps of orbits and composition of path classes:
If
\begin{align*}
 (\alpha,\omega)\colon & (p\colon G/H\to X) \to (q\colon G/K\to X) \qquad\text{and}\\
 (\beta,\eta)\colon & (q\colon G/K\to X) \to (r\colon G/L\to X),
\end{align*}
then
\[
 (\beta,\eta)\circ(\alpha,\omega) = (\beta\alpha, \eta\alpha * \omega) \colon p\to r.
\]
(We write composition of paths from right to left for consistency with the usual
convention for composition of morphisms.)
\end{definition}

Notice that $\Pi_G (*) = \orb G$, the orbit category of $G$. 
The projection $X\to *$ induces a functor $\phi\colon\Pi_G X \to \orb G$
that gives $\Pi_G X$ the structure of a {\em parametrized groupoid}, 
meaning that it satisfies a certain collection of axioms 
(given in \cite{CW:duality} and \cite{CMW:orientation}). 
We shall not need these axioms in this paper.
Note that $\phi^{-1}(G/H)$, the subcategory of objects mapping to $G/H$
and morphisms mapping to the identity, is $\Pi X^H$, the nonequivariant fundamental
groupoid of $X^H$.

%We could also consider the full subcategory of $hG\ParaU X$ generated by the orbits
%$G/H\to X$. This is not the fundamental groupoid but the {\em homotopy fundamental groupoid}
%$h\Pi_G X$ also considered by tom Dieck. 
%We shall not make use of this category here.

$\Pi_G X$ plays the role equivariantly that the fundamental group(oid) does nonequivariantly.
The analogue of the group ring $\Z\pi_1 X$ is the following related category.

\begin{definition}
The {\em stable fundamental groupoid} $\stab\Pi_G X$ of $X$ is the full subcategory
of $hG\Spec{}{X}$ on the suspensions of the orbits $p\colon G/H\to X$.
\end{definition}

Thus, we can think of $\stab\Pi_G X$ as having the same objects as $\Pi_G X$, but its maps are stable $G$-maps over $X$.
As a result, it is enriched over abelian groups, i.e., it is a preadditive category.
Calculationally, it can be described as a category of fractions on $\Pi_G X$:
The group of morphisms from $x\colon G/H\to X$ to $y\colon G/K\to X$ is the free abelian
group on equivalence classes of diagrams of the form $[x \from z \laxto y]$,
where $z\colon G/L\to X$, the map $z\to x$ is a strict map over $X$, and
the map $z\laxto y$ is a {\em lax} map \cite[2.2.2]{CW:homologybook}, meaning
a pair $(\alpha,\lambda)$, where $\alpha\colon G/L\to G/K$ and $\lambda$ is
a Moore path from $y\circ\alpha$ to $z$.
Only certain subgroups $L$ appear in the compact Lie case.
(See \cite[2.6.4]{CW:homologybook}.)
For a general compact
Lie group $G$, composition is tricky to describe in these terms.

%Geometrically, if $x\colon G/H\to X$ and $y\colon G/K\to X$ are objects of $\stab\Pi_G X$,
%we can describe a morphism from $x$ to $y$ as follows.
%Let $V$ be a representation of $G$ and consider the space $S_X^{V,x}$ defined as the pushout in
%the following diagram:
%\[
% \xymatrix{
%  G/H \ar[r]^x \ar[d] & X \ar[d] \\
%  G/H\times S^{V} \ar[r] & S_X^{V,x}
% }
%\]
%$S_X^{V,x}$ comes with a map $\bar x\colon S_X^{V,x}\to X$.
%Then a morphism from $x$ to $y$ is the stable equivalence class of a pair $(f,\lambda)$,
%where $f$ is a map from $S_X^{V,x}$ to $S_X^{V,y}$, and $\lambda$ is
%a Moore path from $\bar x$ to $\bar y\circ f$.

We have $\stab\Pi_G(*) = \sorb G$, the stable orbit category of $G$, and the projection
$X\to *$ induces a map $\stab\Pi_G X\to \sorb G$ for any $X$.
The description of $\sorb G$ as a category of fractions is
\cite{LMS:eqhomotopy} Corollary V.9.4 and subsequent discussion.
Note, in particular, that we can identify $\sorb G(G/H,G/G) \iso \sorb H(H/H,H/H)$ with $A(H)$,
the Burnside ring of $H$.

As mentioned above, we need to consider local coefficients. Equivariantly, there are two aspects
to this: a more general notion of grading and a general notion of coefficient system. We start with
the grading.

\begin{definition}
Let $\ortho G{}$ be the category whose objects are the 
orthogonal $G$-vector bundles over orbits of
$G$ and whose morphisms are the equivalence classes of
$G$-vector bundle maps between them. Here, two maps are equivalent if they are $G$-bundle
homotopic over the constant homotopy on base spaces.
Let $\psi\colon \ortho G{} \to \orb G$ be the functor taking the bundle $p\colon E\to G/H$ to 
its base space $G/H$, and taking a bundle
map to the underlying map of base spaces. Let $\ortho Gn$ be the full
subcategory of $\ortho G{}$ consisting of the $n$-dimensional bundles.
 \end{definition}

A $G$-vector bundle $\xi\colon E\to X$ determines a map $\xi^*\colon \Pi_G X \to
\ortho G{}$ over $\orb G$, with $\xi^*(p\colon G/H\to X) = p^*(\xi)$.
The map $\xi^*$ is an example of an {\em orthogonal representation} of
the fundamental groupoid $\Pi_G X$, which is just a map $\Pi_G X\to \ortho G{}$ over $\orb G$. We think of $\xi^*$ (actually, its natural isomorphism class) as the {\em dimension} of $\xi$.

If $V$ is any representation of $G$, there is a representation of any $\Pi_G X$ given by sending $p$ to $\phi(p)\times V$. We shall call this representation (of $\Pi_G X$) $V$ again.
If $M$ is a smooth $G$-manifold, its {\em tangent representation} $\tau$ is the representation of $\Pi_G M$ associated with the tangent bundle $TM$ of $M$.
Note that, if the fixed sets are not all orientable, $\tau$ will encode some twisting
data as we go around loops in $\Pi_G M$.

In \cite{CMW:orientation} we construct various other categories of bundles over orbits. 
In particular, we can work with {\em virtual representations}, which (for fundamental groupoids of compact spaces) are just the usual formal differences of representations.
It is the virtual representations of $\Pi_G X$ that we will use to grade the ordinary homology
of spaces over $X$.

Nonequivariantly, local coefficients can be defined as modules over the group ring
$\Z\pi_1 X$. Equivariantly, they are functors on the stable fundamental groupoid.

\begin{definition}
A {\em $\stab\Pi_G X$-module} is an additive functor from $\stab\Pi_G X$ to the category
of abelian groups. We shall consider both contravariant and covariant modules and adopt
the notational convention that contravariant modules will be written with a
bar on top, as $\Mackey T$, while covariant modules will be written with an underline,
as $\MackeyOp S$.
\end{definition}

We are now ready to introduce the kinds of cell complexes we will be working with.
Let $p\colon Y\to X$ be a $G$-space over $X$ and let $\gamma$ be a representation
of $\Pi_G X$.

\begin{definition}
A {\em $G$-CW($\gamma$) structure} on $Y$ is a decomposition
\[
 (Y,p) = \colim_n (Y^n,p^n)
\]
in $G\ParaU X$, where
\begin{enumerate}
\item
$Y^0$ is a disjoint union of orbits $(G/H,q)$ for which $\gamma(q) \iso G/H\times \Real^k$
for some $k$,
i.e., $H$ acts trivially on the fiber of $\gamma(q)$, and
\item
each $(Y^n,p^n)$, for $n>0$, is obtained from $(Y^{n-1},p^{n-1})$ by attaching cells of the
form $(G\times_H D(V),q)$ along maps $(G\times_H S(V),q)\to (Y^{n-1},p^{n-1})$, where
$|V| = n$ and $\gamma(q|G/H\times 0) \iso G\times_H(V\pm\Real^k)$ for some $k$.
\end{enumerate}
Given a particular $G$-CW($\gamma$) structure on $Y$, we say that $Y$ is a
{\em $G$-CW($\gamma$) complex.}
We will often write $Y^{\gamma+k}$ for the skeleton $Y^{|\gamma|+k}$.
\end{definition}

Thus, a $G$-CW($\gamma$) complex is one built out of cells that are locally modeled
on $\gamma$.

For the following definition, let $\Lie(G/H)$ denote the tangent plane to $G/H$ at $eH$,
as a representation of $H$.
We think of $\Lie(G/H)$ as the dimension of the manifold $G/H$.

\begin{definition}
A {\em dual $G$-CW($\gamma$) structure} on $Y$ is a decomposition
\[
 (Y,p) = \colim_n (Y^n,p^n)
\]
in $G\ParaU X$, where
\begin{enumerate}
\item
$Y^0$ is a disjoint union of orbits $(G/H,q)$ for which $\Lie(G/H) = 0$ (i.e., $G/H$ is finite) and
$\gamma(q) \iso G/H\times \Real^k$
for some $k$, and
\item
each $(Y^n,p^n)$, for $n>0$, is obtained from $(Y^{n-1},p^{n-1})$ by attaching cells of the
form $(G\times_H D(V),q)$ along maps $(G\times_H S(V),q)\to (Y^{n-1},p^{n-1})$, where
$|V + \Lie(G/H)| = n$ and $\gamma(q|G/H\times 0) \iso G\times_H(V+\Lie(G/H)\pm\Real^k)$ for some $k$.
\end{enumerate}
Given a particular dual $G$-CW($\gamma$) structure on $Y$, we say that $Y$ is a
{\em dual $G$-CW($\gamma$) complex.}
We will often write $Y^{\gamma+k}$ for the skeleton $Y^{|\gamma|+k}$.
\end{definition}

The difference between $G$-CW and dual $G$-CW complexes is this: In an ordinary $G$-CW complex,
we think of a cell $G\times_H D(V)$ as being $V$-dimensional, ignoring $G/H$.
In a dual complex, such a cell is thought of as $(V+\Lie(G/H))$-dimensional, which is the
geometric dimension of the manifold $G\times_H D(V)$.
Of course, if $G$ is finite, there is no difference.

We define relative $G$-CW($\gamma$) and relative dual $G$-CW($\gamma$) complexes in the obvious way.
A based complex is an ex-space $(Y,p,\sigma)$ such that $(Y,\sigma(X))$ is a relative complex.
These complexes are discussed in detail in \cite{CW:homologybook}, where the expected results are shown,
including suitable approximation and Whitehead theorems.
It also follows from results there that it is unambigous to say that $Y$
has the homotopy type of a finite complex, because a finite $G$-CW($\gamma$) complex
has the homotopy type of a finite dual $G$-CW($\delta$) complex and vice versa,
for any $\gamma$ and $\delta$.

A crucial example we have in mind is that of a compact smooth $G$-manifold $M$.
In \cite{Ill:triangulation}, Illman showed that every manifold has a $G$-triangulation,
which can be viewed as a $G$-CW(0) structure (over a point or over $M$ itself).
We can also consider the cell structure dual to the triangulation;
considering $M$ as a space over itself, this structure is a dual $G$-CW($\tau$) structure,
where $\tau$ is the tangent representation.

Given a $G$-CW complex, we define its chain complex.
In the following, if $q\colon G/H\to X$ is an orbit over $X$
and $\xi$ is a $G$-vector bundle over $G/H$,
$S_X^{\xi,q}$ denotes the
ex-space over $X$ given as the pushout in the following diagram:
\[
 \xymatrix{
  G/H \ar[r]^q \ar[d] & X \ar[d] \\
  S^{\xi} \ar[r] & S_X^{\xi,q}
 }
\]
Here, $S^{\xi}$ denotes the space obtained by taking the fiberwise one-point compactification
of $\xi$ and the map $G/H\to S^{\xi}$ is the inclusion of
the compactification points.
If $V$ is a representation of $H$, we write $\xi+V$ as shorthand for the bundle
$G\times_H (\xi_0+V)$, where $\xi = G\times_H \xi_0$.
In particular, we shall write $S_X^{V,q}$ as shorthand for 
the ex-space obtained as in the diagram above with $\xi = G\times_H V$.

\begin{definition}
Let $Y$ be a $G$-CW($\gamma$) complex over $X$.
The {\em cellular chain complex} of $Y$ is the chain complex $\Mackey C^G_{\gamma+*}(Y)$
of contravariant $\stab\Pi_G X$-modules defined by
\[
 \Mackey C^G_{\gamma+k}(Y)(q) 
  = [\susp_X^\infty S^{\gamma(q)+k,q}_X, \susp_X^\infty Y^{\gamma+k}/_X Y^{\gamma+k-1}]^G_X
\]
for $|\gamma|+k \geq 0$,
where ``$/_X$'' denotes the fiberwise quotient over $X$.
\end{definition}

It takes a bit of work, done in \cite{CW:homologybook},
to show that this defines a functor on $\stab\Pi_G X$.
In fact, each $\Mackey C^G_{\gamma+k}(Y)$ is a {\em free} module, in the sense that
it is a direct sum of modules of the form $\stab\Pi_G X(-,p)$, where $p$ ranges
over the centers of the $(|\gamma|+k)$-cells of $Y$.

For dual complexes we have a similar definition.
We use the crucial duality, shown in \cite{CW:homologybook} or \cite{MaySig:parametrized},
that, if $q\colon G/H\to X$ and $r\colon G/K\to X$, then
\[
 [\susp_X^\infty S_X^{V-\Lie(G/H),q}, \susp_X^\infty S_X^{V-\Lie(G/K),r}]^G_X
  \iso [\susp_X^\infty (G/K,r)_+, \susp_X^\infty (G/H,q)_+]^X_G
\]
for $V$ large enough to contain copies of $\Lie(G/H)$ and $\Lie(G/K)$.
In this sense, the stable dual over $X$ of $q\colon G/H\to X$
is $S_X^{-\Lie(G/H),q}$.

\begin{definition}
Let $Y$ be a dual $G$-CW($\gamma$) complex over $X$.
The {\em dual cellular chain complex} of $Y$ is the chain complex $\MackeyOp C^G_{\gamma+*}(Y)$
of covariant $\stab\Pi_G X$-modules defined by
\[
 \MackeyOp C^G_{\gamma+k}(Y)(q) 
  = [\susp_X^\infty S^{\gamma(q)-\Lie(G/H)+k,q}_X, \susp_X^\infty Y^{\gamma+k}/_X Y^{\gamma+k-1}]^G_X
\]
for $|\gamma|+k \geq 0$.
\end{definition}

That this gives covariant $\stab\Pi_G X$-modules follows from the duality mentioned above.
Again, they are free modules generated by the centers of the dual cells.

It is now easy to define homology and cohomology groups. In the following definition, if
$\Mackey C$ is a contravariant $\stab\Pi_G X$-module and $\MackeyOp S$ is a covariant module,
then the tensor product is given by a coend:
\[
 \Mackey C \tensor_{\stab\Pi_G X} \MackeyOp S
  = \int^{q\in\stab\Pi_G X} \Mackey C(q)\tensor \MackeyOp S(q).
\]

\begin{definition}
Let $Y$ be a $G$-CW($\gamma$) complex over $X$, let $\MackeyOp S$ be a covariant
$\stab\Pi_G X$-module, and let $\Mackey T$ be a contravariant $\stab\Pi_G X$-module.
Then we let
\[
 H^G_{\gamma+k}(Y;\MackeyOp S) = 
  H_{\gamma+k}(\Mackey C^G_{\gamma+*}(Y)\tensor_{\stab\Pi_GX}\MackeyOp S)
\]
and
\[
 H_G^{\gamma+k}(Y;\Mackey T) =
  H^{\gamma+k}\Hom_{\stab\Pi_GX}(\Mackey C^G_{\gamma+*}(Y), \Mackey T).
\]
We call these the {\em ordinary homology and cohomology of $Y$}.
Similarly, if $Y$ is a dual $G$-CW($\gamma$) complex, we let
\[
 \H^G_{\gamma+k}(Y;\Mackey T) =
  H_{\gamma+k}(\MackeyOp C^G_{\gamma+*}(Y) \tensor_{\stab\Pi_GX} \Mackey T)
\]
and
\[
 \H_G^{\gamma+k}(Y;\MackeyOp S) =
  H^{\gamma+k}\Hom_{\stab\Pi_GX}(\MackeyOp C^G_{\gamma+*}(Y), \MackeyOp S).
\]
We call these the {\em dual ordinary homology and cohomology of $Y$}.
\end{definition}

We define relative and reduced homology and cohomology in the obvious ways,
by taking quotient chain complexes.

In \cite{CW:homologybook} we show that these groups satisfy the axioms that
qualify them as equivariant homology and cohomology theories on spaces over $X$,
including suspension isomorphisms that allow us to consider them, for a fixed $\gamma$,
as $RO(G)$-graded. But, we think of them as ``$RO(\Pi_G X)$''-graded,
where $RO(\Pi_G X)$ is the group of virtual representations of $\Pi_GX$.
For $X$ compact, every such virtual representation can be written as $\gamma-V$
for a representation $V$, so the suspension isomorphisms suffice.
For more general $X$ it takes more work, done in \cite{CW:homologybook}.

We call these theories ordinary because they also satisfy dimension axioms.
Those axioms take the following forms, in which all isomorphisms are natural
in $(G/H,q)\in \stab\Pi_GX$ and $k$ is an integer:
\begin{align*}
 \tilde H^G_{\gamma+k}(S_X^{\gamma(q),q};\MackeyOp S)
  &\iso \begin{cases}
  		\MackeyOp S(q) & \text{if $k = 0$} \\
		0 & \text{if $k\neq 0$}
	   \end{cases} \\
 \tilde H_G^{\gamma+k}(S_X^{\gamma(q),q};\Mackey T)
  &\iso \begin{cases}
  		\Mackey T(q) & \text{if $k = 0$} \\
		0 & \text{if $k\neq 0$}
	   \end{cases} \\
 \tilde \H^G_{\gamma+k}(S_X^{\gamma(q)-\Lie(G/H),q};\Mackey T)
  &\iso \begin{cases}
  		\Mackey T(q) & \text{if $k = 0$} \\
		0 & \text{if $k\neq 0$}
	   \end{cases} \\
 \tilde \H_G^{\gamma+k}(S_X^{\gamma(q)-\Lie(G/H),q};\MackeyOp S)
  &\iso \begin{cases}
  		\MackeyOp S(q) & \text{if $k = 0$} \\
		0 & \text{if $k\neq 0$}
	   \end{cases} \\
\end{align*}
In the dual cases, we suspend the left-hand sides sufficiently so that
subtracting $\Lie(G/H)$ makes sense, and we use duality to reverse the variance
and interpret those left-hand sides as functors on $\stab\Pi_GX$.
Using the fact that $S_X^{\gamma(q)-\Lie(G/H),q}$ is the stable dual
of $S_X^{\gamma(q),q}$, we can see that the dual of ordinary homology is {\em dual}
ordinary cohomology, {\em not} ordinary cohomology;
similarly, the dual of ordinary cohomology is dual homology.
(See \cite[\S 2.7]{CW:homologybook} for a discussion of the homological duality
over $X$ being used here.)
Of course, this is a distinction that vanishes if $G$ is finite.

Among the many nice properties these theories have, we point out
several useful spectral sequences.
The first are the Atiyah-Hirzebruch spectral sequences, for which we need
the following definition.

\begin{definition}\label{def:coeffsystem}
Let $\tilde h^G_*$ be an $RO(G)$-graded homology theory on spaces over $X$
and let $\gamma$ be a representation of $\Pi_G X$.
For $V$ a representation of $G$,
we define the {\em $(V-\gamma)$-coefficient system} of $\tilde h^G_*$ to be
the covariant $\stab\Pi_G X$-module $\MackeyOp h^G_{V-\gamma}$ defined
on $q\colon G/H\to X$ by
\[
 \MackeyOp h^G_{V-\gamma}(q) = \tilde h^G_V(S_X^{\gamma(q),q}).
\]
We define the {\em dual $(V-\gamma)$-coefficient system}
to be the contravariant $\stab\Pi_G X$-module $\MackeyOp h^{G,\Lie}_{V-\gamma}$ defined by
\[
 \Mackey h^{G,\Lie}_{V-\gamma}(q)
  = \tilde h^G_V(S_X^{\gamma(q)-\Lie(G/H),q}).
\]
(We use duality to view this as a contravariant functor in $q$.)
Similarly, if $\tilde h_G^*$ is an $RO(G)$-graded cohomology theory on spaces over $X$,
we define coefficient systems $\Mackey h_G^{V-\gamma}$ and $\MackeyOp h_{G,\Lie}^{V-\gamma}$ by
\[
 \Mackey h_G^{V-\gamma}(q) = \tilde h_G^V(S_X^{\gamma(q),q})
\]
and
\[
 \MackeyOp h_{G,\Lie}^{V-\gamma}(q)
  = \tilde h_G^V(S_X^{\gamma-\Lie(G/H),q}).
\]
\end{definition}

If $\tilde h^G_*$ and $\tilde h_G^*$ are represented by the same spectrum over $X$,
then the duality theory of \cite[\S 2.7]{CW:homologybook}
shows that
\[
 \MackeyOp h^G_{V-\gamma} \iso \MackeyOp h_{G,\Lie}^{V-\gamma}
 \quad\text{and}\quad
 \Mackey h^{G,\Lie}_{V-\gamma} \iso \Mackey h_G^{V-\gamma}.
\]

The following is \cite[Theorem 3.3.16]{CW:homologybook}.
It comes from analyzing the exact couples obtained by applying $\tilde h$
to the skeletal filtrations of ordinary or dual $G$-CW($\gamma$) complexes.
We state the result for reduced theories, which implies analogous results
for the unreduced case.

\begin{theorem}[Atiyah-Hirzebruch Spectral Sequences]
Let $\tilde h^G_*$ be an $RO(G)$-graded homology theory on spaces over $X$
and let $Y$ be an ex-space over $X$.
Then we have strongly convergent spectral sequences
\[
 E^2_{p,q} = \tilde H^G_{\gamma+p}(Y;\MackeyOp h^G_{q-\gamma})
  \convto \tilde h^G_{p+q}(Y)
\]
and
\[
 E^2_{p,q} = \tilde\H^G_{\gamma+p}(Y;\Mackey h^{G,\Lie}_{q-\gamma})
  \convto \tilde h^G_{p+q}(Y).
\]
Similarly, if $\tilde h_G^*$ is an $RO(G)$-graded cohomology theory on spaces over $X$,
we have conditionally convergent spectral sequences
\[
 E_2^{p,q} = \tilde H_G^{\gamma+p}(Y;\Mackey h_G^{q-\gamma})
  \convto \tilde h_G^{p+q}(Y)
\]
and 
\[
 E_2^{p,q} = \tilde\H_G^{\gamma+p}(Y;\MackeyOp h_{G,\Lie}^{q-\gamma})
  \convto \tilde h_G^{p+q}(Y).
\]
\qed
\end{theorem}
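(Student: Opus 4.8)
The plan is to run the classical Atiyah--Hirzebruch construction fiberwise over $X$, with the ``$RO(\Pi_G X)$''-grading built in; I will describe the homological $G$-CW($\gamma$) case, since the dual case is the same argument with dual $G$-CW($\gamma$) complexes (and the duality isomorphism recalled above used to reverse variance) and the cohomological cases are formally dual. First I would reduce to the case that $Y$ carries a $G$-CW($\gamma$) structure: by the approximation theorem of \cite{CW:homologybook} every ex-space over $X$ is weakly equivalent to a based $G$-CW($\gamma$) complex, and $\tilde h^G_*$, being a homology theory on spaces over $X$, is unchanged by such equivalences. So fix a based $G$-CW($\gamma$) structure on $Y$, i.e.\ a relative $G$-CW($\gamma$) structure on $(Y,\sigma(X))$, and apply $\tilde h^G_*$ to the skeletal filtration $\{Y^{\gamma+s}\}$. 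The long exact sequences of the pairs $(Y^{\gamma+s},Y^{\gamma+s-1})$ assemble into an exact couple with $D^1_{s,t}=\tilde h^G_{s+t}(Y^{\gamma+s})$ and $E^1_{s,t}=\tilde h^G_{s+t}(Y^{\gamma+s}/_X Y^{\gamma+s-1})$, and the usual bookkeeping yields a spectral sequence $\{E^r\}$.

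The heart of the matter is the identification of $E^2$, which I expect to be the main obstacle. Collapsing $Y^{\gamma+s-1}$ inside $Y^{\gamma+s}$ exhibits $Y^{\gamma+s}/_X Y^{\gamma+s-1}$ as a fiberwise wedge over $X$ of twisted sphere ex-spaces, one for each $s$-cell of $Y$ with center $q_\alpha\colon G/H_\alpha\to X$; by the defining condition on a cell, $\gamma$ restricted to the cell agrees with the relevant representation up to a trivial summand, so the collapsed cell is stably $S_X^{\gamma(q_\alpha)+s,q_\alpha}=\susp^s_X S_X^{\gamma(q_\alpha),q_\alpha}$. Applying $\tilde h^G_{s+t}$ and using the suspension isomorphism to strip the trivial summand gives $E^1_{s,t}\iso\bigoplus_\alpha\tilde h^G_t(S_X^{\gamma(q_\alpha),q_\alpha})=\bigoplus_\alpha\MackeyOp h^G_{t-\gamma}(q_\alpha)$. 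Since $\Mackey C^G_{\gamma+s}(Y)$ is the free contravariant $\stab\Pi_G X$-module on the generators $q_\alpha$, evaluating the coend defining $\tensor_{\stab\Pi_G X}$ identifies this with $\Mackey C^G_{\gamma+s}(Y)\tensor_{\stab\Pi_G X}\MackeyOp h^G_{t-\gamma}$. I would then check that under this identification $d^1$ becomes the cellular boundary tensored with the identity: the boundary in $\Mackey C^G_{\gamma+*}(Y)$ and the differential $d^1$ are both induced by the same connecting maps $Y^{\gamma+s}/_X Y^{\gamma+s-1}\to\susp_X(Y^{\gamma+s-1}/_X Y^{\gamma+s-2})$ of the filtration — one by applying stable maps out of the twisted spheres, the other by applying $\tilde h^G_*$ — so their agreement is a naturality statement, once one uses (from \cite{CW:homologybook}) that the $\stab\Pi_G X$-module structure on the chain groups and that on the coefficient system are induced the same way, by stable maps of orbits over $X$ acting on twisted spheres. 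This yields $E^2_{s,t}\iso\tilde H^G_{\gamma+s}(Y;\MackeyOp h^G_{t-\gamma})$, as required.

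Finally, the skeletal filtration is exhaustive and, in each total degree, bounded below (cells occur only in nonnegative CW($\gamma$)-dimension, so $Y^{\gamma+s}=\sigma(X)$ for $s<-|\gamma|$), so the standard Boardman-type criterion gives strong convergence of the homology spectral sequence to $\tilde h^G_{s+t}(Y)$; independence of the chosen $G$-CW($\gamma$) structure follows from the Whitehead theorem of \cite{CW:homologybook} together with naturality. For cohomology the same exact couple runs the other way, so one obtains only a $\lim^1$-controlled, hence conditionally convergent, spectral sequence, exactly as stated. The dual statements follow verbatim after replacing $G$-CW($\gamma$) structures by dual ones, $\Mackey C^G_{\gamma+*}(Y)$ by $\MackeyOp C^G_{\gamma+*}(Y)$, and the twisted spheres $S_X^{\gamma(q),q}$ by their stable duals $S_X^{\gamma(q)-\Lie(G/H),q}$, using the duality isomorphism recalled above to read $\Mackey h^{G,\Lie}_{t-\gamma}$ (resp.\ $\MackeyOp h_{G,\Lie}^{t-\gamma}$) as a functor of the correct variance. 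As indicated, the only nonformal step is the compatibility check in the previous paragraph — matching $d^1$ with the cellular boundary while tracking the $\stab\Pi_G X$-module structures; everything else is the classical argument carried out fiberwise.
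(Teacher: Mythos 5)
Your proof is correct and takes the same route the paper indicates: the paper cites \cite[Theorem 3.3.16]{CW:homologybook} and explicitly says the spectral sequences arise from the exact couples of the skeletal filtrations of (ordinary or dual) $G$-CW($\gamma$) complexes, which is exactly the construction you carry out, including the CW-approximation reduction, the identification of $E^1$ with cellular chains tensored with the coefficient system via the twisted-sphere description of the skeletal quotients, the compatibility of $d^1$ with the cellular boundary, and the bounded-below/exhaustive argument for convergence. No genuine gap.
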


Also useful are the universal coefficients spectral sequences in which
the universal coefficients are the free $\stab\Pi_G X$-modules.
We introduce the following notation, which we will use periodically throughout this paper.
We switch to unreduced theories, which is the case we will use most,
with analogous notations and results for the relative and reduced cases.

\begin{definition}\label{def:universalcoeffs}
For $Y$ a $G$-space over $X$, write
$\Mackey H^G_*(Y)$ for the contravariant $\stab\Pi_GX$-module whose value at $q$ is
\[
 \Mackey H^G_*(Y)(q) = H^G_*(Y;\stab\Pi_GX(q,-)).
\]
Similarly, we have modules $\MackeyOp\H^G_*(Y)$, $\MackeyOp H_G^*(Y)$, and
$\Mackey\H_G^*(Y)$ defined by
\begin{align*}
 \MackeyOp\H^G_*(Y)(q) &= \H^G_*(Y;\stab\Pi_GX(-,q)),\\
 \MackeyOp H_G^*(Y)(q) &=  H_G^*(Y;\stab\Pi_GX(-,q)),\text{ and}\\
 \Mackey \H_G^*(Y)(q) &= \H_G^*(Y;\stab\Pi_GX(q,-)).
\end{align*}
\end{definition}

Note that $\Mackey H^G_*(Y)$ is just the homology of $\Mackey C^G_*(Y)$
considered as a chain complex of $\stab\Pi_GX$-modules,
and similarly for $\MackeyOp\H^G_*(Y)(q)$.

The following is \cite[Theorem 3.3.18]{CW:homologybook}
(with some additional cases).
Here, $\Tor_*^{\stab\Pi_GX}$ and $\Ext^*_{\stab\Pi_GX}$ are the derived
functors of $\tensor_{\stab\Pi_GX}$ and $\Hom_{\stab\Pi_GX}$, respectively.

\begin{theorem}[Universal Coefficients Spectral Sequences]
Let $Y$ be a $G$-space over $X$, let $\MackeyOp S$ be a covariant
$\stab\Pi_GX$-module, and let $\Mackey T$ be a contravariant $\stab\Pi_GX$-module.
Then we have the following natural spectral sequences:
\begin{align*}
 E^2_{p,q} = \Tor_p^{\stab\Pi_GX}(\Mackey H^G_{\gamma+q}(Y),\MackeyOp S)
   &\convto  H^G_{\gamma+p+q}(Y;\MackeyOp S) \\
 E^2_{p,q} = \Tor_p^{\stab\Pi_GX}(\MackeyOp\H^G_{\gamma+q}(Y),\Mackey T)
   &\convto \H^G_{\gamma+p+q}(Y;\Mackey T) \\
 E_2^{p,q} = \Ext^p_{\stab\Pi_GX}(\Mackey H^G_{\gamma+q}(Y),\Mackey T)
   &\convto  H_G^{\gamma+p+q}(Y;\Mackey T) \\
 E_2^{p,q} = \Ext^p_{\stab\Pi_GX}(\MackeyOp\H^G_{\gamma+q}(Y),\MackeyOp S)
   &\convto \H_G^{\gamma+p+q}(Y;\MackeyOp S).
\end{align*}
If $Y$ is a finite ordinary or dual $G$-CW($\gamma$) complex, as appropriate, we also
have the following natural spectral sequences:
\begin{align*}
 E_2^{p,q} = \Ext^p_{\stab\Pi_GX}(\MackeyOp H_G^{\gamma-q}(Y),\MackeyOp S)
   &\convto  H^G_{\gamma-p-q}(Y;\MackeyOp S) \\
 E_2^{p,q} = \Ext^p_{\stab\Pi_GX}(\Mackey\H_G^{\gamma-q}(Y),\Mackey T)
   &\convto \H^G_{\gamma-p-q}(Y;\Mackey T) \\
 E^2_{p,q} = \Tor_p^{\stab\Pi_GX}(\MackeyOp H_G^{\gamma-q}(Y),\Mackey T)
   &\convto  H_G^{\gamma-p-q}(Y;\Mackey T) \\
 E^2_{p,q} = \Tor_p^{\stab\Pi_GX}(\Mackey\H_G^{\gamma-q}(Y),\MackeyOp S)
   &\convto \H_G^{\gamma-p-q}(Y;\MackeyOp S).
\end{align*}
\qed
\end{theorem}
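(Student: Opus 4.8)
The plan is to exhibit every one of the eight spectral sequences as an instance of the standard hyperhomology or hypercohomology spectral sequence of a bounded-below complex of projective modules, specialized to the cellular chain complex $\Mackey C^G_{\gamma+*}(Y)$ or the dual cellular chain complex $\MackeyOp C^G_{\gamma+*}(Y)$. The underlying algebra is unproblematic: the contravariant (resp.\ covariant) $\stab\Pi_GX$-modules form a Grothendieck abelian category in which the representables $\stab\Pi_GX(-,p)$ (resp.\ $\stab\Pi_GX(p,-)$) are a projective generating set, so $\Tor_*^{\stab\Pi_GX}$, $\Ext^*_{\stab\Pi_GX}$, and the two spectral sequences of a double complex are available and behave exactly as over a ring. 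For the first four spectral sequences there is nothing new to do: by the remark after Definition~\ref{def:universalcoeffs}, $H_*$ of $\Mackey C^G_{\gamma+*}(Y)$ is $\Mackey H^G_{\gamma+*}(Y)$ and $H_*$ of $\MackeyOp C^G_{\gamma+*}(Y)$ is $\MackeyOp\H^G_{\gamma+*}(Y)$, while $H^G_*(Y;\MackeyOp S)$, $\H^G_*(Y;\Mackey T)$, $H_G^*(Y;\Mackey T)$, $\H_G^*(Y;\MackeyOp S)$ are by definition computed by tensoring or Hom-ing one of these free complexes with the coefficient module; resolving the coefficient module and running the two spectral sequences of the resulting double complex — one of which collapses because a free complex is degreewise flat and projective — gives precisely the stated $E^2$-pages and abutments. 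This is \cite[Theorem 3.3.18]{CW:homologybook}.

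For the last four I would use the finiteness of $Y$ to pass to a dual description of the coefficient modules $\MackeyOp H_G^*(Y)$ and $\Mackey\H_G^*(Y)$. When $Y$ is finite, $\Mackey C^G_{\gamma+*}(Y)$ and $\MackeyOp C^G_{\gamma+*}(Y)$ are \emph{bounded} complexes of \emph{finitely generated} free modules, and for a finitely generated free module $F=\bigoplus_i\stab\Pi_GX(-,p_i)$ the Yoneda lemma together with the co-Yoneda (density) formula give natural isomorphisms
\[
 F\tensor_{\stab\Pi_GX}\MackeyOp S\iso\Hom_{\stab\Pi_GX}(F^{\vee},\MackeyOp S),
 \qquad
 \Hom_{\stab\Pi_GX}(F,\Mackey T)\iso F^{\vee}\tensor_{\stab\Pi_GX}\Mackey T,
\]
where $F^{\vee}=\bigoplus_i\stab\Pi_GX(p_i,-)$ is the dual free module of the opposite variance (and symmetrically starting from a free covariant module). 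Applying $(-)^{\vee}$ degreewise to $\Mackey C^G_{\gamma+*}(Y)$ yields a bounded complex of finitely generated free \emph{covariant} modules; by the definition of $H_G^*$ evaluated on the representable coefficient systems $\stab\Pi_GX(-,q)$, the cohomology of this dual complex is exactly $\MackeyOp H_G^{\gamma+*}(Y)$, and likewise the degreewise dual of $\MackeyOp C^G_{\gamma+*}(Y)$ is a bounded complex of free contravariant modules with cohomology $\Mackey\H_G^{\gamma+*}(Y)$. Substituting these isomorphisms into the (co)chain complexes computing $H^G_*(Y;\MackeyOp S)$, $\H^G_*(Y;\Mackey T)$, $H_G^*(Y;\Mackey T)$, and $\H_G^*(Y;\MackeyOp S)$ rewrites each as a $\Hom$ or a tensor product of a bounded complex of free modules against the same coefficient module, and the standard hyper(co)homology spectral sequence of that complex — whose $E_2$-page is $\Ext$ or $\Tor$ of its cohomology against the coefficient module — is the asserted spectral sequence. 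Boundedness of the complex, which is where finiteness of $Y$ enters, forces the spectral sequence into a bounded region; the degree reversal inherent in dualizing a complex is what turns the ``$\gamma+k$'' gradings of the first four statements into the ``$\gamma-q$'' and ``$\gamma-p-q$'' gradings here.

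I expect the main obstacle to be purely bookkeeping: verifying that $(-)^{\vee}$ carries the cellular boundary maps to the coboundary maps of the complex computing $H_G^*(Y;\stab\Pi_GX(-,q))$, compatibly and naturally in $q\in\stab\Pi_GX$, so that the displayed isomorphisms upgrade from single modules to an isomorphism of complexes whose cohomology is genuinely the coefficient system $\MackeyOp H_G^{\gamma+*}(Y)$ (resp.\ $\Mackey\H_G^{\gamma+*}(Y)$) of Definition~\ref{def:universalcoeffs}. A minor additional point is that for compact Lie $G$ the morphism groups of $\stab\Pi_GX$ are complicated (Section~1), so one should record explicitly that only the formal ``ringoid with enough projectives'' properties are used. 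Finally, since the statement concerns ordinary complexes for the ordinary theories and dual complexes for the dual theories, I would prove each of the four on whichever model makes the relevant chain complex free — using, where convenient, that a finite $G$-CW($\gamma$) complex and a finite dual $G$-CW($\delta$) complex have the same homotopy type — so that no compatibility between the two kinds of cell structure is needed.
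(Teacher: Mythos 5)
Your proposal is sound and, as far as I can judge, complete in outline. The paper itself does not prove this theorem — it cites \cite[Theorem 3.3.18]{CW:homologybook} and adds ``(with some additional cases)'' — so there is no in-text argument to compare against; what you have written is the natural proof and very plausibly the one given in the cited source. For the first four spectral sequences the hyper(co)homology double-complex argument on the free cellular chain complex is exactly right; for the last four the duality $F\mapsto F^{\vee}$ on finitely generated free modules, together with the Yoneda/co-Yoneda identifications
\[
 F\tensor_{\stab\Pi_GX}\MackeyOp S\iso\Hom_{\stab\Pi_GX}(F^{\vee},\MackeyOp S),
 \qquad
 \Hom_{\stab\Pi_GX}(F,\Mackey T)\iso F^{\vee}\tensor_{\stab\Pi_GX}\Mackey T,
\]
is the correct mechanism, and you correctly identify the cohomology of $(\Mackey C^G_{\gamma+*}(Y))^{\vee}$ with $\MackeyOp H_G^{\gamma+*}(Y)$ directly from Definition~\ref{def:universalcoeffs}. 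Two small points worth making explicit in a written-up version: (1) finiteness is used in \emph{two} ways, not just for boundedness/convergence as you emphasize --- finite generation in each degree is what makes $\Hom(\bigoplus_i\stab\Pi_GX(-,p_i),\Mackey T)\iso\bigoplus_i\Mackey T(p_i)$ (rather than the product), which is the step that makes $(-)^{\vee}$ interchange $\Hom$ and $\tensor$ at all; (2) the ``bookkeeping'' you flag about the differentials is in fact automatic, since both displayed isomorphisms are instances of natural transformations between functors of $F$, so they commute with the cellular boundary maps with no separate verification needed. The grading reversal $\gamma+k\mapsto\gamma-q$ is, as you say, exactly the reindexing that dualizing a chain complex into a cochain complex produces. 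No gaps.
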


Combining univeral coefficients and the Atiyah-Hirzebruch spectral sequence
gives us a very useful corollary.

\begin{corollary}\label{cor:universalisomorphism}
Let $f\colon Y\to Z$ be a $G$-map of finite based $G$-CW complexes over $X$. If any one of
\begin{align*}
 f_*\colon \Mackey H^G_{\gamma+*}(Y) &\to \Mackey H^G_{\gamma+*}(Z) \\
 f_*\colon \MackeyOp\H^G_{\gamma+*}(Y) &\to \MackeyOp\H^G_{\gamma+*}(Z) \\
 f^*\colon \MackeyOp H_G^{\gamma+*}(Z) &\to \MackeyOp H_G^{\gamma+*}(Y) \text{ or}\\
 f^*\colon \Mackey\H_G^{\gamma+*}(Z) &\to \Mackey\H_G^{\gamma+*}(Y)
\end{align*}
is an isomorphism (with $*\in\Z$), then so are the other three and so are all induced maps
in ordinary and dual homology and cohomology in these gradings, with any coefficients.
\qed
\end{corollary}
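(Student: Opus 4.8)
The plan is to exploit the fact that all four of the modules in question are built, via the universal coefficients and Atiyah–Hirzebruch spectral sequences, out of the \emph{same} input, namely the cellular chain complex $\Mackey C^G_{\gamma+*}(Y)$ (or its dual), and that the four displayed maps are all avatars of the single chain map $f_*\colon \Mackey C^G_{\gamma+*}(Y)\to\Mackey C^G_{\gamma+*}(Z)$. Concretely, since $Y$ and $Z$ are finite $G$-CW($\gamma$) complexes, $\Mackey C^G_{\gamma+*}(Y)$ is a bounded chain complex of \emph{finitely generated free} $\stab\Pi_GX$-modules, and by Definition~\ref{def:universalcoeffs} and the remark following it we have $\Mackey H^G_{\gamma+*}(Y)=H_*(\Mackey C^G_{\gamma+*}(Y))$ as $\stab\Pi_GX$-modules. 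The first step is therefore to reduce the statement to a purely homological-algebra assertion: if $g\colon C\to D$ is a map of bounded complexes of finitely generated free modules over a preadditive category $\mathcal A$, then the following are equivalent: (i) $H_*(g)$ is an isomorphism; (ii) $H_*(\Hom_{\mathcal A}(g,-))$ is an isomorphism; (iii) $H_*(g\tensor_{\mathcal A}-)$ is an isomorphism; (iv) $g$ is a chain homotopy equivalence.

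The second step is to prove this algebraic equivalence. The implication (i)$\Rightarrow$(iv) is the standard fact that a quasi-isomorphism between bounded-below complexes of projectives is a chain homotopy equivalence; this applies because finitely generated free $\stab\Pi_GX$-modules are projective and both complexes are bounded. Then (iv) trivially gives (ii) and (iii), since $\Hom_{\mathcal A}(-,-)$ and $-\tensor_{\mathcal A}-$ preserve chain homotopies, and hence in particular each of the four displayed maps is an isomorphism once we know $f_*$ is a homology isomorphism with \emph{free} coefficients. Conversely, for (ii)$\Rightarrow$(i) one takes the mapping cone $\mathrm{Cone}(f_*)$, a bounded complex of finitely generated free modules, and observes that $\Hom_{\mathcal A}(\mathrm{Cone}(f_*),-)$ is acyclic on all free modules; evaluating on the representable modules $\stab\Pi_GX(q,-)$ and using the (enriched) Yoneda lemma shows $\mathrm{Cone}(f_*)$ is contractible, whence $f_*$ is a quasi-isomorphism. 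The cases (iii)$\Rightarrow$(i) and the dual-complex versions (for $\MackeyOp\H$ and $\Mackey\H_G$) are identical, using $\MackeyOp C^G_{\gamma+*}$ in place of $\Mackey C^G_{\gamma+*}$ and the duality between the two chain complexes recorded before Definition~\ref{def:universalcoeffs}.

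The third step propagates the conclusion from the four ``universal'' gradings $\gamma+*$, $*\in\Z$, to all of ordinary and dual homology and cohomology in these gradings with arbitrary coefficients. Having shown $f_*$ (resp.\ its dual) is a chain homotopy equivalence of complexes of free modules, we simply apply $-\tensor_{\stab\Pi_GX}\MackeyOp S$, $\Hom_{\stab\Pi_GX}(-,\Mackey T)$, etc., which preserve chain homotopy equivalences, to read off isomorphisms on $H^G_{\gamma+*}(-;\MackeyOp S)$, $H_G^{\gamma+*}(-;\Mackey T)$, $\H^G_{\gamma+*}(-;\Mackey T)$, and $\H_G^{\gamma+*}(-;\MackeyOp S)$; invoking the suspension isomorphisms (which, for $X$ compact, let us rewrite any $\gamma+*$ as $\delta+*$ for a chosen $\delta$) extends this to the full $RO(\Pi_GX)$-grading in these classes. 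The one point requiring care — the step I expect to be the main obstacle — is the interchange between the ordinary and dual pictures: the hypotheses mix $\Mackey H^G$ (built from $\Mackey C^G_{\gamma+*}$) with $\MackeyOp\H^G$ (built from $\MackeyOp C^G_{\gamma+*}$), so one must use that a finite $G$-CW($\gamma$) complex also has the homotopy type of a finite dual $G$-CW($\delta$) complex, together with the duality isomorphism between the two chain complexes, to see that a homology isomorphism in one model forces a chain homotopy equivalence in \emph{both} models. Once that identification is in hand, everything reduces to the homological algebra of bounded complexes of finitely generated projectives over $\stab\Pi_GX$, and the corollary follows.
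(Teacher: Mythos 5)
Your steps 1–3 — reducing within the ordinary (resp. within the dual) picture to the homological algebra of bounded complexes of finitely generated free $\stab\Pi_GX$-modules, and the equivalence of quasi-isomorphism, chain homotopy equivalence, and vanishing of the mapping cone against all coefficients — are correct and are implicitly the content of the universal coefficients spectral sequence comparison that the paper uses. The genuine gap is precisely the ``one point requiring care'' you flag at the end, and the resolution you offer there does not hold up. There is no ``duality isomorphism between the two chain complexes'': $\Mackey C^G_{\gamma+*}(Y)$ and $\MackeyOp C^G_{\gamma+*}(Y)$ are built from two \emph{different} cell structures on $Y$ (an ordinary $G$-CW($\gamma$) structure and a dual $G$-CW($\gamma$) structure, with differently graded skeleta and cells of different ``shapes'' $G\times_H D(V)$ vs.\ $G\times_H D(V)$ counted with $\Lie(G/H)$), they are modules of opposite variance, and the passage that precedes Definition~\ref{def:universalcoeffs} records an isomorphism of \emph{coefficient systems} associated to a represented theory, not a chain-level duality. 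Knowing that the map of ordinary chain complexes is a chain homotopy equivalence does not, by any elementary manipulation, give you information about the map of dual chain complexes, because ordinary homology in a fixed grading $\gamma+*$ is not a faithful enough invariant to force a stable equivalence over $X$.

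The tool that actually crosses between the two pictures — and the one the lead-in sentence before the corollary points to — is the Atiyah--Hirzebruch spectral sequence. Take $\tilde h^G_*(-)=\tilde\H^G_{\gamma+*}(-;\Mackey T)$ as the $RO(G)$-graded homology theory. The first form of the AHSS for $\tilde h$ has $E^2_{p,q}=\tilde H^G_{\gamma+p}(-;\MackeyOp h^G_{q-\gamma})$, an \emph{ordinary} homology group with some coefficient system, converging to $\tilde\H^G_{\gamma+p+q}(-;\Mackey T)$. Once your steps 1–3 give you that $f_*$ is an isomorphism on $\tilde H^G_{\gamma+*}(-;\MackeyOp S)$ for every coefficient system $\MackeyOp S$, the induced map of spectral sequences is an isomorphism on $E^2$ and hence on the abutment, yielding the isomorphisms on dual homology (and then the dual UCSS gives the other dual flavors). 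The reverse direction is symmetric, using the second AHSS (with dual $E^2$ term) for the theory $\tilde H^G_{\gamma+*}(-;\MackeyOp S)$. Without this (or some equivalent comparison argument), the transfer from the first/third items in the list to the second/fourth is unproven.
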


Finally, we record an algebraic lemma, a slight generalization of
\cite[Lemma 2.3]{Wal:surgery}.
Recall that we say that a $\stab\Pi_G X$-module $\Mackey C$ is free if it is a direct sum
of represented modules, say
\[
 \Mackey C \iso \Dirsum_i \stab\Pi_G X(-,p_i).
\]
We call the indexed collection $\{p_i\}$ a {\em basis} for $\Mackey C$
and say that the basis is {\em $G$-free} if each $p_i\colon G/e\to X$.

\begin{lemma}\label{lem:stablyfree}
Suppose that $\Mackey C_*$ is a finitely-generated
nonnegatively-graded chain complex of free $\stab\Pi_G X$-modules,
and assume that there is an integer $\mu\geq 0$ such that
\begin{enumerate}
\item $H_k(\Mackey C_*) = 0$ for all $k\neq\mu$ and
\item $H^{\mu+1}(\Mackey C_*;\Mackey T) = 0$ for all $\stab\Pi_G X$-modules $\Mackey T$.
\end{enumerate}
Then $H_\mu(\Mackey C_*)$ is a finitely generated stably free $\stab\Pi_G X$-module.
Moreover, if the basis elements for $\Mackey C_*$ are all $G$-free, then
$H_\mu(\Mackey C_*)$ has a $G$-free stable basis.
\end{lemma}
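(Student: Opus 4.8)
The plan is to mimic Wall's argument for \cite[Lemma 2.3]{Wal:surgery}, replacing ``module over $\Z\pi$'' by ``$\stab\Pi_G X$-module'' and using the fact that represented modules $\stab\Pi_G X(-,p)$ are projective (indeed the free modules of the universal coefficients machinery). Write $\Mackey C_* = (\Mackey C_0 \leftarrow \Mackey C_1 \leftarrow \cdots \leftarrow \Mackey C_N)$ with each $\Mackey C_k$ finitely generated free, and let $Z_k = \ker(d_k)$, $B_k = \mathrm{im}(d_{k+1})$. By hypothesis (1), $\Mackey C_*$ is acyclic except in degree $\mu$, so for $k \neq \mu$ we have short exact sequences relating cycles and boundaries; the first step is to run the standard ``syzygy'' dimension-shift below degree $\mu$ and above degree $\mu$ separately. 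Above degree $\mu$: since $\Mackey C_*$ is exact in degrees $> \mu$ and bounded, a descending induction on $k$ from $N$ shows $Z_k$ is finitely generated projective for each $k > \mu$ (at the top, $Z_N = \Mackey C_N$ is free; then $0 \to Z_{k+1} \to \Mackey C_{k+1} \to Z_k \to 0$ is exact for $k \geq \mu$, wait — one must be careful that $d_{k+1}$ surjects onto $Z_k$, which holds precisely because $H_k = 0$ for $k > \mu$), and hence each such sequence splits. Thus $B_\mu = Z_{\mu+1}$-image is finitely generated projective, and in fact $B_\mu \oplus P \iso \Mackey C_{\mu+1} \oplus \Mackey C_{\mu+3} \oplus \cdots$ is free after adding a free complement $P = \Mackey C_{\mu+2}\oplus\Mackey C_{\mu+4}\oplus\cdots$, i.e.\ $B_\mu$ is finitely generated stably free.

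Below degree $\mu$: here I use hypothesis (2). The complex $\Mackey C_*$ in degrees $\leq \mu$ has homology only $H_\mu = Z_\mu/B_\mu$, so $0 \to B_{\mu-1} \to \Mackey C_{\mu-1} \to \cdots \to \Mackey C_0 \to 0$ is a free resolution (of length $\mu-1$) of $\mathrm{coker}(d_1)$... more precisely $Z_0 = \Mackey C_0$, and exactness in degrees $0 < k < \mu$ gives $0 \to B_k \to \Mackey C_k \to B_{k-1} \to 0$. Descending induction would need projectivity of $B_{\mu-1}$ as input. This is exactly where condition (2) enters: $H^{\mu+1}(\Mackey C_*;\Mackey T) = 0$ for all $\Mackey T$ forces the cocycle/coboundary comparison in the $\Hom$-complex at spot $\mu+1$ to be an isomorphism, which (dualizing Wall's argument) says precisely that the surjection $\Mackey C_\mu \to B_{\mu-1}$ splits, equivalently $B_{\mu-1}$ is a direct summand of the free module $\Mackey C_\mu$ — hence finitely generated projective. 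Feeding this into the descending induction along the short exact sequences $0 \to B_k \to \Mackey C_k \to B_{k-1}\to 0$ for $k = \mu-1, \mu-2, \dots, 1$ shows every $B_k$ ($0 \le k \le \mu-1$) is finitely generated projective, each sequence splits, and a telescoping sum as above shows $B_{\mu-1}$ is finitely generated stably free. Now the short exact sequence $0 \to B_\mu \to Z_\mu \to H_\mu(\Mackey C_*) \to 0$ together with $0 \to Z_\mu \to \Mackey C_\mu \to B_{\mu-1}\to 0$ (which splits) gives $Z_\mu \oplus B_{\mu-1} \iso \Mackey C_\mu$, so $Z_\mu$ is finitely generated stably free; and since $B_\mu$ is finitely generated stably free and a summand-up-to-stabilization, a standard Schanuel/cancellation bookkeeping yields that $H_\mu(\Mackey C_*)$ is finitely generated stably free.

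For the last sentence, I would carry the basis data through the argument. If every basis element $p_i \colon G/e \to X$ of $\Mackey C_*$ is $G$-free, then the free complements introduced at each splitting step — which are the $\Mackey C_k$ themselves — all have $G$-free bases, and the isomorphisms exhibiting $H_\mu(\Mackey C_*) \oplus (\text{free, }G\text{-free basis}) \iso (\text{free, }G\text{-free basis})$ constructed above therefore realize a $G$-free stable basis for $H_\mu(\Mackey C_*)$.

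I expect the main obstacle to be Step (below degree $\mu$) — correctly extracting from the hypothesis $H^{\mu+1}(\Mackey C_*;\Mackey T) = 0$ ``for all $\Mackey T$'' the splitting of $\Mackey C_\mu \twoheadrightarrow B_{\mu-1}$. This is the place where Wall uses a clever choice of $\Mackey T$ (essentially $\Mackey T = B_{\mu-1}$, or a represented module mapping onto it) and the naturality of the universal-coefficients/evaluation map; equivariantly one must check this works for $\stab\Pi_G X$-modules, where ``represented'' modules play the role Wall's free modules play, and in particular that $\Hom_{\stab\Pi_G X}(\stab\Pi_G X(-,p),\Mackey T) \iso \Mackey T(p)$ (a Yoneda statement) so that the relevant $\Ext^1$ obstruction is computed correctly. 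The finitely-generated hypothesis is used throughout to keep all the projectives finitely generated and to license the finite telescoping sums; the compact-Lie subtleties about which subgroups $L$ occur (cf.\ \cite[2.6.4]{CW:homologybook}) do not affect this purely homological-algebra argument once ``free = sum of represented modules'' is fixed.
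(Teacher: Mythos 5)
Your proposal runs the two inductions in the wrong directions, and as a result the descending induction above degree $\mu$ does not establish the projectivity of $\Mackey B_\mu$ that you need. Starting from the top, one has $\Mackey Z_N = H_N(\Mackey C_*) = 0$ (not $\Mackey C_N$), so $\Mackey Z_{N-1} \iso \Mackey C_N$ is free; but from there the descending step fails: given that $\Mackey Z_{k+1}$ is projective, the short exact sequence $0 \to \Mackey Z_{k+1} \to \Mackey C_{k+1} \to \Mackey Z_k \to 0$ need not split and $\Mackey Z_k$ need not be projective, since a quotient of a free module by a projective submodule is in general not projective (compare $0 \to \Z \xrightarrow{2} \Z \to \Z/2 \to 0$ over $\Z$). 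So you cannot conclude ``$\Mackey B_\mu$ is finitely generated projective'' from exactness above $\mu$ alone, and without that the top complex $\cdots \to \Mackey C_{\mu+1} \to \Mackey B_\mu \to 0$ is not known to be a contractible complex of projectives, so the telescoping isomorphism does not apply.

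The paper (following Wall) runs the argument the other way. The \emph{ascending} induction below $\mu$, starting from $\Mackey Z_0 = \Mackey C_0$, shows that $\Mackey Z_k$ is projective and each sequence $0 \to \Mackey Z_k \to \Mackey C_k \to \Mackey Z_{k-1} \to 0$ splits for $0 \leq k \leq \mu$, with \emph{no} use of hypothesis (2); in particular $\Mackey B_{\mu-1} = \Mackey Z_{\mu-1}$ is projective and $\Mackey C_\mu \twoheadrightarrow \Mackey B_{\mu-1}$ splits automatically. Hypothesis (2) is used exactly once, and at the other spot: take $\Mackey T = \Mackey B_\mu$, observe that $d_{\mu+1} \colon \Mackey C_{\mu+1} \to \Mackey B_\mu$ is a cocycle, and conclude it is a coboundary, i.e.\ there is $s \colon \Mackey C_\mu \to \Mackey B_\mu$ with $s \circ d_{\mu+1} = d_{\mu+1}$; then $s$ restricts to the identity on $\Mackey B_\mu$, so it retracts $\Mackey B_\mu \hookrightarrow \Mackey Z_\mu$ and $\Mackey B_\mu$ is projective. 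That is, condition (2) splits $\Mackey B_\mu \hookrightarrow \Mackey Z_\mu$, not $\Mackey C_\mu \twoheadrightarrow \Mackey B_{\mu-1}$ as you claim. With that in hand, $\Mackey Z_\mu \iso H_\mu(\Mackey C_*) \oplus \Mackey B_\mu$, both the bottom complex $0 \to \Mackey Z_{\mu-1} \to \Mackey C_{\mu-1} \to \cdots \to \Mackey C_0 \to 0$ and the top complex $\cdots \to \Mackey C_{\mu+1} \to \Mackey B_\mu \to 0$ are contractible complexes of projectives, and the alternating-sum isomorphism for their direct sum yields the stably free conclusion and, by tracking bases, the $G$-free stable basis.
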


\begin{proof}
All but the last sentence of the proof is 
essentially the same as that of \cite[Lemma 2.3]{Wal:surgery},
but we repeat it here for that last statement.

Let $\Mackey Z_k$ and $\Mackey B_k$ be the submodules of $\Mackey C_k$ 
consisting of the cycles and boundaries, respectively.
For $k \leq \mu$ we have short exact sequences
\[
 0 \to \Mackey Z_k \to \Mackey C_k \to \Mackey Z_{k-1} \to 0
\]
and we have $\Mackey Z_0 = \Mackey C_0$. By induction, 
the above short exact sequence splits and $\Mackey Z_k$ is projective
for every $k\leq\mu$.
It follows that $\Mackey C_*$ is chain homotopy equivalent to the complex
\[
 \cdots \to \Mackey C_{\mu+2} \to \Mackey C_{\mu+1} \to \Mackey Z_\mu \to 0
\]
and that the following complex of projective modules has 0 homology, hence is contractible:
\begin{equation}\label{seq:bottomhalf}
 0 \to \Mackey Z_{\mu-1} \to \Mackey C_{\mu-1} \to \Mackey C_{\mu-2} \to \cdots \to \Mackey C_0 \to 0.
\end{equation}
By assumption, the cocycle $\Mackey C_{\mu+1}\to \Mackey B_\mu$ is a coboundary,
which gives a splitting $\Mackey Z_\mu \to \Mackey B_\mu$ to the inclusion,
hence $\Mackey B_\mu$ is also projective.
This gives us that $H_\mu(\Mackey C_*) \dirsum \Mackey B_\mu \iso \Mackey Z_\mu$,
so $H_\mu(\Mackey C_*)$ is projective and finitely generated.
We also get that the following is a chain complex of projective modules with 0 homology,
hence is contractible:
\begin{equation}\label{seq:tophalf}
 \cdots \to \Mackey C_{\mu+2} \to \Mackey C_{\mu+1} \to \Mackey B_\mu \to 0.
\end{equation}
The direct sum of complexes~(\ref{seq:bottomhalf}) and~(\ref{seq:tophalf}) is a contractible
complex of projective modules, hence
\[
 \Dirsum_{i}\Mackey C_{\mu+2i+1} 
   \iso \Mackey B_\mu\dirsum \Mackey Z_{\mu-1}\Dirsum_{i\neq 0}\Mackey C_{\mu+2i}.
\]
Adding $H_\mu(\Mackey C^*)$ to both sides, and using
\[
 H_\mu(\Mackey C^*)\dirsum \Mackey B_\mu\dirsum \Mackey Z_{\mu-1}
  \iso \Mackey Z_\mu\dirsum \Mackey Z_{\mu-1}
  \iso \Mackey C_\mu,
\]
we get
\[
 H_\mu(\Mackey C_*) \dirsum \Dirsum_{i}\Mackey C_{\mu+2i+1}
  \iso \Dirsum_{i}\Mackey C_{\mu+2i}.
\]
This shows that $H_\mu(\Mackey C_*)$ is stably free.
If we assume that $\Mackey C_*$ has a $G$-free basis, then the 
isomorphism above exhibits a stable basis of
$H_\mu(\Mackey C_*)$ that is also $G$-free.
\end{proof}

%%%%%%%%%%%%%%%%%%%%%%%%%%%%%%%%%
\section{Equivariant Poincar\'e Duality Spaces}

The homology and cohomology theories described in the previous section have
cup and cap products, though care has to be taken with the coefficient systems,
as discussed in detail in \cite{CW:homologybook}. 
The special case we need is the following.
Let $\Mackey A_{G/G}$ denote the coefficient system on any $X$ defined by
\[
 \Mackey A_{G/G}(p\colon G/H\to X) = \sorb G(G/H,G/G) \iso A(H).
\]
Then, if $X$ is a $G$-space and $A$ and $B$ are subspaces such that
$(A\union B; A, B)$ is an excisive triad (or, $A$ and $B$ are subcomplexes of $X$),
we have cap products
\[
 -\cap -\colon H_G^{\beta}(X,A;\Mackey T)\tensor \H^G_{\gamma}(X,A\union B;\Mackey A_{G/G})
  \to \H^G_{\beta-\gamma}(X,B;\Mackey T)
\]
and
\[
 -\cap -\colon \H_G^{\beta}(X,A;\MackeyOp S)\tensor \H^G_{\gamma}(X,A\union B;\Mackey A_{G/G})
  \to H^G_{\beta-\gamma}(X,B;\MackeyOp S)
\]
for any coefficient systems $\Mackey T$ and $\MackeyOp S$.

The following is \cite[Theorem 3.8.8]{CW:homologybook}.

\begin{theorem}[Lefschetz Duality]
Let $M$ be a compact smooth $G$-manifold and let $\tau$ be its tangent representation.
Then there exists a {\em fundamental class}
$[M,\bndry M]\in \H^G_\tau(M,\bndry M;\Mackey A_{G/G})$
such that the following maps are all isomorphisms, for every $\gamma$ and
every coefficient system:
\begin{align*}
 -\cap [M,\bndry M]\colon{}& H_G^\gamma(M;\Mackey T) \to \H^G_{\tau-\gamma}(M,\bndry M;\Mackey T) \\
 -\cap [M,\bndry M]\colon{}& H_G^\gamma(M,\bndry M;\Mackey T)\to \H^G_{\tau-\gamma}(M;\Mackey T) \\
 -\cap [M,\bndry M]\colon{}& \H_G^\gamma(M;\MackeyOp S) \to H^G_{\tau-\gamma}(M,\bndry M;\MackeyOp S) \\
 -\cap [M,\bndry M]\colon{}& \H_G^\gamma(M,\bndry M;\MackeyOp S)\to H^G_{\tau-\gamma}(M;\MackeyOp S).
\end{align*}
Moreover, $\bndry[M,\bndry M]\in \H^G_{\tau-1}(\bndry M;\Mackey A_{G/G})$ is
a fundamental class for $\bndry M$, in that the following maps are isomorphisms, for every
$\gamma$ and every coefficient system:
\begin{align*}
 -\cap \bndry[M,\bndry M]\colon{}& H_G^\gamma(\bndry M;\Mackey T) 
     \to \H^G_{\tau-\gamma-1}(\bndry M;\Mackey T) \text{ and}\\
 -\cap \bndry[M,\bndry M]\colon{}& \H_G^\gamma(\bndry M;\MackeyOp S)
     \to H^G_{\tau-\gamma-1}(\bndry M;\MackeyOp S).
\end{align*}
\qed
\end{theorem}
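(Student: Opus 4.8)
The plan is to follow the classical proof of Poincar\'e--Lefschetz duality, adapted to the parametrized equivariant setting over $X = M$: construct a chain-level fundamental cycle, realize cap product with it as a map of chain complexes of $\stab\Pi_G M$-modules, prove that this map is a chain homotopy equivalence by a Mayer--Vietoris induction over an equivariant handle decomposition of $M$, and then deduce all four stated isomorphisms (and all coefficient systems) at once by applying $\Hom_{\stab\Pi_G M}$ and $\tensor_{\stab\Pi_G M}$. For the fundamental class: by \cite{Ill:triangulation}, $M$ has a smooth $G$-triangulation, and, regarding $M$ as a space over itself, the dual of this triangulation is a dual $G$-CW($\tau$) structure, with $\tau$ the tangent representation of $\Pi_G M$. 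I would represent $[M,\bndry M]$ by the sum of the top-dimensional dual cells, each weighted by the canonical unit of $\sorb G(G/H, G/G)\iso A(H)$, i.e.\ by an explicit chain in the $\Mackey A_{G/G}$-coefficient dual cellular chain complex of $(M,\bndry M)$; the coefficient system $\Mackey A_{G/G}$ appears precisely because cap product with classes supported there is defined for arbitrary $\Mackey T$ and $\MackeyOp S$. The substance of this step is checking that the chosen chain is a cycle, which is where the twisting carried by $\tau$ (including possible non-orientability of the fixed sets $M^H$) must be faced; this goes as nonequivariantly after restriction to fixed sets, using the parametrized stable self-duality of orbits from \cite[\S 2.7]{CW:homologybook}.

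Next I would analyze the cap product at the chain level. Cap product with the fundamental cycle carries the cochain dual to a triangulation cell to a unit multiple of the corresponding dual cell, so for a tube $G\times_H D(V)$ over an orbit of $M$, which deformation retracts onto that orbit, cap-with-fundamental-cycle reduces to the dimension axioms together with the stable duality between $S_X^{\gamma(q),q}$ and $S_X^{\gamma(q)-\Lie(G/H),q}$ recorded above; hence the desired chain equivalence holds on a single handle. For the induction, use a handle decomposition and write $M = A\cup B$ with $B$ a (thickened) top handle and $A$ a closed neighborhood deformation retracting onto $M$ with that handle removed, so $A$, $B$, and $A\cap B$ have fewer handles and their fundamental classes restrict compatibly; after refining the triangulation to be compatible with this decomposition, the short exact Mayer--Vietoris sequences of the ordinary and dual cellular chain complexes form a ladder with the cap-with-fundamental-cycle maps, and, the maps for $A$, $B$, and $A\cap B$ being chain equivalences, so is the map for $M$. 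The induction bottoms out at the empty manifold and a single handle.

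For the boundary statement, a collar $\bndry M\times[0,1)\hookrightarrow M$ identifies $\tau|_{\bndry M}$ with $\tau_{\bndry M}+1$, so the connecting homomorphism of $(M,\bndry M)$ sends $[M,\bndry M]$ into $\H^G_{\tau-1}(\bndry M;\Mackey A_{G/G})$, which will be $\bndry[M,\bndry M]$. Comparing the long exact sequences of the pair $(M,\bndry M)$ in $H_G^*$ and in $\H^G_*$, linked by the cap products with $[M,\bndry M]$ and with $\bndry[M,\bndry M]$, and using the duality already proved for $(M,\bndry M)$ and for $M$, the Five Lemma forces the $\bndry M$ cap-product maps to be isomorphisms as well.

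The main obstacle is twofold: (i) showing that the chosen chain is a cycle and, more delicately, that the fundamental classes of the pieces in the handle decomposition restrict compatibly, which requires careful bookkeeping with orientations twisted by $\tau$ over the various fixed sets; and (ii) the cell-level identification of cap product with passage to dual cells in the parametrized category, which is exactly where the stable duality over $X$ of \cite[\S 2.7]{CW:homologybook} does the real work. Granting those, the handle induction and the boundary argument are essentially formal.
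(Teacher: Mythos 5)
The paper does not prove this theorem at all: it is stated with a \qed immediately following, and the sentence just before it attributes it to Theorem~3.8.8 of \cite{CW:homologybook}. So there is no argument in the present paper to compare your proposal against; you are in effect reconstructing a proof of an imported result. Judged on its own, your plan is the standard Poincar\'e--Lefschetz template transported to the parametrized equivariant setting, and it sits well with the infrastructure the paper has already laid out: the dual $G$-CW($\tau$) structure obtained from an Illman $G$-triangulation, the stable parametrized self-duality of orbits over $M$ (i.e.\ that $S_X^{\gamma(q)-\Lie(G/H),q}$ is the dual of $S_X^{\gamma(q),q}$), and the dimension axioms are exactly the tools one would expect to deploy. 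Deducing the isomorphisms for all coefficient systems simultaneously from a chain homotopy equivalence over $\stab\Pi_G M$ is also the right move, and matches how the paper itself argues in later proofs (via comparisons of universal-coefficient and Atiyah--Hirzebruch spectral sequences).

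Two points that you should nail down carefully, since they carry the real content. First, the paper characterizes the fundamental class immediately after the theorem not by an explicit chain but by the requirement that its restriction to $\H^G_\tau(M,M-Gm;\Mackey A_{G/G})\iso A(G_m)$ be a generator for every interior point $m$, and the subsequent Proposition (about restriction to $K\leq G$ and to fixed sets $M^K$) leans on this local characterization. Your ``sum of top-dimensional dual cells with unit $A(H)$ weights'' ought to satisfy it, but this equivalence of characterizations should be established explicitly, not left implicit, because the later results in this paper are proved using the local one. Second, in the Mayer--Vietoris step, the overlap $A\cap B$ deformation retracts onto something like $G\times_H S(V)$, which is not a piece of $M$ with ``fewer handles'' in the naive sense; as in the nonequivariant argument, the induction needs an explicit disk-and-sphere base case (or a reformulation of what exactly the induction is over, e.g.\ number of handles of a manifold-with-corners) rather than bottoming out only at single handles. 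Neither of these is a gap in the sense of a step that would fail, but both are where the actual work lives, and you have correctly flagged the first of them as the main obstacle.
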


Fundamental classes are defined in \cite{CW:homologybook}
by the property that, for each point $m\in M-\bndry M$, the restriction to
\[
 \H^G_\tau(M,M-Gm;\Mackey A_{G/G}) \iso A(G_m)
\]
is a generator. (It is this requirement that forces the fundamental class
to live in dual homology.)
It is also shown there that a fundamental class is characterized by its restrictions to subgroups and to fixed sets,
as in the following.
(These restriction maps are discussed in detail in that source.)

\begin{proposition}
Let $M$ be a compact smooth $G$-manifold and let $\tau$ be the tangent representation
of $\Pi_G M$. Then each of the following is equivalent to
$[M,\bndry M] \in \H^G_\tau(M,\bndry M;\Mackey A_{G/G})$ being a fundamental class:
\begin{enumerate}
\item
For every $K\leq G$, $[M,\bndry M]|K \in \H^K_\tau(M,\bndry M;\Mackey A_{K/K})$
is a fundamental class of $M$ as a $K$-manifold.

\item
For every $K\leq G$, $[M,\bndry M]^K \in \H^{WK}_{\tau^K}(M^K,\bndry M^K;\Mackey A_{WK/WK})$
is a fundamental class of $M^K$ as a $WK$-manifold.

\item
For every $K\leq G$, $[M,\bndry M]^K|e \in H_{\tau^K}(M^K,\bndry M^K;\Z)$
is a fundamental class of $M^K$ as a nonequivariant manifold (where $H_{\tau^K}$ is
twisted homology in dimension $|\tau^K|$ if $M^K$ is not orientable).
\qed
\end{enumerate}
\end{proposition}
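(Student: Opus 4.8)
The plan is to deduce the equivalence of (1), (2), and (3) by setting up a chain of implications, each of which reduces an equivariant statement to a statement about a manifold with a smaller ambient group (a subgroup $K$) or a fixed set $M^K$. The starting observation is that a class $\alpha \in \H^G_\tau(M,\bndry M;\Mackey A_{G/G})$ is a fundamental class precisely when, for every $m \in M - \bndry M$, the image of $\alpha$ under restriction to $\H^G_\tau(M, M-Gm;\Mackey A_{G/G}) \iso A(G_m)$ is a generator (i.e., the image of $1 \in A(G_m)$). So the whole proposition is really a statement about how this local condition at all orbits $Gm$ transforms under the three kinds of restriction map (restriction to a subgroup $K$, passage to $K$-fixed sets, and further restriction of the $WK$-action to the trivial group). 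The key input is that these restriction maps are compatible with the identifications of the local groups: restricting $A(G_m)$ to $K$ gives $A(K \cap G_m)$ appropriately interpreted, passing to fixed sets sends $A(G_m)$ to $A(W_{WK}(G_m))$, and the composite of all the fixed-point and subgroup restrictions down to $e$ lands in $H_0$ of a point, which is $\Z$, sending the generator $1$ to $\pm 1$.

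First I would prove (1) $\Leftrightarrow$ fundamental class. The forward direction is immediate: if $[M,\bndry M]$ restricts to a generator of each $A(G_m)$, then for $K \leq G$ the restriction $[M,\bndry M]|K$ restricts at each $m \in M - \bndry M$ to the image of that generator under $A(G_m) \to A(K_m)$, where $K_m = K \cap G_m$; since $1 \mapsto 1$ under this ring map, $[M,\bndry M]|K$ satisfies the local condition characterizing a $K$-fundamental class. The converse is the substantive direction: one takes $K$ to range over the isotropy groups $G_m$ themselves, notes that a $G_m$-fundamental class of $M$ restricts at $m$ to a generator of $A(G_m)$ (this is the tautological case $K = G_m$, $K_m = G_m$), and observes that the $G$-class and its $G_m$-restriction have the same image in $\H^{G_m}_\tau(M, M - G_m \cdot m; \Mackey A_{G_m/G_m}) \iso A(G_m)$ by naturality of restriction, so the local condition for the $G$-class follows from that for the $G_m$-class. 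Here I would lean on the detailed behavior of the restriction maps from \cite{CW:homologybook}.

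Next I would prove (1) $\Leftrightarrow$ (2) and (2) $\Leftrightarrow$ (3). For (1) $\Leftrightarrow$ (2): the $K$-fixed-point operation on $\H^K_*(-;-)$, applied to the $K$-fundamental class of $M$, produces a class in $\H^{WK}_{\tau^K}(M^K, \bndry M^K; \Mackey A_{WK/WK})$; one checks, again via the local characterization and the compatibility of fixed points with the local restriction maps, that this class is a generator of each $A((WK)_{\bar m})$ for $\bar m \in M^K - \bndry M^K$ if and only if the original $K$-class was a fundamental class — using that the orbit type data of $M$ near $Gm$ with $K \leq G_m$ is recorded faithfully in the $WK$-manifold $M^K$ near $(WK)\bar m$. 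Since this must hold for all $K$, and since $M$ itself is the case $K = e$, (1) and (2) are equivalent. The step (2) $\Leftrightarrow$ (3) is the degenerate case: restricting a $WK$-fundamental class of $M^K$ to the trivial subgroup $e \leq WK$ (using the already-established $e \leq WK$ instance of the equivalence "(1) $\Leftrightarrow$ fundamental class" for the manifold $M^K$, or directly the fact that $\H^{WK}_\ast \to H_\ast$ detects fundamental classes), together with the standard fact that a nonequivariant class is a fundamental class iff it restricts to a generator of $H_{|\tau^K|}(M^K, M^K - x; \Z) \iso \Z$ at each point, gives the equivalence; the twisting by orientation data is exactly what makes $H_{\tau^K}$ the correct (possibly twisted) homology group in which the class lives when $M^K$ is nonorientable.

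The main obstacle I expect is bookkeeping the compatibility of the three restriction operations with the identifications $\H^G_\tau(M, M - Gm; \Mackey A_{G/G}) \iso A(G_m)$ — in particular making sure that "restricts to a generator" is preserved and reflected, not merely mapped to something, under passage to fixed sets (where the ambient group changes from $G$ to $WK$ and the normal data to the fixed set intervenes in identifying the local coefficient group). This is precisely the content that \cite{CW:homologybook} develops in its treatment of restriction maps and fundamental classes, so the proof here should consist largely of invoking those results and tracking the relevant orbit-type and orientation data, rather than reconstructing the machinery.
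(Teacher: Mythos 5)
The paper states this proposition without proof; the \texttt{\textbackslash qed} after item~(3) and the surrounding text indicate that it is quoted from \cite{CW:homologybook}, where the restriction maps and fundamental classes are developed in detail. So there is no in-paper argument to compare against directly. That said, your outline does capture the right overall shape — deduce everything from the local characterization of fundamental classes as generators of $A(G_m)$, and push that characterization through the three kinds of restriction operation — which is certainly in the spirit of the cited source.

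There are, however, two real problems with the way you execute the reductions, both of the same kind: you repeatedly assert pointwise (that is, for a single fixed $K$) equivalences that are not true, and that only become true after quantifying over all $K$. Concretely, you claim that $[M,\bndry M]^K$ being a $WK$-fundamental class of $M^K$ holds \emph{if and only if} $[M,\bndry M]|K$ is a $K$-fundamental class, ``for each $K$.'' The forward implication is a genuine compatibility, but the converse is false: the local condition for a $K$-fundamental class must hold at every $m \in M - \bndry M$, including all $m$ with $K_m$ a proper subgroup of $K$, whereas the $K$-fixed-point class sees only a neighborhood of $M^K$. (Take $K = G = S^1$ acting freely on $M$: then $M^K = \emptyset$ and $[M,\bndry M]^K$ is trivially a fundamental class of the empty set, telling you nothing about whether $[M,\bndry M]$ is a $G$-fundamental class.) The same problem recurs in your (2)~$\Leftrightarrow$~(3): going from the nonequivariant fundamental class of $M^K$ back to the $WK$-fundamental class cannot be done one $K$ at a time, and the parenthetical ``$\H^{WK}_* \to H_*$ detects fundamental classes'' is simply not true — restriction to the trivial group forgets all equivariant information. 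The ``already-established $e \leq WK$ instance'' you invoke likewise doesn't say what you want, since (1)~$\Leftrightarrow$~FC is an equivalence only after quantifying over all subgroups, not for a single one.

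The correct organization is to show that each of (1), (2), (3) is equivalent to $[M,\bndry M]$ being a fundamental class, using the universal quantifier over $K$ essentially. The easy directions are: (1)~$\Rightarrow$~FC by taking $K = G$; (2)~$\Rightarrow$~FC by taking $K = e$ (so $M^e = M$, $We = G$); and FC~$\Rightarrow$~(1),(2),(3) by the compatibility of the restriction and fixed-point maps with the local identifications $\H^G_\tau(M, M - Gm; \Mackey A_{G/G}) \iso A(G_m)$, which is the content delegated to \cite{CW:homologybook}. The remaining nontrivial implication, (3)~$\Rightarrow$~FC, is where one actually needs all the $K$'s together: the element of $A(G_m)$ obtained by localizing at $m$ is characterized by its fixed-point degrees over the subgroups $L \leq G_m$ with $WL$ finite, and those degrees are exactly the data recorded by (3). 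Reorganizing your argument along these lines would close the gaps.
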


With these results in mind, we make the following definition.
We use the notation introduced in Definition~\ref{def:universalcoeffs}.

\begin{definition}
Let $(X,\bndry X)$ be a pair of $G$-spaces and let $\tau$ be a representation of $\Pi_G X$.
We say that $(X,\bndry X)$ is a {\em $G$-Poincar\'e duality pair of dimension $\tau$}
if it is equivalent to a pair of finite $G$-CW($\tau$) complexes
and there exists a class $[X,\bndry X]\in \H^G_\tau(X,\bndry X;\Mackey A_{G/G})$ such that
\begin{align*}
 -\cap [X,\bndry X]\colon{}& \MackeyOp H_G^\gamma(X) \to \MackeyOp\H^G_{\tau-\gamma}(X,\bndry X)
   \text{ and}\\
 -\cap \bndry[X,\bndry X]\colon{}& \MackeyOp H_G^\gamma(\bndry X)
  \to \MackeyOp\H^G_{\tau-\gamma-1}(\bndry X)
\end{align*}
are isomorphisms for every $\gamma$.
\end{definition}

We call $[X,\bndry X]$ a fundamental class for $(X,\bndry X)$.
When $\bndry X = \emptyset$, we call $X$ a $G$-Poincar\'e duality space.
The assumptions in the definition imply the other isomorphisms we would like to have:

\begin{proposition}
If $(X,\bndry X)$ is a $G$-Poincar\'e duality pair of dimension $\tau$ with
fundamental class $[X,\bndry X]$, then the following are isomorphisms
for all coefficient systems:
\begin{align*}
 -\cap [X,\bndry X]\colon{}& H_G^\gamma(X;\Mackey T) \to \H^G_{\tau-\gamma}(X,\bndry X;\Mackey T)\\
 -\cap [X,\bndry X]\colon{}& H_G^\gamma(X,\bndry X;\Mackey T)\to \H^G_{\tau-\gamma}(X;\Mackey T) \\
 -\cap [X,\bndry X]\colon{}& \H_G^\gamma(X;\MackeyOp S) \to H^G_{\tau-\gamma}(X,\bndry X;\MackeyOp S) \\
 -\cap [X,\bndry X]\colon{}& \H_G^\gamma(X,\bndry X;\MackeyOp S)\to H^G_{\tau-\gamma}(X;\MackeyOp S)\\
 -\cap \bndry[X,\bndry X]\colon{}& H_G^\gamma(\bndry X;\Mackey T)
  \to \H^G_{\tau-\gamma-1}(\bndry X;\Mackey T)\\
 -\cap \bndry[X,\bndry X]\colon{}& \H_G^\gamma(\bndry X;\MackeyOp S)
     \to H^G_{\tau-\gamma-1}(\bndry X;\MackeyOp S).
\end{align*}
Moreover, we have
\begin{enumerate}
\item
for every $K\leq G$, $(X,\bndry X)$ is a $K$-Poincar\'e duality space of dimension $\tau$
with fundamental class $[X,\bndry X]|K$;

\item
for every $K\leq G$, $(X^K,\bndry X^K)$ is a $WK$-Poincar\'e duality space of dimension $\tau^K$
with fundamental class $[X,\bndry X]^K$;

\item
for every $K\leq G$, $(X^K,\bndry X^K)$ is a nonequivariant Poincar\'e duality space
of (possibly twisted) dimension $\tau^K$ with fundamental class
$[X,\bndry X]^K|e$.
\end{enumerate}
\end{proposition}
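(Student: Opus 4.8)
The plan is to establish the six displayed cap-product isomorphisms first, working with the $G$-action and arbitrary coefficients, and then to obtain the restriction statements by feeding these into the change-of-groups and fixed-point comparison results for ordinary equivariant (co)homology in \cite{CW:homologybook}; throughout, the argument follows the Proposition above for manifolds and the corresponding result of \cite{CW:spivaknormal}.

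For the displayed isomorphisms, the point is that $-\cap[X,\bndry X]$ is a natural transformation of (dual) homology and cohomology theories on spaces over $X$, compatible with the long exact sequences of pairs and with the skeletal filtrations, and that---via the chain-level cap products of \cite{CW:homologybook}---it is induced by a chain map between bounded complexes of \emph{free} $\stab\Pi_G X$-modules whose (co)homologies are the universal-coefficient functors of Definition~\ref{def:universalcoeffs} (built from $\Mackey C^G_{\tau+*}(X)$ and the dual cellular chain complex of $(X,\bndry X)$), in such a way that applying the appropriate $\Hom$ or tensor functor against a coefficient system recovers the corresponding coefficient cap product. By hypothesis this chain map is a homology isomorphism; since it is a map of bounded complexes of projectives, it is therefore a chain homotopy equivalence, hence remains a homology isomorphism after applying $\Hom_{\stab\Pi_G X}(-,\Mackey T)$ or $-\tensor_{\stab\Pi_G X}\MackeyOp S$ for any coefficients. (This is the cap-product analogue of Corollary~\ref{cor:universalisomorphism}.) That yields the first and third displayed isomorphisms; the same argument applied to $\bndry[X,\bndry X]$, which is a universal-coefficient isomorphism by hypothesis, yields the last two; and the five lemma, applied to the map of long exact sequences of the pair $(X,\bndry X)$ induced (up to the usual sign) by $-\cap[X,\bndry X]$ and $-\cap\bndry[X,\bndry X]$, yields the second and fourth. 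The suspension isomorphisms reduce a general virtual grading $\gamma$ to the case $\gamma=\tau+k$ with $k\in\Z$, which is the hypothesis.

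For the restriction statements, combine the first part with the machinery of \cite{CW:homologybook}. Restricting a finite $G$-CW($\tau$) structure shows $(X,\bndry X)$ is equivalent to a finite pair of $K$-CW($\tau$) complexes, and passing to $K$-fixed points shows $(X^K,\bndry X^K)$ is equivalent to a finite pair of $WK$-CW($\tau^K$) complexes. By the first part applied to the group $K$ (resp.\ to the group $WK$), to prove~(1) (resp.~(2)) it suffices to check that cap product with $[X,\bndry X]|K$ (resp.\ with $[X,\bndry X]^K$) is an isomorphism on the universal-coefficient functors over $\stab\Pi_K X$ (resp.\ over $\stab\Pi_{WK} X^K$); and the change-of-groups (resp.\ fixed-point) comparison results of \cite{CW:homologybook}---which relate those functors to the corresponding ones over $\stab\Pi_G X$ and carry $[X,\bndry X]$ to $[X,\bndry X]|K$ (resp.\ to $[X,\bndry X]^K$) compatibly with cap products---reduce this to the isomorphisms over $G$ already established. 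Statement~(3) is statement~(2) specialized to the trivial subgroup $e\leq WK$ (using that~(1), having been proved for an arbitrary compact Lie group, applies with $G$ replaced by $WK$): over the trivial group the stable fundamental groupoid is the groupoid ring of the ordinary fundamental groupoid of $X^K$, its modules are ordinary local coefficient systems, the dual and non-dual theories coincide, and one recovers classical Poincar\'e duality for $X^K$, with the possible non-orientability of $\tau^K$ carried by its orientation character, so that ``dimension $\tau^K$'' is to be read as twisted Poincar\'e duality in degree $|\tau^K|$.

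The step requiring the most care is the chain-level input in the second paragraph---the cap-product analogue of Corollary~\ref{cor:universalisomorphism}: that cap product with the fundamental class is genuinely modeled by a chain map between bounded complexes of free $\stab\Pi_G X$-modules with the correct (co)homology and the correct behavior under coefficient change. One must also check that the change-of-groups and fixed-point comparison results of \cite{CW:homologybook} are compatible with cap products and carry fundamental classes to the expected restricted and fixed-point classes. With these inputs (all available, or routine to establish, from \cite{CW:homologybook}), the rest is the same five-lemma-and-spectral-sequence bookkeeping as in the nonequivariant case of \cite{Wal:surgery} and the finite-group case of \cite{CW:spivaknormal}.
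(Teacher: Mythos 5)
For the six displayed cap-product isomorphisms, your approach is essentially equivalent to the paper's. Both arguments start from the chain-level cap product, use the hypothesis (isomorphism on universal-coefficient functors), and deduce the isomorphism for all coefficients; the only difference is packaging. You observe directly that a homology isomorphism between bounded complexes of free $\stab\Pi_G X$-modules is a chain homotopy equivalence, hence survives $\Hom(-,\Mackey T)$ and $-\tensor\MackeyOp S$. The paper instead compares the universal-coefficient spectral sequences of the two sides and uses the hypothesis to get an isomorphism on $E^2$. These are two ways of expressing the same homological-algebra fact. Both also use the long exact sequence of the pair (five lemma) to obtain the relative cases, so this part is correct.

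For the three restriction statements, however, your approach diverges from the paper's and has a genuine gap. You want to reduce the isomorphism on universal-coefficient functors over $\stab\Pi_K X$ (resp.\ $\stab\Pi_{WK} X^K$) to the already-established one over $\stab\Pi_G X$ via unspecified ``change-of-groups/fixed-point comparison results.'' But the relationship between $\stab\Pi_K X$-modules and $\stab\Pi_G X$-modules is not a simple equivalence: restricting a $G$-CW($\tau$) structure to $K$ produces strictly more cells (via double-coset decompositions of $G$-orbits), so the $K$-cellular chain complex of $X$ is not the restriction of the $G$-cellular chain complex, and there is no obvious conservative functor between the module categories that would carry the $G$-level isomorphism to the $K$-level one. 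The paper sidesteps this entirely by regarding $H_K^*(-;\Mackey T)$ and $\H^K_*(-;\Mackey T)$, for an arbitrary $\stab\Pi_K X$-module $\Mackey T$, as $G$-equivariant (co)homology theories on $G$-spaces over $X$, so that the Atiyah--Hirzebruch spectral sequence has $E_2$-term in $G$-equivariant (dual) (co)homology of $X$ with coefficients in the associated $\stab\Pi_G X$-module-valued coefficient systems; the cap product then induces a map of spectral sequences whose $E_2$-level map is the already-proved $G$-duality isomorphism (together with the identification of the homology and cohomology coefficient systems coming from the fact that homological duality is preserved on passing from $G$ to $K$). This keeps the comparison entirely inside $\stab\Pi_G X$-modules until the final passage to the abutment, and is both precise and self-contained. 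Your argument would either need to make the change-of-groups compatibility rigorous---which is not a routine citation and would likely end up reproducing something close to the paper's Atiyah--Hirzebruch comparison---or be replaced by that approach outright. The same remark applies to your treatment of (2) via ``fixed-point comparison,'' where the paper again uses the Atiyah--Hirzebruch spectral sequence for the theory $Y\mapsto H^*_{WK}(Y^K;\MackeyOp S)$ regarded as a $G$-theory; and (3) then follows by forgetting the $WK$-action, as you say.
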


\begin{proof}
The isomorphism $H_G^\gamma(X;\Mackey T) \iso \H^G_{\tau-\gamma}(X,\bndry X;\Mackey T)$
for every coefficient system $\Mackey T$ follows from a comparison
of universal coefficient spectral sequences.
Looking at the map $-\cap [X,\bndry X]$ on the chain level, we see that it induces a map
of spectral sequences
\[
 \xymatrix{
   \Tor^{\Pi_G X}_p(\MackeyOp H_G^{\gamma-q}(X),\Mackey T) \ar@{=>}[r] \ar[d]
     & H_G^{\gamma-p-q}(X;\Mackey T) \ar[d]^{-\cap[X,\bndry X]} \\
   \Tor^{\Pi_G X}_p(\MackeyOp \H^G_{\tau-\gamma+q}(X,\bndry X),\Mackey T) \ar@{=>}[r]
     & \H^G_{\tau-\gamma+p+q}(X,\bndry X;\Mackey T)
 }
\]
Examining the construction, we see that the map on the left is, as expected, induced by the map
\[
 -\cap[X,\bndry X]\colon \MackeyOp H_G^{\gamma-q}(X) \to \MackeyOp\H^G_{\tau-\gamma+q}(X,\bndry X)
\]
which is an isomorphism by assumption. Hence, the map on $E^2$ terms is an isomorphism,
and so is the map on the right.

A similar argument establishes the isomorphism 
$H_G^\gamma(\bndry X;\Mackey T) \iso \H^G_{\tau-\gamma-1}(\bndry X;\Mackey T)$
for all coefficient systems, and
the isomorphism $H_G^\gamma(X,\bndry X;\Mackey T)\iso \H^G_{\tau-\gamma}(X;\Mackey T)$
follows from the long exact sequence of the pair $(X,\bndry X)$.

%The isomorphism $\H_G^\gamma(X;\MackeyOp S) \iso H^G_{\tau-\gamma}(X,\bndry X;\MackeyOp S)$
%follows from another comparison of universal coefficients spectral sequences, similar
%to the one above:
%\[
% \xymatrix{
%   \Ext_{\Pi_G X}^p(\MackeyOp \H^G_{\gamma+q}(X),\MackeyOp S) \ar@{=>}[r] \ar[d]
%     & \H_G^{\gamma+p+q}(X;\MackeyOp S) \ar[d]^{-\cap[X,\bndry X]} \\
%   \Ext_{\Pi_G X}^p(\MackeyOp H_G^{\tau-\gamma-q}(X,\bndry X),\MackeyOp S) \ar@{=>}[r]
%     & H^G_{\tau-\gamma-p-q}(X,\bndry X;\MackeyOp S)
% }
%\]
%
Similar universal coefficient
and long exact sequence arguments show that
the remaining cap products induce isomorphisms.

Now let $K\leq G$. To show that $(X,\bndry X)$ is a $K$-Poincar\'e duality pair,
we use a comparison of Atiyah-Hirzebruch spectral sequences.
Consider the cohomology theory $h_G^*$ on $G$-spaces over $X$ defined by
\[
 h_G^*(Y) = H_K^*(Y;\Mackey T)
\]
where $\Mackey T$ is a $\stab\Pi_K X$-module. Similarly, let $k^G_*$ be the homology theory defined by
\[
 k^G_*(Y) = \H^K_*(Y;\Mackey T).
\]
Because homological duality is preserved when passing from $G$ to $K$, we see that we have
an isomorphism $\Mackey h_G^{\gamma+q} \iso \Mackey k^{G,\Lie}_{-\gamma-q}$,
as in the comment after Definition~\ref{def:coeffsystem}.
We can then construct the following comparison of spectral sequences:
\[
 \xymatrix{
  H_G^p(X;\Mackey h_G^{\gamma+q}) \ar@{=>}[r] \ar[d]_\iso
   & H_K^{\gamma+p+q}(X;\Mackey T) \ar[d]^{-\cap[X,\bndry X]} \\
  \H^G_{\tau-p}(X,\bndry X;\Mackey k^{G,\Lie}_{-\gamma-q}) \ar@{=>}[r]
   & \H^K_{\tau-\gamma-p-q}(X,\bndry X;\Mackey T)
 }
\]
The map of $E_2$ terms is induced by the duality isomorphism for $(X,\bndry X)$
as a $G$-Poincar\'e duality pair and the isomorphism of coefficient systems we noted.
Therefore, the map on the right is an isomorphism. Duality for $(\bndry X,\emptyset)$
is then a special case.

That $(X^K,\bndry X^K)$ is a $WK$-Poincar\'e duality pair follows by a similar
argument, using the fact that $Y \mapsto H^*_{WK}(Y^K;\MackeyOp S)$ is a homology
theory on $G$-spaces.
That $(X^K,\bndry X^K)$ is also a nonequivariant Poincar\'e duality pair follows
by combining with the previous case to forget the $WK$-action.
\end{proof}

%%%%%%%%%%%%%%%%%%%%%%%%%%%%%%%%%%%
\section{The Spivak Normal Fibration}

If a Poincar\'e duality space or pair is homotopy equivalent to a manifold, then it will have
a vector bundle over it mapping to the normal bundle of the manifold.
For any Poincar\'e duality pair, we can find a spherical fibration,
the Spivak normal fibration, that is a candidate
to be the sphere bundle of this normal bundle, as we now explain.
We concentrate on the case of a space, but similar arguments apply to pairs.

\begin{definition}\label{def:spivaknormal}
Let $X$ be a finite $G$-CW complex.
Embed $X$ equivariantly in a representation $V$ with regular neigborhood $U$. Replace the projections $p\colon U \to X$ and $q\colon \bndry U \to X$ by a pair of fibrations
$\Gamma p\colon E = \Gamma U \to X$ and $\Gamma q\colon E_0 = \Gamma \bndry U \to X$. The fiberwise quotient, $r \colon E\ParaQuot X E_0 \to X$, is then the {\em Spivak normal fibration}.
\end{definition}

We want to show that, if $X$ is a Poincar\'e duality space
and $V$ is sufficiently large, then the Spivak normal fibration
is spherical.
We will need the following results about compact Lie groups.

\begin{lemma}
Let $H$ and $K$ be subgroups of $G$. Then, as an $N_GH$-space, 
$(G/K)^H$ is a disjoint union of orbits of the form $N_GH/N_{K'}H$ where $K'$ is a
$G$-conjugate of $K$ containing $H$.
\end{lemma}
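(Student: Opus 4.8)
The plan is to analyze the $H$-fixed points of $G/K$ directly, using the standard description of fixed sets of homogeneous spaces. First I would recall the basic fact: for a point $gK \in G/K$, we have $gK \in (G/K)^H$ if and only if $H \subseteq gKg^{-1}$, i.e., $H$ fixes $gK$ precisely when $g^{-1}Hg \subseteq K$. Thus the points of $(G/K)^H$ correspond to cosets $gK$ with $H \subseteq {}^gK := gKg^{-1}$, and each such ${}^gK$ is a $G$-conjugate of $K$ containing $H$. This identifies the underlying set of $(G/K)^H$ and shows immediately why only conjugates $K'$ of $K$ that contain $H$ can appear.

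Next I would compute the $N_GH$-action and isotropy. The group $N_GH$ acts on $(G/K)^H$ by left translation (this is well-defined since $N_GH$ normalizes $H$, hence preserves the fixed set). Given a point $gK \in (G/K)^H$, set $K' = gKg^{-1} \supseteq H$; then $gK = \{x \in G : xK = gK\}$ has the form $gK$, and I would compute its stabilizer in $N_GH$: an element $n \in N_GH$ fixes $gK$ iff $ngK = gK$ iff $g^{-1}ng \in K$ iff $n \in gKg^{-1} = K'$, so the stabilizer is $N_GH \cap K'$. Since $H \subseteq K'$ and $n$ already normalizes $H$, in fact $N_GH \cap K' = N_{K'}H$ (the normalizer of $H$ inside $K'$ — note $H \trianglelefteq N_{K'}H$ since $H \subseteq K'$). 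Therefore the $N_GH$-orbit of $gK$ is $N_GH/N_{K'}H$, exactly as claimed.

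Finally I would assemble these orbits into the disjoint union. The fixed set $(G/K)^H$ decomposes as a disjoint union of $N_GH$-orbits; by the previous paragraph each is of the form $N_GH/N_{K'}H$ with $K'$ a $G$-conjugate of $K$ containing $H$. One small point worth stating clearly: two choices $g_1, g_2$ give the same $N_GH$-orbit iff $g_1 K$ and $g_2 K$ lie in one orbit, and the resulting conjugate subgroups $K'_1 = g_1 K g_1^{-1}$ and $K'_2 = g_2 K g_2^{-1}$ are then $N_GH$-conjugate; so the orbits are indexed by $N_GH$-conjugacy classes of such $K'$, though for the statement as written we need only exhibit each orbit in the stated form, not classify them.

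**The main obstacle** is essentially bookkeeping rather than a deep point: one must be careful about whether $(G/K)^H$ is nonempty (it may be empty, in which case the disjoint union is empty and the statement is vacuous), and one must verify that $(G/K)^H$ is genuinely a closed $N_GH$-submanifold that is a finite disjoint union of such orbits — this uses compactness of $G$ (so $(G/K)^H$ is a compact manifold with finitely many $N_GH$-orbit types, hence, since all isotropy groups here are conjugate within a single orbit, finitely many orbits of the listed homogeneous type). The identification of the stabilizer as $N_{K'}H$ rather than merely $N_GH \cap K'$ is the one spot where the hypothesis $H \subseteq K'$ is genuinely used, and it should be spelled out, but it is elementary.
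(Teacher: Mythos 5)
Your set-theoretic description of $(G/K)^H$ and your stabilizer computation (identifying the $N_GH$-isotropy of $gK$ as $N_GH\cap K' = N_{K'}H$, using $H\subseteq K'$) are correct and coincide with the first half of the paper's argument. However, there is a genuine gap in the last step. The lemma makes a \emph{topological} claim: that $(G/K)^H$ decomposes as a coproduct of $N_GH$-orbits, which requires each orbit to be an open (as well as closed) subset. This is not automatic, and your attempted justification does not supply it. In particular, the inference ``finitely many $N_GH$-orbit types, hence $\ldots$ finitely many orbits'' is not valid: a compact $N_GH$-manifold with a single orbit type can easily have a continuum of orbits (e.g., a trivial action). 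Nor does the parenthetical ``since all isotropy groups here are conjugate within a single orbit'' do any work --- that statement is true of every orbit and carries no information.

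What is actually needed is the fact that each path component of $(G/K)^H$ is a single $N_GH$-orbit, and this is where the paper's argument differs from yours. The paper cites a result of May (\cite[1.1]{May:thomiso}) asserting that if $g_1K$ and $g_2K$ are joined by a path in $(G/K)^H$, then $g_2 = cg_1$ for some $c$ in the centralizer $Z_GH\subseteq N_GH$; thus each component is one orbit, the orbits are open, and (being compact) they tile $(G/K)^H$ as a topological disjoint union. (An alternative you could use is a tangent-space computation: $T_{gK}(G/K)^H \cong (\mathfrak g/\mathfrak k')^H \cong \mathfrak g^H/(\mathfrak k')^H$, and since $\mathfrak g^H = \Lie N_GH$ and $(\mathfrak k')^H = \Lie N_{K'}H$ for compact groups, the orbit $N_GH/N_{K'}H$ is already full-dimensional, hence open.) As written, your proof establishes the set-level statement but not the topological decomposition, and the topological decomposition is exactly what is used in the subsequent Lemma~\ref{lem:admissable} to conclude that $(G/K)^H$ is a finite $N_GH$-space.
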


\begin{proof}
First, if $gK$ is a coset fixed by $H$, then its $N_GH$-isotropy group is
$K'\intersect N_GH = N_{K'}H$, where $K' = gKg^{-1}$ must contain $H$. 
Hence, the orbit of $gK$ has the form $N_GH/N_{K'}H$.

Suppose that $g_1K$ and $g_2K$ are connected by a path in $(G/K)^H$.
By \cite[1.1]{May:thomiso}, $g_2 = c g_1$ for some $c$ in the centralizer of $H$.
Therefore, $g_2K$ is in the $N_GH$-orbit of $g_1K$.
Thus, each component of $(G/K)^H$ is a single orbit and
$(G/K)^H$ is a disjoint union of these orbits.
\end{proof}

\begin{lemma}\label{lem:admissable}
If $V$ is a representation of $G$ and $H$ is a subgroup of $G$ with $|WH|<\infty$, then
there exists an isotropy subgroup $K$ of $V$ containing $H$ such that $V^H = V^K$ and $|WK|<\infty$.
\end{lemma}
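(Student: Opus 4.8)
The plan is to take $K$ to be the isotropy group $G_v$ of a suitably generic point $v\in V^H$ and to read off all three conclusions from the slice theorem, the preceding lemma, and the hypothesis $|WH|<\infty$. Since $V^H$ is a linear subspace of $V$ and $G$, being compact, satisfies the descending chain condition on closed subgroups, I would choose $v\in V^H$ so that $K:=G_v$ is minimal among the isotropy groups $G_w$ for $w\in V^H$. Then $K$ is an isotropy subgroup of $V$, and $K\supseteq H$ since $v\in V^H$.

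The crux is the observation that $(T_v(Gv))^H=0$. As the isotropy representation of the orbit, $T_v(Gv)$ is $H$-isomorphic to $\mathfrak g/\mathfrak k$ with $H$ acting by the adjoint action ($\mathfrak g$ and $\mathfrak k$ being the Lie algebras of $G$ and $K$), and $(\mathfrak g/\mathfrak k)^H$ is the tangent space at the fixed point $eK$ to $(G/K)^H$. By the preceding lemma, the component of $(G/K)^H$ through $eK$ is the single $N_GH$-orbit $N_GH/(N_GH\intersect K)$; since $H\subseteq N_GH\intersect K\subseteq N_GH$, and since $|WH|<\infty$ forces $\dim N_GH=\dim H$, all three groups have the same dimension and this orbit is $0$-dimensional, whence $(T_v(Gv))^H=0$.

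From this, both remaining conclusions follow. For $V^K=V^H$: fixing a $G$-invariant inner product, I would write $V=T_v(Gv)\oplus S$ as $K$-representations, with $S$ the slice at $v$, and take $H$-fixed points to get $V^H=S^H$. By the slice theorem, a point $v+s$ with $s\in S$ near $0$ has $G$-isotropy $K_s$, the $K$-isotropy of $s$; for $s\in S^H$ one has $v+s\in V^H$, so minimality of $K$ forces $K_s=K$, i.e.\ $K$ fixes $s$. Since $K$ acts linearly and $S^H$ is a subspace, $K$ fixes $S^H=V^H$ pointwise, giving $V^H\subseteq V^K$; the reverse inclusion is automatic. (In fact $K$ is then the pointwise stabilizer of $V^H$.) For the implication $|WH|<\infty\Rightarrow|WK|<\infty$: the Lie algebra of $N_GK$ is the normalizer of $\mathfrak k$ in $\mathfrak g$, so $\mathrm{Lie}(WK)=\{X\in\mathfrak g:[X,\mathfrak k]\subseteq\mathfrak k\}/\mathfrak k$; this is a trivial $K$-module because $K$ is the kernel of $N_GK\to WK$, so by complete reducibility it embeds $K$-equivariantly into $(\mathfrak g/\mathfrak k)^K\subseteq(\mathfrak g/\mathfrak k)^H=(T_v(Gv))^H=0$. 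Hence $WK$ is finite.

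I expect the main obstacle to be the bookkeeping in the last step: identifying $\mathrm{Lie}(N_GK)$ with the normalizer of $\mathfrak k$ in $\mathfrak g$ (using that $(N_GK)^0=(N_G(K^0))^0$, since a connected group normalizing $K^0$ permutes the components of $K$ trivially) and verifying that the possibly disconnected group $K$ acts trivially on $\mathrm{Lie}(WK)$, so that it lands in the $K$-fixed — not merely $H$-fixed — part of $\mathfrak g/\mathfrak k$. A secondary point requiring care is that it is minimality of $K$ as an isotropy subgroup, not merely of its dimension, that upgrades $K_s\supseteq K^0$ to $K_s=K$ in the slice argument.
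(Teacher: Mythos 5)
Your proof is correct and rests on the same skeleton as the paper's: take $K$ minimal among the $G$-isotropy groups of points of $V^H$, invoke the preceding lemma to see that $(G/K)^H$ is discrete near $eK$, and apply the slice theorem to get $V^K = V^H$ (your observation $(T_v(Gv))^H = (\mathfrak g/\mathfrak k)^H = 0$ is the infinitesimal form of the paper's statement that $(G/K)^H$ is a finite $N_GH$-space). Where you diverge is in the finiteness of $WK$: the paper dispatches this in one line, without the slice picture, by noting that since $H\leq K$, $\mathfrak g/\mathfrak k$ is an $H$-summand of $\mathfrak g/\mathfrak h$, so $\Lie(WK) = (\mathfrak g/\mathfrak k)^K \subseteq (\mathfrak g/\mathfrak k)^H \subseteq (\mathfrak g/\mathfrak h)^H = \Lie(WH) = 0$, using the standard identity $\Lie(WL) = (\mathfrak g/\mathfrak l)^L$ coming from $(G/L)^L = WL$. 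Your detour through $\Lie(N_GK)$, triviality of the $K$-action, and complete reducibility is correct but reproves that identity the long way. One caution: the side-claim you flagged, $(N_GK)^0 = (N_G(K^0))^0$, is in fact false in general (take $K$ a reflection in $G=O(2)$: then $N_G(K^0)=G$ but $N_GK$ is finite); fortunately your argument never needs it, since all you use is that $K\triangleleft N_GK$ acts trivially by conjugation on $\Lie(N_GK)/\mathfrak k$, not the stronger identification of $\Lie(N_GK)$ with $\mathfrak n_{\mathfrak g}(\mathfrak k)$. Your final worry about upgrading $K_s\supseteq K^0$ to $K_s=K$ is likewise moot: the slice theorem gives $G_{v+s}=K_s\leq K$ directly, and minimality of $K$ (as a subgroup, which is how you chose it) forces equality.
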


\begin{proof}
Consider the set of all $G$-isotropy subgroups of points in $V^H$ 
(which necessarily then all contain $H$) and let $K$ be a minimal subgroup in this collection.
We claim that $|WK|<\infty$ because $H\leq K$: 
Recall that $\Lie(G/H)$ denotes the tangent space at $eH$
of $G/H$, considered as a representation of $H$.
To say that $|WH|<\infty$ is equivalent to saying that $\Lie(G/H)^H = 0$.
Now, if $\Lie(G/K)^K \neq 0$, then $\Lie(G/K)^H\neq 0$. But $\Lie(G/K)$ is isomorphic to an
$H$-subspace of $\Lie(G/H)$, which would imply that $\Lie(G/H)^H \neq 0$.
Therefore, we conclude that $\Lie(G/K)^K = 0$ and that $|WK|<\infty$.

Now let $x\in V^H$ be a point with isotropy $K$. We have that
\[
 Gx \intersect V^H \iso (G/K)^H
\]
as $N_G H$-spaces. Further, by the preceding lemma, $(G/K)^H$ is a disjoint union of
orbits of the form $N_G H/N_{K'}H$, where $K'$ is conjugate to $K$ and still contains $H$.
Because $|WH|<\infty$, each such orbit is also finite, hence $(G/K)^H$ is a finite
$N_G H$-space.

Therefore, $x$ has a neighborhood in $V^H$ that looks like an open subset of a representation
of $K$, hence all the isotropy subgroups in this neighborhood are subgroups of $K$.
However, we chose $K$ to be minimal, hence all the isotropy subgroups in this neighborhood
are equal to $K$. This shows that $V^K$ has the same dimension as $V^H$, hence $V^K = V^H$.
\end{proof}

Now suppose that $X$ is a $G$-Poincar\'e duality space of dimension $\tau$.
If $x\in X$, write $\tau_x$ for the representation of $G_x$ such that
$\tau(x\colon G/G_x\to X) = G\times_{G_x}\tau_x$.

\begin{condition}\label{nicecondition}
We shall assume that $X$ is embedded in a representation $V$ such that 
\begin{enumerate}
\item for each $x \in X$, $V$ contains a copy of the $G_x$-representation $\tau_x$;
\item for each $x \in X$, $|(V-\tau_x)^G| \geq 2$; and
\item for each $x \in X$, $V-\tau_x$ has gaps of at least two in fixed set dimensions,
i.e., if $H<K\leq G_x$ and $(V-\tau_x)^K \neq (V-\tau_x)^H$, then
$|(V-\tau_x)^K| \leq |(V-\tau_x)^H|-2$.
\end{enumerate}
\end{condition}

We now have the following version of a nonequivariant result.
It generalizes \cite[Theorem 2.2]{CW:spivaknormal} to compact Lie group actions. 

\begin{theorem}\label{spivak}
Assume $X$ is a $G$-Poincar\'e duality space of dimension $\tau$ embedded in a representation $V$
so large that it satisfies Condition~\ref{nicecondition}. Then the Spivak normal fibration $r$ is fiber $G$-homotopy equivalent to a spherical $G$-fibration of dimension $V - \tau$.
\end{theorem}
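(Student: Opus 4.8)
The plan is to follow the nonequivariant argument of Spivak, Wall, and Browder, localized and glued together fiber by fiber using the equivariant homology machinery already set up. Recall the nonequivariant characterization: a spherical fibration over a finite complex is detected by the connectivity of its fiber together with the behavior of the Thom class. So the strategy is: (1) show that the Spivak normal fibration $r\colon E\ParaQuot X E_0 \to X$ becomes, after passing to fixed sets and taking restrictions to all subgroups, a spherical fibration in the classical sense; (2) use the equivariant Whitehead theorem for fibrations (or equivalently, detect a fiber $G$-homotopy equivalence via $\Pi_G X$-local homology) to upgrade this to a genuine fiberwise equivalence with a spherical $G$-fibration of the stated dimension $V-\tau$.

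First I would establish the local picture. For a point $x\in X$ with isotropy $G_x$, the regular neighborhood $U$ looks near the $G$-orbit of $x$ like $G\times_{G_x}(\text{cone on something})$, and by Condition~\ref{nicecondition}(1), $V$ contains $\tau_x$; the "normal" directions at $x$ are a $G_x$-representation whose underlying dimension is $|V|-|\tau|$. The goal is to show the fiber of $r$ over $x$, which a priori is just $U/\bndry U$ near $x$ collapsed, has the $G_x$-homotopy type of $S^{W_x}$ for a $G_x$-representation $W_x$ with $|W_x^K| = |(V-\tau_x)^K|$ for all $K\le G_x$. Here is where Lemma~\ref{lem:admissable} enters: given any $H\le G_x$ with $|W_{G_x}H|<\infty$, it supplies an isotropy subgroup $K\supseteq H$ of $V-\tau_x$ with $(V-\tau_x)^H=(V-\tau_x)^K$ and $|W K|<\infty$, which lets us reduce dimension-counting statements about arbitrary fixed sets to those about isotropy subgroups, where the local manifold structure of $X^K$ (it is a Poincar\'e space, hence by the Proposition above $X^K$ is a nonequivariant Poincar\'e space) gives the needed control. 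Concretely, on $K$-fixed sets, $(E\ParaQuot X E_0)^K \to X^K$ is the ordinary Spivak fibration of the Poincar\'e space $X^K$ inside $V^K$, which is classically spherical of dimension $|V^K|-|\tau^K|$, provided $|V^K - \tau^K|\ge 2$ — guaranteed by Condition~\ref{nicecondition}(2) — and provided $X^K$ is simply connected enough or, following Wall, one uses the Poincar\'e duality of $X^K$ directly. So the fibration $r$ is "spherical on each fixed set and each subgroup" with the correct dimension function, namely $V-\tau$.

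The step I expect to be the genuine obstacle is passing from "spherical on every fixed set, compatibly" to a single $G$-fibration fiber $G$-homotopy equivalent to $r$. This is where the gap hypothesis, Condition~\ref{nicecondition}(3), is essential: the fiber $S^{W_x}$ must be reconstructed from its fixed-point data, and obstruction theory for building an equivariant map $S^{W_x}\to F_x$ (the actual fiber) inducing equivalences on all fixed sets requires that the cells being attached across the strata have codimension at least two, so that the relevant obstruction and difference groups (homotopy groups of the spaces of maps, graded by the representation gaps) vanish or are manageable. I would organize this as follows: classify $V-\tau$-spherical $G$-fibrations over $X$ by a classifying space $BF_G(V-\tau)$ (or the appropriate $\Pi_G X$-indexed version), observe that the map $X\to BF_G(V-\tau)$ we want is obstructed by cohomology groups $H_G^{*}(X; \pi_*(\text{fiber of } BF_G(V-\tau)\to \prod_K BF(V^K-\tau^K)))$, and show these obstruction groups vanish under the gap hypothesis by a Blakers--Massey / connectivity argument on the strata. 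Finally, having produced a spherical $G$-fibration $S \to X$ of dimension $V-\tau$ together with a fiberwise map $S\to E\ParaQuot X E_0$, I would invoke Corollary~\ref{cor:universalisomorphism} together with Lefschetz/Poincar\'e duality to see that this map induces isomorphisms on $\Mackey H^G_*$ (the relative Thom isomorphism coming from the Poincar\'e fundamental class $[X]$ identifies both sides with $\Mackey H^G_{*}$ of $X$ shifted by the Thom class), hence on all ordinary homology with all coefficients on each fixed set, hence is a fiber $G$-homotopy equivalence by the equivariant Whitehead theorem. The bookkeeping of representations — keeping $\tau_x$, $W_x$, and the embedding dimension straight across orbits and verifying the dimension function is exactly $V-\tau$ as a representation of $\Pi_G X$, not merely numerically — is the tedious part, but the conceptual crux is the gap-hypothesis-driven vanishing in the obstruction-theoretic step.
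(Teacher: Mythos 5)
Your global plan — show that the Spivak normal fibration is classically spherical on each fixed set with the right dimension, then upgrade to a genuine $G$-spherical fibration — is the paper's plan too, and you correctly locate the hard step as the upgrade. You also correctly identify that Lemma~\ref{lem:admissable} serves to reduce dimension-counting over arbitrary $H$ to isotropy subgroups. But the mechanism you propose for the upgrade is genuinely different from the paper's and is not actually carried out, and that is where the gap lies.

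The paper does not touch classifying spaces $BF_G(V-\tau)$ at all; it works strictly fiber by fiber. Having shown via the nonequivariant Spivak theorem that each fixed-set fiber $F_x^K$ is a homotopy $|V^K-\tau_x^K|$-sphere, the paper invokes a self-contained recognition result (Theorem~\ref{thm:sphere}): one builds a $G_x$-map $f\colon S^{V-\tau_x}\to F_x$ directly by starting from a degree-one map on the bottom fixed set and extending over the cells (obstructions vanish for dimensional reasons), and then adjusts $f$ by induction up the isotropy lattice so that $\deg(f^K)=1$ for all $K$. The key input is tom Dieck's degree theorem (Theorem~8.4.1 of \cite{tD:reptheory}): the degree $\deg(f^K)$ is determined modulo $|WK|$ by the degrees on larger fixed sets, and any residue-class representative is realizable. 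Comparing the image of $[f]$ in $\prod_K\Z$ with the image of $1\in A(G_x)$ and using that the image of $\pi^{G_x}_{V-\tau_x}F_x$ lands inside the image of the Burnside ring (this is where the Thom class $t$ supplied by Lefschetz duality of $(U,\partial U)$ enters) forces $\deg(f^K)\equiv 1\pmod{|WK|}$ and hence a $G_x$-homotopy equivalence. The gap hypothesis of Condition~\ref{nicecondition}(3) is used precisely to satisfy the hypotheses of tom Dieck's theorem, not for a Blakers--Massey vanishing argument.

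The gap in your proposal is in the sentence beginning ``observe that the map $X\to BF_G(V-\tau)$ we want is obstructed by cohomology groups \dots and show these obstruction groups vanish under the gap hypothesis.'' Nothing in the paper, and nothing in your sketch, gives a classifying space for $\Pi_G X$-representation-graded spherical $G$-fibrations, identifies the relevant obstruction groups, or proves they vanish; this would be substantial extra machinery, and the vanishing is not at all obvious (the relevant homotopy groups of $BF_G$ are not zero in the range you would need — the point of the degree-theoretic argument is that the \emph{particular} element, being the image of a Burnside-ring element, can be normalized, not that the ambient groups vanish). A secondary issue: at the very end you want to use Corollary~\ref{cor:universalisomorphism} and a Thom isomorphism to conclude a fiberwise map is a fiber homotopy equivalence, but that corollary is about maps of $G$-spaces over $X$, and upgrading a Thom-space-level equivalence to a fiberwise equivalence would itself require an argument (for instance the uniqueness statement the paper proves separately in the following Proposition). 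The paper avoids this entirely by proving the fiber $F_x\simeq_{G_x}S^{V-\tau_x}$ directly, after which $r$ itself is the sought spherical fibration.
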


\begin{proof}
We begin by constructing a candidate $t$ for the Thom class of $r$.
Let $U$ be a regular neighborhood of $X$ as in Definition~\ref{def:spivaknormal}.
 Since $(U,\bndry U)$ is a $G$-manifold of dimension $V$, it has a fundamental class 
 \[
  [U,\bndry U] \in \H^G_V(U,\bndry U;\Mackey A_{G/G})
\]
Take $t \in H_G^{V-\tau}(E,E_0;\Mackey A_{G/G}) \iso H_G^{V-\tau}(U,\bndry U;\Mackey A_{G/G})$ 
to be the Poincar\'e dual of the fundamental class 
$[X] \in \H^G_\tau(X;\Mackey A_{G/G}) \iso \H^G_\tau(U;\Mackey A_{G/G})$, i.e.,
\[
 t \cap [U,\bndry U] = [X].
\]
If $K\leq G$ we now take $K$ fixed-sets throughout and appeal to the nonequivariant
result (given, for example, in \cite[Theorem 9.31]{Ran:surgery}),
to show that the fixed sets of the Spivak normal fibration are each
nonequivariantly equivalent to spherical fibrations, and in particular that
\[
-\cup t^K|_{F_x} \colon H^*(x;\Z) \to \tilde H^{|V^K - \tau^K|+*}(F_x^K;\Z)
\] 
is an isomorphism for each $x \in X^K$, where 
 \[
 F_x = r^{-1}(x) = \Gamma p^{-1}(x)/\Gamma q^{-1}(x). 
 \]
This shows that $F_x^K$ is a homotopy $|V^K-\tau^K|$-sphere for each $x \in X$ and each $K \subset G_x$. 

The result now follows from the following theorem, which shows that $F_x$
is $G_x$-homotopy equivalent to $V-\tau_x$ for each $x$.
\end{proof}

\begin{theorem}\label{thm:sphere}
Let $W$ be a representation of $G$ with $|W^G|\geq 2$ and with gaps of at least two in fixed set dimensions. Assume that $F$ is a $G$-space with each $F^K \hmtpc S^{W^K}$ and that $t$ is a class in $\tilde H_G^W(F; \Mackey A_{G/G})$ such that $t^K \in \tilde H^{W^K}(F^K; \Z)$ is a generator for all $K$. Then $F \hmtpc_G S^W$.
\end{theorem}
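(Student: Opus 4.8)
The plan is to build a $G$-map $f\colon S^W \to F$ inducing an isomorphism on all fixed-set homology, and then invoke the equivariant Whitehead theorem together with the fact that fixed-set homotopy equivalences between spaces of the homotopy type of $G$-CW complexes are $G$-homotopy equivalences. The class $t\in\tilde H_G^W(F;\Mackey A_{G/G})$ is, by the representability of cohomology, essentially a stable $G$-map $F\to S^W$; unstably, since $|W^G|\ge 2$ and $F^K\hmtpc S^{W^K}$ for all $K$, a connectivity/obstruction argument will let us realize a representative by an honest $G$-map $g\colon F\to S^W$. What we actually want is a map the other way, so instead I would think of $t$ as giving, by Spanier--Whitehead duality, the data of a map out of $S^W$. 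More directly: pick a $G$-CW approximation $S^W \simeq_G Z$ and construct $f\colon Z\to F$ cell by cell, using at each stage that the relevant obstruction groups $\pi_n(F^K, f(\partial e))$ agree with $\pi_n(S^{W^K})$ in the range dictated by the gap hypothesis, so that the cells of $Z$ (which are modeled on the fixed-point structure of $S^W$, hence of $F$) can be mapped in compatibly.

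First I would set up the induction over the isotropy groups of $W$, ordered so that larger fixed-set dimension (equivalently, in the gap situation, ``deeper'' isotropy) comes first. At the bottom stage $K = G$: both $F^G$ and $(S^W)^G = S^{W^G}$ are homotopy spheres of dimension $|W^G|\ge 2\ge 1$, and $t^G$ is a generator of $\tilde H^{W^G}(F^G;\Z)$, so a degree-one map $S^{W^G}\to F^G$ exists and, being a homology isomorphism between simply-connected (or at least nilpotent, since $\pi_1 S^{W^G}$ is trivial for $|W^G|\ge 2$) spaces, is a homotopy equivalence. Inductively, suppose $f$ has been constructed and is a homotopy equivalence on $F^L$ for all $L$ properly containing $K$; I would extend over the cells with isotropy exactly $K$. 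The gap-of-at-least-two hypothesis guarantees that $|W^K| - |W^L| \ge 2$ whenever $W^K\ne W^L$, which keeps $S^{W^K}$ sufficiently highly connected relative to the already-handled ``singular'' part $\bigcup_{L\supsetneq K} S^{W^L}$ inside it; combined with $F^K\hmtpc S^{W^K}$ and the hypothesis $|W^G|\ge 2$ (so that everything in sight is simply connected), the obstructions to extending and to the extension being a homology iso all live in groups that vanish or are controlled by the generator condition on $t^K$.

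The key computational input, which I would isolate as the heart of the argument, is that the pair $(F^K, \bigcup_{L\supsetneq K}F^L)$ is $(|W^K|-1)$-connected and that $H_{|W^K|}$ of $F^K$ rel the singular part is $\Z$, generated by a class hitting $t^K$ under the duality pairing — this is exactly what lets the top cell of $S^{W^K}$ map across by a degree-one map rel the boundary. For this I would use the cofiber sequence relating $F^K$ to its singular part, the inductive hypothesis identifying the singular part with that of $S^{W^K}$, and the long exact sequence in cohomology together with the naturality of $t$ under restriction to subgroups (i.e., $(t|_L)$ corresponds to $t^L$).

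The main obstacle I anticipate is the bookkeeping of the equivariant obstruction theory in the compact Lie setting: with infinitely many conjugacy classes of subgroups a priori relevant, I must first cut down to the finitely many isotropy groups of $W$ actually occurring and verify that a $G$-CW structure on $S^W$ only involves those, and then check that the Bredon-style obstruction groups for extending a $G$-map from a $G$-subcomplex are exactly the nonequivariant obstruction groups of the fixed sets in the appropriate Weyl-group-equivariant sense — and that these vanish thanks to the gap hypothesis. Once that framework is in place, each inductive step is the routine nonequivariant ``map of degree one between simply connected homology spheres is an equivalence'' argument, but assembling it equivariantly, and in particular getting the connectivity estimates to line up with the ``gaps of at least two'' condition rather than merely ``gaps of at least one,'' is where the care is required.
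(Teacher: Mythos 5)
Your plan to build a $G$-map $S^W \to F$ and show it's a fixed-set homology isomorphism is the right overall strategy, and the opening step (a degree one map on $G$-fixed points, extended equivariantly by easy obstruction theory) matches the paper. However, there is a genuine gap at the heart of your inductive step, and it is precisely where the compact Lie setting bites.

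You say that when extending over cells with isotropy $K$ the ``obstructions to extending and to the extension being a homology iso all live in groups that vanish or are controlled by the generator condition on $t^K$,'' and that ``the top cell of $S^{W^K}$ map[s] across by a degree-one map rel the boundary.'' But changing the extension over the free $WK$-cells of isotropy exactly $K$ does not allow you to realize an arbitrary degree for $f^K$: it changes $\deg(f^K)$ only by multiples of $|WK|$. This is exactly tom Dieck's result (\cite[Theorem 8.4.1]{tD:reptgroups} in the paper's proof) identifying $\pi^G_W F$ with a subgroup of $\prod_{(K),\,|WK|<\infty}\Z$ cut out by congruences. So you must separately argue that $\deg(f^K) \equiv 1 \pmod{|WK|}$ once the degrees at strictly larger subgroups are $1$; this does not come from the generator hypothesis on $t^K$ alone or from connectivity estimates. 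The paper supplies this missing step by observing that the class $t$ induces a map $\pi^G_W F \to A(G)$ compatible with fixed-set degrees, so that $[f]$ and $1 \in A(G)$ agree at all $L > K$, and the Burnside ring congruences then force the needed residue condition. Your proposal has no mechanism to produce this congruence.

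A second, related omission: your induction runs over ``isotropy groups of $W$,'' but in the compact Lie case some subgroups relevant to the conclusion $F \simeq_G S^W$ have infinite Weyl group, and cell-by-cell control of $\deg(f^K)$ only makes sense when $WK$ is finite. The paper first arranges $\deg(f^K)=1$ for all $K$ with $WK$ finite, then uses $f^*(t) = 1 \in A(G)$ (which is determined by those finite-Weyl-group degrees) together with naturality of restriction to fixed sets to conclude $\deg(f^K)=1$ for \emph{every} subgroup $K$. Your proposal does not address this case at all, and an obstruction-theoretic induction over all isotropy groups would not obviously close up without that Burnside ring argument. So while your high-level outline points in the right direction, the degree-adjustment step and the infinite-Weyl-group case both need the $A(G)$ argument that the paper supplies, and it is missing from your proposal.
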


\begin{proof}
To start, take a degree one map $S^{W^G}\to F^G$ and extend to a $G$-map
$f\colon S^W\to F$; the obstructions to finding such an extension vanish for dimensional reasons.
By induction up fixed sets, we shall now adjust the degree of $f^K$ to be 1 for all isotropy subgroups $K$ of $W$ for which $WK$ is finite. 

Let $K(W)$ be a $G$-space representing $\tilde H_G^W(-;\Mackey A_{G/G})$ (i.e., the $W$th space of the $G$-spectrum representing $RO(G)$-graded ordinary cohomology \cite{LMM:roghomology}). Restriction to fixed sets in cohomology is represented by a (nonequivariant) map $K(W)^K\to K(\Z;W^K)$. Consider the following diagram.
\[
\xymatrix{
   & A(G) \ar@2{-}[d] \\
  {\pi^G_WF} \ar[r]^-{t_*} \ar@{>->}[d] & {\pi^G_WK(W)} \ar[d]\\
  {\prod_{(K)}\pi_{W^K} F^K} \ar[r]^-{\prod t^K_*} \ar[dr]_{\iso} & {\prod_{(K)} \pi_{W^K}K(W)^K} \ar[d]\\
& {\prod_{(K)}\pi_{W^K}K(\Z,W^K)} \ar@2{-}[r] & {\prod_{(K)} \Z}
  }
\]  
Here, the products are taken over the set of conjugacy classes of subgroups $K$ with $WK$ finite. The fact that the arrow on the left is a monomorphism follows by tom Dieck \cite[Theorem 8.4.1]{tD:reptheory} (which requires the gaps assumed in the statement of our theorem), and similarly for the vertical composite on the right of the diagram. In particular, the image of $\pi^G_W F$ in $\prod_{(K)}\Z$ is a subgroup of the image of $A(G)$, the Burnside ring of $G$. 

For the inductive step, let $K$ be an isotropy subgroup of $S^W$ with $WK$ finite, and assume that we have a $G$-homotopy class $[f] \in \pi^G_W(F)$ with the property that $\deg(f^J) = 1$ for all isotropy subgroups $J > K$ of $S^W$. 
If $L > K$ is not an isotropy group of $S^W$ but has finite Weyl group $WL$, then, 
%since $W$ is admissable, 
by Lemma~\ref{lem:admissable},
$S^{W^L} = S^{W^J}$ for some isotropy subgroup $J > K$ with $WJ$ finite, and so
$\deg (f^L) = 1$.
Since $K$ is an isotropy subgroup of $S^W$, tom Dieck's Theorem~8.4.1 says that $\deg (f^K)$ is completely determined mod $|WK|$ by the numbers $\deg(f^L)$, $L > K$ with $WL$ finite. Further, we can adjust $f$ to realize any degree within this residue class, without affecting the degrees on proper fixed subsets. On the other hand, the images of the two elements $[f]$  and $1 \in A(G)$ in $\prod_{(K)}\pi_{W^K}K(\Z,W^K)$ agree for all $L > K$, and so $\deg(f^K) \congr 1 \mod |WK|$ because the image of $\pi^G_W F$ is a subset of the image of $A(G)$ in $\prod_{(K)}\pi_{W^K}K(\Z,W^K)$. It now follows that we can adjust $f$ so that its degree on $S^{W^K}$ is 1.

This induction leaves us with a map $f\colon S^W \to F$ such that $\deg(f^K) = 1$ for every subgroup $K$ of $G$ with $WK$ finite. To show that $f$ is a $G$-homotopy equivalence, consider the following diagram.
\[
\xymatrix{
 {\tilde H^W_GF} \ar[r]^{f^*} \ar[d] & {\tilde H^W_GS^W} \ar[d] \\
 {\tilde H^{W^K}F^K} \ar[r]^{(f^{K})^*}  & {\tilde H^{W^K}S^{W^K}}
}
\]
We have $f^*(t) = t_*f = 1 \in A(G)$, where $t_*$ is shown in the first diagram in this proof, because elements of $A(G)$ are determined by their fixed set degrees of subgroups $K$ with $WK$ finite. It follows that $(f^K)^*(t^K) = 1$,  and hence $\deg(f^K) = 1$, for every subgroup $K$ of $G$. Thus, $f$ is a $G$-homotopy equivalence.
\end{proof}

We also have the following uniqueness result, which follows by the formal proof in \cite{Br:surgery}. The embedding of $X$ in $V$ determines a collapse map $S^V \to U/\bndry U \hmtpc E/E_0$.
The resulting class $\alpha$ in $[S^V,E/E_0]_G$ satisfies $t\cap \alpha_*[S^V] = [X]$. 

\begin{proposition}
Let $\xi$ be a $(V-\tau)$-dimensional spherical fibration over the G-Poincar\'e duality space $X$, and let $\beta$ be a class in $\pi_V^G(T\xi)$ satisfying $t_\xi \cap \beta_*[S^V] = [X]$. Then there is a fiber homotopy equivalence $b: \xi \to r$, unique up to fiber homotopy, such that $Tb_*(\beta) = \alpha.$
\qed
\end{proposition}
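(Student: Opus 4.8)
The plan is to follow the classical Spivak uniqueness argument (as in \cite{Br:surgery}) essentially verbatim, using the equivariant machinery we have set up in place of its nonequivariant counterparts. We are given a $(V-\tau)$-dimensional spherical $G$-fibration $\xi$ over $X$ with Thom class $t_\xi \in H_G^{V-\tau}(T\xi, \cdot \, ; \Mackey A_{G/G})$ and a class $\beta \in \pi_V^G(T\xi)$ with $t_\xi \cap \beta_*[S^V] = [X]$. First I would recall the Thom isomorphism for equivariant spherical fibrations with a chosen Thom class: capping with $\beta_*[S^V]$, together with capping with $t_\xi$, realizes $T\xi$ (stably) as a kind of $S$-dual of $X_+$ over the point, exactly as in the manifold case where $U/\bndry U$ is $S$-dual to $X_+$. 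The key point is that the collapse class $\alpha \in [S^V, E/E_0]_G$ for the Spivak fibration $r$ enjoys the same property, $t \cap \alpha_*[S^V] = [X]$, as stated just before the proposition.

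The main step is to produce the fiber homotopy equivalence $b\colon \xi \to r$. Following Browder, one constructs $Tb$ first as a stable map and then promotes it. Concretely: the data $(\beta, t_\xi)$ and $(\alpha, t)$ each exhibit $T\xi$ and $E/E_0$ as Thom spaces realizing the $S$-dual of $X_+$ with the \emph{same} underlying spherical fibration data in cohomology (both Thom classes restrict, on each fixed set $F_x^K$, to a generator, and both satisfy the same cap-product normalization against $[X]$). I would use Corollary~\ref{cor:universalisomorphism} together with Lefschetz/Poincar\'e duality for $X$ to see that the map $T\xi \to E/E_0$ one wants to construct induces an isomorphism on all of $\Mackey H^G_*$, $\MackeyOp H_G^*$, etc.; then the equivariant Whitehead theorem (valid for the cell complexes of Section~1, as cited from \cite{CW:homologybook}) upgrades a homology isomorphism over $X$ to a fiber homotopy equivalence, once we know the map exists fiberwise. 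Existence of the fiberwise map is an obstruction-theory argument: the fibers $F_x$ and the fibers of $\xi$ are both $G_x$-homotopy $(V-\tau_x)$-spheres (by Theorem~\ref{thm:sphere} for $r$, by hypothesis for $\xi$), and the gap hypotheses in Condition~\ref{nicecondition} kill the obstructions to building a fiberwise $G$-map inducing a degree-one map on each fixed set, exactly as in the proof of Theorem~\ref{thm:sphere}. The normalization $Tb_*(\beta) = \alpha$ is then arranged by choosing the degree-one fiberwise map compatibly with the given class $\beta$; the cap-product conditions on $\beta$ and $\alpha$ guarantee this choice is consistent.

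For uniqueness, suppose $b, b'\colon \xi \to r$ are two fiber homotopy equivalences with $Tb_*(\beta) = \alpha = Tb'_*(\beta)$. Then $b' \circ b^{-1}\colon r \to r$ is a fiber $G$-self-equivalence fixing the class $\alpha \in \pi_V^G(E/E_0)$ under $T(-)_*$. The set of fiber homotopy classes of such self-equivalences is controlled by the equivariant homotopy of the structure ``group'' of $(V-\tau)$-dimensional spherical $G$-fibrations, i.e. by $[X, \mathrm{something}]_G$ with coefficients in the fixed-set degree-one self-maps of $S^{V-\tau}$; the condition $Tb'_*(\beta) = Tb_*(\beta)$ forces the corresponding cohomology class to vanish, again by the gap hypotheses and a fixed-set induction identical in structure to the one in Theorem~\ref{thm:sphere}. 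Hence $b' \simeq b$ over $X$.

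The hard part will be the fiberwise-existence step: one must check that the obstruction groups to constructing the fiberwise $G$-map (and to the uniqueness induction) genuinely vanish under precisely Condition~\ref{nicecondition}'s gap-of-two hypothesis, which is the same calculation underlying Theorem~\ref{thm:sphere} but now carried out in a family over $X$ and with the normalization against $\beta$ tracked throughout. Since this is exactly the content of Browder's formal argument once the equivariant duality and Whitehead theorems are in hand, we do not reproduce the details.
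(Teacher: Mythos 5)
The paper gives no argument of its own here --- it simply cites Browder's formal proof --- so your sketch is consistent with the paper, and it names the right ingredients: equivariant $S$-duality, fiberwise obstruction theory using the homotopy spheres supplied by Theorem~\ref{thm:sphere}, and Condition~\ref{nicecondition}. However, the roles of those ingredients are muddled in a way worth untangling. In Browder's formal argument, the $S$-duality step by itself produces the stable map $Tb\colon T\xi\to E/E_0$ with $Tb_*(\beta)=\alpha$ and $Tb^*t=t_\xi$: the hypotheses say that both $(T\xi,\beta,t_\xi)$ and $(E/E_0,\alpha,t)$ exhibit $S$-duals of $X_+$, and dualizing the identity of $X_+$ gives $Tb$ with the normalization built in; one does not need to separately check that $Tb$ is a homology isomorphism, and the Whitehead theorem is not what converts $Tb$ into a fiber homotopy equivalence (Whitehead would only give a $G$-equivalence of Thom spaces, which is weaker). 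The genuine content --- and the place where the gap hypotheses belong --- is the equivariant analogue of Browder's lemma that a map of Thom spaces of spherical fibrations respecting Thom classes is, after sufficient suspension, induced by a fiber homotopy equivalence; that is where the fiberwise obstruction theory you describe does its work, and the last step from a fiberwise $G$-map of degree one on each fixed set to a fiber homotopy equivalence is Dold's theorem (checked fixed-set by fixed-set), not Whitehead. With those reassignments your proposal tracks the intended formal argument.
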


Similar results for a Poincar\'e duality pair $(X,\bndry X)$ can be proved in a like manner.

The first obstruction to a Poincar\'e duality space or pair being $G$-homotopy equivalent to
a manifold is then the obstruction to the Spivak normal fibration being stably spherically
equivalent to a vector bundle. We shall not look at this linearity obstruction further, but
assume for the remainder that it vanishes.

So, now suppose that we have a $(V-\tau)$-dimensional $G$-vector bundle $\xi$ over $X$
and a spherical equivalence $r\to S^{\xi}$. 
As above, this leads to a collapse map $c \colon S^V \to T\xi$.
To proceed further we need the following condition (which is stated in a form that will
be useful for several purposes).

\begin{definition}\label{def:ideal}
Let $\Irr(G)$ denote the set of irreducible representations of $G$.
For $A\in\Irr(G)$, let $\DA_A = \End_G(A)$, the division algebra of $G$-endomorphisms
of $A$, so $\DA_A$ is either $\Real$, $\Cplx$, or $\Qtrn$.
Let $d_A = \dim_\Real \DA_A$, so $d_A$ is either 1, 2, or 4.
If $V$ is any representation of $G$ and $A\in\Irr(G)$, let
\[
 \innprod A V G = \dim_{\DA_A}\Hom_G(A,V),
\]
so the decomposition of $V$ into irreducibles is
\[
 V \iso \Dirsum_{A\in\Irr(G)} A^{\innprod A V G}.
\]
\begin{enumerate}
\item
A representation $V$ of $G$ is said to be {\em ideal} if, for every $K\leq G$, we have
\[
 \innprod \Real V K - \innprod{\Real}{\Lie(G/K)}{K} + 1 
     \leq d_A(\innprod{A}{V}{K} - \innprod{A}{\Lie(G/K)}{K} + 1)
\]
for every $A\in\Irr(K)$ such that $\innprod{A}{V}{K}\neq 0$.

\item
A representation $V$ of $G$ is said to be {\em strongly ideal} if, for every $K\leq G$, we have
\[
 \innprod \Real V K + 1 < d_A(\innprod{A}{V}{K} - \innprod{A}{\Lie(G/K)}{K} + 1)
\]
for every $A\in\Irr(K)$ such that $\innprod{A}{V}{K}\neq 0$.

\item
We say that a representation $\tau$ of $\Pi_G X$ is (strongly) ideal if, for every
$p\colon G/K\to X$, if $\tau(p) = G\times_K \tau_p$, then
$\tau_p$ is (strongly) ideal as a representation of $K$.

\item
We say that a manifold or, more generally, a Poincar\'e duality space, is (strongly) ideal
if the associated (tangent) representation $\tau$ is (strongly) ideal.
\end{enumerate}
\end{definition}

With this definition in hand, let us return to the collapse map $c\colon S^V\to T\xi$ above.
If we suppose that $X$, hence $\tau$, is strongly ideal, then
it follows that the conditions specified in \cite[II.4.13]{Pe:pseudo} are satisfied
and the obstructions to making $c$ transverse to the zero section vanish.
Make $c$ so transverse and let $M = c^{-1}(X)$. We then have a a map $f \colon M \to X$ covered by a map $b \colon \nu \to \xi$, where $\nu$ is the normal bundle to $M$ in $V$.
Thus, we have a trivialization $t\colon TM\dirsum f^*\xi \iso V$. 
Further, as in \cite[\S 3]{CW:spivaknormal} it follows that $f_*[M] = [X]$.
Thus, $(f,\xi,t)$ is a ``degree one normal map'' as in the following definitions.

\begin{definition}\label{def:degreeonemap}
Let $(X,\bndry X)$ and $(Y,\bndry Y)$ be Poincar\'e duality pairs of the same dimension.
A {\em degree one map} $f\colon Y\to X$ is a $G$-map such that
$f_*[Y,\bndry Y] = [X,\bndry X]$.
\end{definition}

\begin{definition}\label{def:normalmap}
Let $M$ be a compact $G$-manifold and $X$ a $G$-space.
A {\em normal map} from $M$ to $X$ is a triple $(f,\xi,t)$ where
$f\colon M\to X$ is a $G$-map, $\xi$ is a vector bundle over $X$,
and $t\colon TM\dirsum f^*\xi \iso U$ is a trivialization, where $U$
is some representation of $G$.
We consider two normal maps equivalent if they differ only by stabilization
of $\xi$ and $t$.
\end{definition}

In sum, every $G$-Poincar\'e duality pair has a spherical Spivak normal fibration, but
there is a possible obstruction to this fibration being linear.
Assuming that this obstruction vanishes,
and assuming that the pair is strongly ideal as above, there is
a degree one normal map from a compact $G$-manifold to the pair.
This is then the starting point for $G$-surgery proper.

%%%%%%%%%%%%%%%%%%%%%%%%%%%%%%%%%%
\section{Homotopy Notions}

In discussing surgery, particularly below the critical dimension, we need to know
what its effect is on homotopy groups. In this section we set up the necessary
machinery and show the effect of surgery in lower dimensions.

The homotopy groups we're concerned about are the nonequivariant homotopy groups of
the components of the fixed sets. As nonequivariantly, we need to consider them as
acted upon by the fundamental group. Equivariantly, it's most convenient to capture that action
by viewing the homotopy groups
as functors on the fundamental groupoid, as follows.

\begin{definition}
If $X$ is a $G$-space and $n\geq 0$, let $\bar\pi_n X$ denote the contravariant functor
on $\Pi_G X$ whose value at $x\colon G/H\to X$ is
\[
 (\bar\pi_n X)(x) = \pi_n(\Map_G(G/H,X),x) = \pi_n(X^H,x) 
\]
where the group on the right (set, if $n=0$) is the usual nonequivariant homotopy group
(writing $x$ as shorthand for $x(eH)$ in that case).
The action of a map $(\alpha,\omega)\colon (y\colon G/K\to X) \to (x\colon G/H\to X)$
is then given by 
\[
 (\alpha,\omega)^*\sigma = \sigma\alpha \gpdact \omega
\]
where $\gpdact$ denotes the usual action of a path on a homotopy element.
(When $n=1$, this can be written as a concatenation of paths as
\[
 (\alpha,\omega)^*\sigma = \omega^{-1} * \sigma\alpha * \omega,
\]
where we concatenate paths from right-to-left as we did in defining $\Pi_G X$.)
\end{definition}

As usual, $\bar\pi_n X$ is set-valued when $n=0$, group-valued when $n=1$, and
abelian group-valued when $n\geq 2$.

A closely related idea is that of the system of covering spaces of $X$.

\begin{definition}
Let $X$ be a $G$-space and suppose that each fixed set $X^H$ is 
locally path connected and semi-locally simply connected
(true, for example, if $X$ is a $G$-CW complex).
Let $X^*$ be the contravariant functor on $\Pi_G X$ whose value at
$x\colon G/H\to X$ is the path component of $X^H$ containing $x(eH)$. Explicitly,
\[
 X^*(x) = X^H_x = \{ p\colon G/H\to X \mid x \hmtpc p \}.
\]
The action of a map $(\alpha,\omega)\colon (y\colon G/K\to X) \to (x\colon G/H\to X)$
is given by
\[
 (\alpha,\omega)^* p = p\alpha.
\]
Note that, if $\lambda\colon x\to p$ is a path in $X^H$, then 
$\lambda\alpha*\omega\colon y\to p\alpha$ is a path in $X^K$.

Let $\tilde X^*$ denote the contravariant functor on $\Pi_G X$ whose
value at $x\colon G/H\to X$ is the simply connected covering space
of $X^H_x$. Explicitly,
\[
 \tilde X^*(x) = \tilde X^H_x = \{ [\lambda]\colon G/H\times I\to X \mid \lambda(-,0) = x \}
\]
where $[\lambda]$ denotes a homotopy class of paths rel endpoints.
As usual, we topologize $\tilde X^*(x)$ so that the endpoint projection
$\tilde X^H_x \to X^H_x$ is a covering space, in fact
the universal covering space.
The action of a map $(\alpha,\omega)\colon (y\colon G/K\to X) \to (x\colon G/H\to X)$
is then given by
\[
 (\alpha,\omega)^*\lambda = \lambda\alpha * \omega.
\]
\end{definition}

Now, let's look more closely at the action of $\Pi_G X$ on $\tilde X^*$.
Fix an $x\colon G/H\to X$ and write $\Aut(x) = \Pi_G X(x,x)$, the automorphism group of
$x$ as an object of $\Pi_G X$. 
(Recall that every self-map in $\Pi_G X$ is an isomorphism, i.e., $\Pi_G X$ is
an {\em EI-category} as in \cite{Lu:transfgroups}.)
From the definitions, we have
\[
 \Aut(x) = \{ (\alpha,\omega) \mid 
 		\alpha\colon G/H\to G/H \text{ and }
		\omega\colon x\to x\alpha \}.
\]
We have an inclusion $\pi_1(X^H,x) \to \Aut(x)$ as a normal subgroup,
given by $\omega \mapsto (\id, \omega)$. The quotient group
is $W_xH$, the subgroup of $WH = NH/H$ that takes $X^H_x$ to itself.
(Here, we identify $\alpha\colon G/H\to G/H$ with $n^{-1}H\in NH/H$ such that
$\alpha(eH) = nH$; we need to take the inverse to make this an isomorphism.)
That is, we have an extension
\[
 1 \to \pi_1 (X^H,x) \to \Aut(x) \to W_xH \to 1.
\]
Note that $W_xH$ includes the identity component of $WH$, so has the same dimension as $WH$.
$\Aut(x)$ is a not-necessarily compact Lie group of that dimension, being the extension
of $W_xH$ by a discrete group.
This extension need not split: Consider, for example,
the free action of $S^1$ on itself by multiplication and take
$x = \id\colon S^1\to S^1$. Then $H = e$, $W_xH = S^1$, 
$\pi_1(S^1,x) \iso \Z$, and $\Aut(x) \iso \Real$, so that the exact sequence above is
\[
 0 \to \Z \to \Real \to S^1 \to 0.
\]

Because $\Aut(x)$ is the automorphism group of $x$, it acts on $\tilde X^*(x) = \tilde X^H_x$.
However, $\tilde X^*$ is a contravariant functor, so we should view this as a right action
and write
\[
 \lambda\cdot (\alpha,\omega) = \lambda\alpha * \omega.
\]
This extends the usual free action of $\pi_1(X^H,x)$ and is compatible with the action
of $W_xH$ on $X^*(x)$, in the sense that the projection
$\tilde X^H_x \to X^H_x$ is $\Aut(x)$-equivariant, where $\Aut(x)$ acts on $X^H_x$
via the projection $\Aut(x)\to W_xH$.
(We think of $WH$ as acting
on the right on $X^H$ via $p\cdot nH = n^{-1}p$.)
This is a principal $(\pi_1(X^H,x);\Aut(x))$-bundle in the language
of \cite{May:equivariantbundles}.

If we take $x$ to be the basepoint in each $\tilde X^*(x)$, the action of $\Pi_G X$ does not
preserves basepoints. However, because each $\tilde X^*(x)$ is simply connected, we can still
take $\pi_n(\tilde X^*(x),x)$ and obtain a well-defined functor $\bar\pi_n\tilde X^*$ on $\Pi_G X$.
Of course, $\bar\pi_0\tilde X^*$ and $\bar\pi_1\tilde X^*$ are trivial. More interestingly, the projection
$\tilde X^*\to X^*$ induces an isomorphism $\bar\pi_n\tilde X^*\to \bar\pi_n X$ for $n\geq 2$.
This is a simple corollary of the nonequivariant result.

We now turn to relative notions and long exact sequences.

\begin{definition}\label{def:homotopypairs}
Let $\phi\colon X\to Y$ be a $G$-map. 
For $x\colon G/H\to X$, let $F(\phi^H,x)$ denote the (nonequivariant) homotopy fiber of the
based map
\[
 (\Map_G(G/H,X),x) \to (\Map_G(G/H,Y),\phi x).
\]
That is,
\[
 F(\phi^H,x) = \{ (p,\lambda) \mid p\colon G/H\to X \text{ and }
					\lambda\colon \phi x\to \phi p \},
\]
where we take actual paths $\lambda\colon G/H\times I\to Y$, not homotopy classes.
We write $x$ again for the point $(x,\overline{\phi x})$, where
$\overline{\phi x}$ denotes the constant path at $\phi x$, and use this as the basepoint.
For a morphism $(\alpha,\omega)\colon (x'\colon G/K\to X) \to (x\colon G/H\to X)$, take a specific
choice of representative $\eta\in\omega$ to get a map
$(\alpha,\eta)^*\colon F(\phi^H,x) \to F(\phi^K,x')$, defined by
\[
 (\alpha,\eta)^*(p,\lambda) = (p\alpha, \lambda\alpha * \phi\eta).
\]
Although this is not a based map, $\eta$ gives a preferred path from
$x'$ to $(\alpha,\eta)^* x$.
For $n\geq 1$, let $\bar\pi_n\phi$ be the
contravariant functor on $\Pi_G X$ whose value at $x$ is
\[
 (\bar\pi_n\phi)(x) = \pi_{n-1} F(\phi^H,x).
\]
The action of a map $(\alpha,\omega)$ is the action of a representative $(\alpha,\eta)$ as above,
using the preferred path determined by $\eta$ to move to the proper basepoint.
One can check that this gives a well-defined functor after passing to homotopy classes.
\end{definition}

We now get the following immediately from the nonequivariant long exact homotopy sequence.
The functor $\phi^*\bar\pi_n Y$ is the pullback of $\bar\pi_n Y$ to $\Pi_G X$.

\begin{proposition}
If $\phi\colon X\to Y$ is a $G$-map, then we have a long exact sequence
of functors on $\Pi_G X$,
\[
 \cdots\to \bar\pi_n X \to \phi^*\bar\pi_n Y\to \bar\pi_n\phi \to \bar\pi_{n-1} X \to \cdots
 \to \bar\pi_1\phi\to \bar\pi_0 X \to \phi^*\bar\pi_0 Y.
\]
\qed
\end{proposition}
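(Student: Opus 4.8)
The plan is to read off the sequence objectwise from the ordinary long exact homotopy sequence of a homotopy fiber, and then to observe that every map in it is a natural transformation of functors on $\Pi_G X$; since exactness of a sequence of functors into pointed sets, groups, or abelian groups is detected at each object, objectwise exactness then gives the statement.

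First I would fix an object $x\colon G/H\to X$ of $\Pi_G X$. Under the identifications $\Map_G(G/H,X)=X^H$ and $\Map_G(G/H,Y)=Y^H$, the based map used in Definition~\ref{def:homotopypairs} is simply $\phi^H\colon(X^H,x)\to(Y^H,\phi x)$, and $F(\phi^H,x)$ is by construction its homotopy fiber. The nonequivariant long exact sequence of the homotopy fiber sequence $F(\phi^H,x)\to X^H\to Y^H$ reads
\[
 \cdots\to \pi_n(X^H,x)\to \pi_n(Y^H,\phi x)\to \pi_{n-1}F(\phi^H,x)\to \pi_{n-1}(X^H,x)\to\cdots,
\]
ending with $\pi_0 F(\phi^H,x)\to\pi_0(X^H,x)\to\pi_0(Y^H,\phi x)$ as pointed sets. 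By the definitions of $\bar\pi_n X$, of $\bar\pi_n Y$ (so of its pullback $\phi^*\bar\pi_n Y$), and of $\bar\pi_n\phi$ in Definition~\ref{def:homotopypairs}, this is exactly the asserted sequence evaluated at $x$; in particular it is exact at $x$.

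Next I would check naturality. Given a morphism $(\alpha,\omega)\colon(x'\colon G/K\to X)\to(x\colon G/H\to X)$, choose a representative path $\eta\in\omega$. Restriction along $\alpha$, corrected by $\eta$ on $X^H$ and by $\phi\eta$ on $Y^H$, gives a map of homotopy fiber sequences from $F(\phi^H,x)\to X^H\to Y^H$ to $F(\phi^K,x')\to X^K\to Y^K$ whose effect on fibers is the map $(\alpha,\eta)^*$ of Definition~\ref{def:homotopypairs} and whose effect on base and total space is $p\mapsto p\alpha$ followed by the transport supplied by $\eta$. Functoriality of the nonequivariant long exact homotopy sequence under a map of homotopy fiber sequences then yields a map between the two long exact sequences above, and on the three families of terms this map is precisely the action of $(\alpha,\omega)$ on $\bar\pi_n X$, $\phi^*\bar\pi_n Y$, and $\bar\pi_n\phi$. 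That the induced maps are independent of the choice of $\eta\in\omega$ after passing to homotopy groups, and that they compose correctly, is exactly the well-definedness already recorded in Definition~\ref{def:homotopypairs} (and in the earlier definition of $\bar\pi_n X$); hence each arrow in the sequence is a natural transformation, and we may assemble the objectwise sequences into a sequence of functors on $\Pi_G X$, exact by the first step.

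The only genuine work is the bookkeeping in the third step: tracking the basepoint corrections provided by the representative $\eta$ and verifying that the resulting map of homotopy fiber sequences is compatible, up to based homotopy, with those corrections, so that independence of $\eta$ survives on $\pi_\ast$. But this is the same bookkeeping that already justifies the functoriality claim in Definition~\ref{def:homotopypairs}, so nothing beyond it is required.
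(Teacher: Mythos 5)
Your proposal is correct and follows the same route the paper indicates: the paper states the result follows ``immediately from the nonequivariant long exact homotopy sequence'' and gives no further proof, and your argument is precisely a careful spelling-out of that claim — objectwise exactness from the fiber sequence $F(\phi^H,x)\to X^H\to Y^H$, plus the naturality already built into Definition~\ref{def:homotopypairs}.
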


Here, by a long exact sequence of functors we mean a sequence that is exact when
evaluated at each object of $\Pi_G X$. It is exact at $\bar\pi_1\phi$ and $\bar\pi_0 X$ in the usual sense.

We will often use the following more explicit description of elements of $\bar\pi_n\phi$. 
If $x\colon G/H\to X$, an element of $(\bar\pi_n\phi)(x)$ can be represented as the homotopy
class of a pair of maps $(\alpha,\beta)$ that fit into a commutative diagram of the following form:
\[
 \xymatrix{
  G/H\times S^{n-1} \ar[d]_i \ar[r]^-\alpha & X \ar[d]^\phi \\
  G/H\times D^n \ar[r]_-\beta & Y
 }
\]
where $\alpha|(G/H\times *) = x$.

We will also need to discuss the homotopy groups of squares, which can be defined
similarly to Definition~\ref{def:homotopypairs} and participate in the expected long exact sequences. 
In practice, we think of them as follows. 
Suppose that $\Phi$ is the following square of $G$-maps:
\[
 \xymatrix{
	A \ar[r]^i \ar[d]_\phi & X \ar[d]^\psi \\
	B \ar[r]_j & Y.
 }
\]
For $n\geq 2$, consider the disc $D^n$ and decompose its boundary sphere as
\[
 \bndry D^n = D_+^{n-1} \union_{S^{n-2}} D_-^{n-1},
\]
where $S^{n-2}$ is the equator, $D_+^{n-1}$ is the upper hemisphere, and $D_-^{n-1}$ is the lower
hemisphere. Taking a basepoint on the equator, an element of 
$\bar\pi_n\Phi(a\colon G/H\to A)$ is the based homotopy
class of a diagram of the following form:
\[
 \xymatrix@!C=4em{
  & G/H\times S^{n-2} \ar[rr]^\alpha \ar[dl] \ar'[d][dd]
    & & A \ar[dd]^\phi \ar[dl]_i \\
  G/H\times D_-^{n-1} \ar[rr]^(.65)\gamma \ar[dd]
    & & X \ar[dd]^(.3)\psi \\
  & G/H\times D_+^{n-1} \ar'[r]_-\beta[rr] \ar[dl]
    & & B \ar[dl]^j \\
  G/H \times D^n \ar[rr]_\delta
    & & Y
 }
\]
Note that we can also write $\bar\pi_n\Phi = \bar\pi_n(\psi,\phi) = \bar\pi_n(j,i)$.

We give a general definition of what we mean by doing surgery on a map
from a manifold to a space.
Later we shall define what we mean by doing surgery on a relative homotopy element.
Here and throughout, we write $|-|$ for the integer dimension of a vector space,
manifold, or orbit of $G$.

\begin{definition}\label{def:surgery1}
Let $M$ be a $G$-manifold, let $X$ be a $G$-space, and let
$f\colon M\to X$ be a $G$-map.
Suppose that we have the following diagram:
\[
 \xymatrix{
   G\times_H(S(V)\times D(W)) \ar[r]^-{\bar\alpha} \ar[d]_i
     & M \ar[d]^f \\
   G\times_H(D(V)\times D(W)) \ar[r]_-{\bar\beta}
     & X
 }
\]
where $V$ and $W$ are representations of $H$ with $|M| = |V|+|W|+|G/H|-1$,
and $\bar\alpha$ is a smooth embedding in the interior of $M$.
Let
\[
 N = (M\times I) \union_{\bar\alpha\times 1} G\times_H (D(V)\times D(W))
\]
with $\bndry N = M\union (\bndry M\times I) \union M'$,
and use projection to $M$ and $\bar\beta$ to extend $f$ to a map $\bar f\colon N\to X$
(well-defined up to homotopy).
Write $f'\colon M'\to X$ for the restriction of $\bar f$ to $M'$.
We call $f\colon M'\to X$ the {\em result of doing surgery on $(\bar\alpha,\bar\beta)$}
and $\bar f\colon N\to X$ the {\em trace} of the surgery.
\end{definition}

Note that this definition is symmetrical in $M$ and $M'$, as there is an embedding
\[
 \bar\alpha'\colon G\times_H (D(V)\times S(W)) \to M'
\]
and we can view $N$ as the trace of a surgery on $(\bar\alpha',\bar\beta)$, with result $M$.

Next, we need to review a bit of algebra. Write $\Pi_G X$-$\Mod$ ambiguously for the category
of contravariant functors on $\Pi_G X$ taking values in sets, groups, or abelian groups.
We mentioned above that a $G$-map $\phi\colon X\to Y$ induces a pullback functor
\[
 \phi^*\colon \text{$\Pi_G Y$-$\Mod$} \to \text{$\Pi_G X$-$\Mod$}.
\]
This functor has a left adjoint
\[
 \phi_!\colon \text{$\Pi_G X$-$\Mod$} \to \text{$\Pi_G Y$-$\Mod$}
\]
that can be defined via a coend:
\[
 (\phi_!T)(y) = \int^{x\in\Pi_G X} \Pi_G Y(y,\phi(x)) \times T(x).
\]
Here, we need to interpret $\Pi_G Y(y,\phi(x)) \times T(x)$ as
\begin{enumerate}
\item
a product of sets, when $T$ is set-valued;

\item
the coproduct $\coprod_{\lambda\in\Pi_G Y(y,\phi(x))} T(x)$ of groups,
when $T$ is group-valued;

\item
the direct sum $\Dirsum_{\lambda\in\Pi_G Y(y,\phi(x))} T(x)$ of abelian groups,
when $T$ is abelian group-valued, which can also be thought of as the tensor product
\[
 \Z\Pi_G Y(y,\phi(x)) \tensor T(x).
\]
\end{enumerate}
One of the most interesting properties of $\phi_!$ is that it preserves represented functors:
\[
 \phi_!\Pi_GX(-,x) = \Pi_GY(-,\phi(x)).
\]

Similarly, for a fixed $X$, consider a subgroup $K\leq G$. There is a functor
\[
 i_K^G\colon \Pi_K X \to \Pi_G X
\]
given by $i_K^G p = G\times_K p$ when $p\colon K/H\to X$, and similarly on morphisms.
This induces the restriction map
\[
 \Res_K^G = (i_K^G)^*\colon \text{$\Pi_G X$-$\Mod$} \to \text{$\Pi_K X$-$\Mod$}
\]
and its left adjoint, the induction map
\[
 \Ind_K^G = (i_K^G)_!\colon \text{$\Pi_K X$-$\Mod$} \to \text{$\Pi_G X$-$\Mod$}.
\]
Induction can again be defined explicitly using a coend:
\[
 (\Ind_K^G T)(p) = \int^{q\in\Pi_K X} \Pi_G X(p,i_K^G q) \times T(q),
\]
and $\Ind_K^G$ preserves represented functors:
\[
 \Ind_K^G\Pi_KX(-,p) = \Pi_GX(-,i_K^Gp).
\]

Returning to surgery, we compare the homotopy groups of the trace of a surgery to those
of the original map.
As in Definition~\ref{def:surgery1},
we suppose that we have done surgery on a diagram
\[
 \xymatrix{
   G\times_H(S(V)\times D(W)) \ar[r]^-{\bar\alpha} \ar[d]_i
     & M \ar[d]^f \\
   G\times_H(D(V)\times D(W)) \ar[r]_-{\bar\beta}
     & X
 }
\]
with trace $\bar f\colon N\to X$. Up to homotopy, we have
\[
 N \hmtpc_G M\union_{\alpha} (G\times_H D(V))
\]
where $\alpha$ is the restriction of $\bar\alpha$ to $G\times_H(S(V)\times 0)$,
and, up to homotopy, $\bar f$ is the map from this pushout to $X$.
We think of this as attaching a $V$-cell to $M$.
We therefore look at the following general situation.

\begin{proposition}\label{prop:killHomotopy}
Let $V$ be a representation of $H\leq G$ with $n = |V^H|\geq 1$.
Let $X$ and $Y$ be $G$-spaces and 
suppose that we have the following diagram of $G$-maps:
\[
 \xymatrix{
   G\times_H S(V) \ar[r]^-{\alpha} \ar[d]_i
     & X \ar[d]^f \\
   G\times_H D(V) \ar[r]_-{\beta}
     & Y
 }
\]
let $X' = X\union_\alpha (G\times_H D(V))$ be the result of attaching
a cell along $\alpha$, let $f'\colon X'\to Y$ be the induced map, and
let $\Phi$ be the induced square:
\[
 \xymatrix{
	X \ar[r]^\phi \ar[d]_f & X' \ar[d]^{f'} \\
	Y \ar@{=}[r] & Y
 }
\]
Then $\phi_*\colon \Pi_G X\to \Pi_G X'$ is essentially surjective over $\orb G$ and
$\bar\pi_0 X\to \phi^*\bar\pi_0 X'$ is (objectwise) surjective. Further,
\begin{enumerate}
\item
if $n=1$, then $\bar\pi_1 f\to \phi^*\bar\pi_1 f'$ is surjective and the following is
an exact sequence of functors on $\Pi_G X$ in the usual sense, where the first
map is induced by $(\alpha,\beta)$:
\[
 \alpha_!\Ind_H^G\bar\pi_1(D(V),S(V)) \to \bar\pi_1 f \to \phi^*\bar\pi_1 f';
\]

\item
if $n=2$, then $\bar\pi_2\Phi = 0$, $\bar\pi_1 f\iso \phi^*\bar\pi_1 f'$, and the following
is an exact sequence of functors on $\Pi_G X$, where the first map is induced by $(\alpha,\beta)$:
\[
 \alpha_!\Ind_H^G\bar\pi_2(D(V),S(V)) \to \bar\pi_2 f \to \phi^*\bar\pi_2 f' \to 0;
\]
and

\item
if $n\geq 3$, then
\begin{enumerate}
\item $\Pi_G X'$ is equivalent to $\Pi_G X$ over $\orb G$,
\item $\bar\pi_k\Phi = 0$ for $2\leq k \leq n$, $\bar\pi_k f \iso \bar\pi_k f'$ for $1\leq k \leq n-1$, and
\item the following
is an exact sequence of functors on $\Pi_G X$, where the first map is induced by $(\alpha,\beta)$:
\[
 \alpha_!\Ind_H^G\bar\pi_{n}(D(V),S(V)) \to \bar\pi_{n} f \to \bar\pi_{n} f' \to 0.
\]
\end{enumerate}
\end{enumerate}
\end{proposition}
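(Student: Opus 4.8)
The statement is the equivariant analogue of the standard fact (Wall, \S1) that attaching an $n$-cell along a map $\alpha$ changes $\pi_1$ only when $n\le 2$ and, in all cases, the relative homotopy functor $\bar\pi_n f$ is hit exactly by the homotopy of the cell, modulo the image of $\bar\pi_n f'$. The plan is to reduce everything to nonequivariant statements about the components of fixed sets, using the description of $\bar\pi_n f$ and $\bar\pi_n\Phi$ as functors built out of the nonequivariant homotopy fibers $F(f^H,x)$, and then to patch these fixed-set statements together over $\Pi_G X$. The key geometric input is the cofiber-sequence description $X' = X\cup_\alpha (G\times_H D(V))$ together with an analysis of what $(G\times_H D(V))^K$ and $(G\times_H S(V))^K$ look like: for $K\le G$, $(G\times_H D(V))^K$ is a disjoint union (indexed by double cosets / conjugacy data) of discs $D(V')$ where $V'$ is a fixed-space of a conjugate of $V$, and similarly for the sphere, with the dimension of the disc being $|V^{gHg^{-1}\cap K\text{-relevant}}|$; crucially, the relevant hemispheres have dimension $\ge n = |V^H|$ at the ``top'' stratum and the inclusion $(G\times_H S(V))^K\hookrightarrow (G\times_H D(V))^K$ is $(n-1)$-connected on each such component.

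**Step 1: the effect on the fundamental groupoid.** First I would verify that $\phi_*\colon\Pi_G X\to\Pi_G X'$ is essentially surjective over $\orb G$ and that $\bar\pi_0 X\to\phi^*\bar\pi_0 X'$ is objectwise surjective. Since $X'$ is obtained from $X$ by attaching cells of the form $G\times_H D(V)$, every orbit $G/K\to X'$ is homotopic into $X$ (the disc deformation retracts onto its boundary sphere, which lies in $X$), giving essential surjectivity and $\bar\pi_0$-surjectivity at once. When $n\ge 3$, the attaching sphere $(G\times_H S(V))^K$ is connected and simply connected in the relevant range on each component (because $|V^{K'}|\ge |V^H|=n\ge 3$ whenever $(G\times_H S(V))^K$ has a component through the $H$-stratum, by the dimension/gap bookkeeping of Lemma~\ref{lem:admissable} and the preceding lemma), so no new path components and no new $\pi_1$ are created in any fixed set; this gives the equivalence $\Pi_G X'\simeq\Pi_G X$ over $\orb G$ in case (3)(a). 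The cases $n=1,2$ are exactly where $\pi_0$ or $\pi_1$ of some fixed set can change, which is why only surjectivity (not bijectivity) of $\bar\pi_1 f\to\phi^*\bar\pi_1 f'$ is asserted for $n=1$.

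**Step 2: the exact sequence from the cellular attachment.** For the homotopy-group statements I would work objectwise at each $x\colon G/H\to X$ (more precisely at each object of $\Pi_G X$) and use the nonequivariant homotopy excision / Blakers--Massey theorem. Fix $K\le G$ and a component; on $K$-fixed sets the pushout $X'=X\cup_\alpha(G\times_H D(V))$ becomes a nonequivariant pushout of the fixed sets, attaching cells $D(V')$ of dimension $\ge n$ along their boundary spheres. Blakers--Massey then says the square $\Phi^K$ is $(n + (\text{conn of attaching sphere pair}) )$-connected; since each attaching pair $(D(V'),S(V'))$ is $(|V'|-1)$-connected with $|V'|\ge n$, we get $\bar\pi_k\Phi(x)=0$ for $2\le k\le n$, hence $\bar\pi_k f\iso \bar\pi_k f'$ for $1\le k\le n-1$ (via the long exact sequence of the square, $\bar\pi_k f\to\bar\pi_k f'\to\bar\pi_k\Phi$), and an exact sequence $\bar\pi_{n+1}\Phi\to\bar\pi_n f\to\bar\pi_n f'\to\bar\pi_n\Phi=0$ at the top. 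The remaining task is to identify the image term: the relative homotopy $\bar\pi_{n+1}\Phi$ of the pushout square is, by excision at the level of each fixed set, exactly the homotopy of the cells attached, which equivariantly assembles to $\alpha_!\Ind_H^G\bar\pi_n(D(V),S(V))$ — this is where the coend formula for $\alpha_!$ and for $\Ind_H^G$ (which preserve represented functors, as recalled just before the proposition) does the bookkeeping of ``one generator per cell, pushed forward along $\alpha$.'' Composing $\bar\pi_{n+1}\Phi\to\bar\pi_n f$ with this identification gives the asserted map ``induced by $(\alpha,\beta)$''. The cases $n=1$ and $n=2$ are the low-dimensional endpoints of the same long exact sequence: for $n=2$ one has $\bar\pi_2\Phi=0$ but $\bar\pi_3\Phi$ need not vanish, giving $\bar\pi_1 f\iso\phi^*\bar\pi_1 f'$ and the four-term sequence ending in $0$; for $n=1$ Blakers--Massey is too weak to kill $\bar\pi_2\Phi$, so one only gets surjectivity of $\bar\pi_1 f\to\phi^*\bar\pi_1 f'$ and the (non-right-exact) three-term sequence, exactly as in Wall's $\pi_1$ discussion.

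**Main obstacle.** The routine part is the nonequivariant excision on each fixed set; the delicate part — and the step I would spend the most care on — is the \emph{functoriality and assembly over $\Pi_G X$}: checking that the objectwise nonequivariant identifications of $\bar\pi_{n+1}\Phi(x)$ with ``homotopy of the attached cells'' are natural in morphisms $(\alpha,\omega)$ of $\Pi_G X$, so that the result is genuinely the represented functor $\alpha_!\Ind_H^G\bar\pi_n(D(V),S(V))$ rather than just something abstractly isomorphic to it at each object. This requires matching the action of a path-class on the nonequivariant homotopy fiber with the coend description of $\alpha_!$ and $\Ind_H^G$, and keeping track of the fact that $(G\times_H S(V))^K$ breaks into several orbit-components (by the two lemmas in \S3) which is precisely what the induction functor encodes. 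A secondary point needing care is the dimension/connectivity bookkeeping in Step 1–2: one must confirm that the gap hypotheses are \emph{not} needed here (they were needed for the Spivak fibration), only the bare inequalities $|V^{K'}|\ge|V^H|$ coming from $\Lie$-subspace containment, so that the connectivity estimates hold uniformly across all fixed-set strata that actually contribute.
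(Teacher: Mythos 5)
Your overall strategy --- reduce to nonequivariant statements about fixed sets and then assemble over $\Pi_G X$ --- is the same as the paper's, but the tool you propose to apply on each fixed set is different and, in one key place, not strong enough as you state it.

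The paper argues directly by general position: a diagram representing an element of $\bar\pi_k\phi(x)$, restricted to the $K$-fixed set, is a map of $D^k$ into $(X')^K$, and the core $(G/H\times 0)^K$ of the attached cell has codimension $|V^K|\ge |V^H|=n$ in $(G\times_H D(V))^K$; so for $k<n$ the map can be pushed off the core (giving $\bar\pi_k\phi=0$), and for $k=n$ it can be made transverse, meeting the core at finitely many points, each contributing a summand of $\alpha_!\Ind_H^G\bar\pi_n(D(V),S(V))$ --- this is how the paper proves surjectivity at the top degree. You instead invoke Blakers--Massey. With the cofibration $(G\times_H S(V))^K\hookrightarrow(G\times_H D(V))^K$ being $(n-1)$-connected on the relevant components and the map into $X^K$ only $0$-connected, BM gives that $\pi_k(A,C)\to\pi_k((X')^K,X^K)$ is iso for $k<n-1$ and epi for $k=n-1$, which does yield $\pi_k((X')^K,X^K)=0$ for $k\le n-1$, i.e., the vanishing of $\bar\pi_k\phi$ for $1\le k\le n-1$. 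So far your route and the paper's agree (modulo the observation that $(G\times_H D(V))^K$ is not a disjoint union of discs but of tubes $(\text{orbit})\times D(V')$ when $G$ is a compact Lie group --- this does not change the relative connectivity, but the phrase ``discs $D(V')$'' should be corrected).

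Where the gap appears is in the sentence asserting that ``the relative homotopy $\bar\pi_{n+1}\Phi$ of the pushout square is, by excision at the level of each fixed set, exactly the homotopy of the cells attached.'' Excision (Blakers--Massey) at the connectivity level available does \emph{not} reach degree $n$: you only control the excision map strictly below $n-1$, with epi at $n-1$, so you get no identification of $\pi_n((X')^K,X^K)$ from BM alone. The paper only needs (and only proves) that $\alpha_!\Ind_H^G\bar\pi_n(D(V),S(V))\to\bar\pi_n\phi$ is \emph{onto}, and does this by the transversality/intersection-count argument described above, not by excision. To rescue your route you would have to supplement BM with the relative Hurewicz theorem: once $(n-1)$-connectivity is known, pass to appropriate covers and identify $\pi_n((X')^K,X^K)$ with $H_n$ of the cellular chain complex, which is free on the attached cells. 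You did not mention Hurewicz, and without it the identification you claim is unjustified. The paper's general-position argument is self-contained and handles both the vanishing below $n$ and the surjectivity at $n$ uniformly, which is why it is the cleaner choice here. Your ``main obstacle'' paragraph about functoriality over $\Pi_G X$ is well taken and matches the paper's concern, but it is the degree-$n$ identification, not the naturality, that is the missing step.
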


\begin{proof}
The statements about $\Pi_G X\to \Pi_G X'$ are clear because $V^H\neq 0$, and justify
considering $\bar\pi_k X'$ to be a functor on $\Pi_G X$ in part (3) of the statement.
The surjectivity of $\bar\pi_0 X\to \phi^*\bar\pi_0 X'$
and of $\bar\pi_1 f\to \bar\pi_1 f'$ for all $n$ are also clear.

The claimed isomorphisms $\bar\pi_k f \iso \bar\pi_k f'$ follow from the claimed vanishings
of $\bar\pi_k\Phi$ and the long exact sequence.
There is another long exact sequence in which $\bar\pi_*\Phi$ participates, in which the
other two terms are $\bar\pi_*\phi$ and $\bar\pi_*(1_Y) = 0$. From this it follows that
$\bar\pi_k\Phi \iso \bar\pi_{k-1}\phi$, and we show that the latter vanishes in the range claimed.

So, consider $\bar\pi_k\phi$ for $1 \leq k \leq n-1$.
A typical element of $(\bar\pi_k\phi)(x\colon G/K\to X)$ can be thought of, in adjoint form,
as the homotopy class of a nonequivariant diagram of based spaces (taking the north pole as the basepoint of $S^{k-1}$) of the following form:
\[
 \xymatrix{
  S^{k-1} \ar[d] \ar[r]^-{\gamma} & (X^K,x) \ar[d]^{\phi} \\
  D^k \ar[r]_-{\delta} & ([X\union_\alpha (G\times_H D(V))]^K,x)
 }
\]
Such a diagram will represent the trivial element if $\delta$ fails to meet the center of the disc,
$(G/H\times 0)^K$. This is a disjoint union of $WK$-orbits, associated with conjugates of $K$ lying in $H$.
Letting $K$ act on $V$ through such a conjugate, the codimension of the correponding orbit in the disc is $|V^K| \geq |V^H| > k$, so, for dimensional reasons, $\delta$ is homotopic to a map missing
the center of the disc, and all such diagrams are trivial.
Therefore, $\bar\pi_k\phi = 0$ for $1\leq k \leq n-1$ as claimed. 

From the long exact homotopy sequence, we now get an exact sequence
\[
 \bar\pi_{n+1}\Phi \to \bar\pi_{n} f\to \phi^*\bar\pi_{n} f' \to 0,
\]
which we can write as
\[
 \bar\pi_{n}\phi \to \bar\pi_{n} f\to \phi^*\bar\pi_{n} f' \to 0,
\]
Consider the map
$\alpha_!\Ind_H^G\bar\pi_{n}(D(V),S(V)) \to \bar\pi_{n}\phi$ induced by $(\alpha,\beta)$
Suppose that we have an element of $(\bar\pi_{n}\phi)(x\colon G/K\to X)$, represented
by the following nonequivariant diagram:
\[
 \xymatrix{
  S^{n-1} \ar[d] \ar[r]^-{\gamma} & (X^K,x) \ar[d]^{\phi} \\
  D^{n} \ar[r]_-{\delta} & ([X\union_\alpha (G\times_H D(V))]^K,x)
 }
\]
If $K$ acts on $V$ through a conjugate contained in $H$,
we will have $|V^K|\geq |V^H| = n$, so $\delta$ may well hit the center of
the disc. If it does, for dimensional reasons we may assume (after homotopy) that it does so at
a finite number of isolated points in the interior of $D^{n}$,
at each of which it meets $(G/H\times 0)^K$ transversely.
We can then homotope $\delta$ so that a small ball around each of these points is mapped
homeomorphically to $gH\times D(V^K)$ for some $g\in G$, while the rest of $D^{n}$ is
mapped to the boundary sphere $\alpha(G/H\times S(V^H))$ except for a tail connecting $x(eK)$ to
the sphere.
This exactly describes a sum of elements coming from $\alpha_!\Ind_H^G \bar\pi_{n}(D(V),S(V))$,
showing that 
\[
 \alpha_!\Ind_H^G \bar\pi_{n}(D(V),S(V))
  \to \bar\pi_{n}\phi
\]
is onto, hence
\[
 \alpha_!\Ind_H^G\bar\pi_{n}(D(V),S(V)) \to \bar\pi_{n-1} X \to \phi^*\bar\pi_{n-1} X' \to 0
\]
is exact as claimed.
\end{proof}

\begin{corollary}\label{cor:surgeryKillsHomotopy}
Let $M$ be a $G$-manifold, let $X$ be a $G$-space, and let $f\colon M\to X$
be a $G$-map.
Let $V$ be a representation of $H\leq G$ with $n = |V^H|\geq 1$.
Consider the following diagram:
\[
 \xymatrix{
   G\times_H(S(V)\times D(W)) \ar[r]^-{\bar\alpha} \ar[d]_i
     & M \ar[d]^f \\
   G\times_H(D(V)\times D(W)) \ar[r]_-{\bar\beta}
     & X
 }
\]
where $\bar\alpha$ is an embedding
with $|M| = |V| + |W| + |G/H| - 1$, and let
$f'\colon M'\to X$ be the result of doing surgery on $(\bar\alpha,\bar\beta)$.
Let $m = |V^H| + |W^H| + |(G/H)^H| - 1$, so $m$ is the dimension of the components
of $M^H$ containing the image of $\alpha^H$.
Let $w = |WH| = |(G/H)^H|$.
If $2n < m - w + 1$, which we can also write (for later purposes) as
\[
 n \leq \begin{cases}
			\lfloor (m-w+1)/2 \rfloor & \text{if $m-w$ is even} \\
			\lfloor (m-w+1)/2 \rfloor - 1 & \text{if $m-w$ is odd,}
		\end{cases}
\]
then
\begin{enumerate}
\item
$\bar\pi_k f \iso \bar\pi_k f'$ for $1\leq k \leq n - 1$ and

\item
there is an exact sequence
\[
 \alpha_!\Ind_H^G\bar\pi_{n} (D(V),S(V)) \to \bar\pi_{n} f \to \bar\pi_{n} f' \to 0.
\]
\end{enumerate}
\end{corollary}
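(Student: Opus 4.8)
The plan is to realize $f\colon M\to X$ and $f'\colon M'\to X$ as the two ends of the trace $\bar f\colon N\to X$ of the surgery and to apply Proposition~\ref{prop:killHomotopy} to each of the two end inclusions $M\hookrightarrow N$ and $M'\hookrightarrow N$; up to $G$-homotopy each of these is a single cell attachment, and the gap hypothesis is exactly what makes the second cell have large enough fixed-set dimension for the ``clean'' conclusions of that proposition to apply.

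First I would recall, from the discussion surrounding Definition~\ref{def:surgery1} and Proposition~\ref{prop:killHomotopy}, that up to $G$-homotopy $N\hmtpc_G M\union_\alpha(G\times_H D(V))$, where $\alpha = \bar\alpha|_{G\times_H(S(V)\times 0)}$, that the inclusion $\phi\colon M\to N$ corresponds to the cell attachment, and that $\bar f\circ\phi\hmtpc f$ while $\bar f|_{M'} = f'$. Applying Proposition~\ref{prop:killHomotopy} with the representation $V$ (so $n=|V^H|\geq1$) to the square with $\phi$ on top and the identity of $X$ on the bottom, we conclude that $\phi$ is essentially surjective on fundamental groupoids over $\orb G$, that $\bar\pi_k f\iso\phi^*\bar\pi_k\bar f$ for $1\leq k\leq n-1$, and that
\[
 \alpha_!\Ind_H^G\bar\pi_n(D(V),S(V)) \to \bar\pi_n f \to \phi^*\bar\pi_n\bar f \to 0
\]
is exact; these are precisely cases (1), (2), (3) of that proposition, the first two being the low-dimensional versions (in case (1) the final $0$ comes from the separate surjectivity statement).

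Next I would invoke the symmetry of the trace noted after Definition~\ref{def:surgery1}: $N$ is also the trace of a surgery with data $\bar\alpha'\colon G\times_H(S(W)\times D(V))\to M'$, so up to $G$-homotopy $N\hmtpc_G M'\union_{\alpha'}(G\times_H D(W))$ with the inclusion $\phi'\colon M'\to N$ corresponding to attaching a $W$-cell. Set $n' = |W^H|$. Since $(G/H)^H = N_GH/H$ has dimension $w=|WH|$, the defining relation $m = |V^H|+|W^H|+|(G/H)^H|-1$ reads $m = n+n'+w-1$, so $n' = m-w+1-n$; thus the hypothesis $2n<m-w+1$ is exactly the inequality $n<n'$, i.e.\ $n'\geq n+1\geq2$. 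Applying Proposition~\ref{prop:killHomotopy} now with the representation $W$ (case (2) or (3)) gives $\bar\pi_k f'\iso(\phi')^*\bar\pi_k\bar f$ for $1\leq k\leq n'-1$, hence in particular for $1\leq k\leq n$; and when $n'\geq3$ (which fails only if $n=1$) the functor $(\phi')^*$ is moreover an equivalence over $\orb G$. In any case both $\phi$ and $\phi'$ are essentially surjective over $\orb G$, so all of the functors above may be regarded and compared on the single groupoid $\Pi_G N$.

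Splicing the two families of isomorphisms together over $\Pi_G N$ then finishes the proof: for $1\leq k\leq n-1$ we get $\bar\pi_k f\iso\bar\pi_k\bar f\iso\bar\pi_k f'$, which is (1); and for $k=n$ the isomorphism $\bar\pi_n\bar f\iso\bar\pi_n f'$ converts the right-exact sequence of the first step into the asserted sequence
\[
 \alpha_!\Ind_H^G\bar\pi_n(D(V),S(V)) \to \bar\pi_n f \to \bar\pi_n f' \to 0,
\]
which is (2). The step I expect to require the most care is purely organizational: keeping straight the fundamental groupoids $\Pi_G M$, $\Pi_G M'$, $\Pi_G N$ and the pullbacks $\phi^*$, $(\phi')^*$, and checking that the low-dimensional clauses $n=1,2$ of Proposition~\ref{prop:killHomotopy} cause no trouble. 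But the gap hypothesis guarantees $n'\geq2$, so the second (the $W$-cell) attachment always lies in the range where the clean statements of that proposition hold; the only genuinely low-dimensional attachment is the first one, and there the conclusions we actually use --- an isomorphism in degrees $\leq n-1$, which is vacuous when $n=1$, together with the right-exact sequence in degree $n$ --- are exactly what the low-dimensional cases supply.
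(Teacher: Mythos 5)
Your argument is correct and matches the paper's own proof essentially line for line: both view the trace $N$ as a $V$-cell attachment to $M$ and as a $W$-cell attachment to $M'$, apply Proposition~\ref{prop:killHomotopy} on each side, note that the gap hypothesis forces $n' = |W^H| > n$ so the second attachment gives isomorphisms through degree $n$, and then splice. The extra care you take sorting out cases (1)--(3) of the proposition and the comparison of groupoids is not wrong, just slightly more explicit than the paper's terse version.
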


\begin{proof}
Let $\bar f\colon N\to X$ be the trace of the surgery. 
As we pointed out earlier, we have
\[
 N \hmtpc_G M\union (G\times_H D(V))
\]
and also
\[
 N \hmtpc_G M' \union (G\times_H D(W)).
\]
From the first equivalence and Proposition~\ref{prop:killHomotopy}, we see that
$\bar\pi_k \bar f \iso \bar\pi_k f$ for $1\leq k \leq n-1$ and that we have an exact sequence
\[
\alpha_!\Ind_H^G\bar\pi_{n} (D(V),S(V)) \to \bar\pi_{n} f \to \bar\pi_{n} \bar f \to 0.
\]
Now consider the second equivalence and let $p = |W^H|$. 
Our assumptions imply that $p > n$,
so by Proposition~\ref{prop:killHomotopy} again, we have
$\bar\pi_k \bar f \iso \bar\pi_k f'$ for $1\leq k \leq n$.
Substituting $\bar\pi_k f'$ for $\bar\pi_k \bar f$
and $\bar\pi_{n}f'$ for $\bar\pi_{n}\bar f$ above gives the claims of the corollary.
\end{proof}

We interpret this result as follows: Fix a basepoint in $S(V)^H$,
so the group $\pi_{n}(D(V)^H,S(V)^H)$ is generated by the identity 
$(D^n,S^{n-1}) \xrightarrow{\iso} (D(V)^H,S(V)^H)$.
Composing with the adjoint maps $(\hat\alpha,\hat\beta)$ 
gives us a map $(D^n,S^{n-1})\to f^H$, hence a homotopy element
$[\hat\alpha,\hat\beta]\in\pi_{n}(f^H,x)$ where $x$ is the image of the basepoint.
The effect of doing surgery on $(\alpha,\beta)$ is then to leave the homotopy groups below
dimension $n$ unchanged and, in dimension $n$, to kill not just $[\hat\alpha,\hat\beta]$, but the
whole $\Pi_G X$-module it generates.

Concentrating on just $\pi_{n}(f^H,x)$, recall that we have an action of $\Aut(x)$
on this group. Here, the effect of surgery is to kill the $\Aut(x)$-module generated
by $[\hat\alpha,\hat\beta]$. 
This is the generalization of the nonequivariant result \cite[IV.1.5]{Br:surgery},
that surgery kills the $\pi_1 X$-module generated by an element.

Having discussed the homotopy groups of the fixed sets,
we also need a brief discussion of their homology groups.
We will be interested in homology with ``universal local coefficients,''
and there are two ways of looking at this. One is to consider the homology of
the universal covering spaces.
Let $X$ be a $G$-space such that each $X^H$ is locally path connected and
semi-locally simply connected. Recall that we then have the contravariant
functor $\tilde X^*$ on $\Pi_G X$ whose value $\tilde X^*(x)$ at $x\colon G/H\to X$
is the simply-connected covering space of $X^H$ based at $x$.
Applying homology with $\Z$ coefficients, we get the contravariant
functor $H_*(\tilde X^*;\Z)$.
Because $\Aut(x)$ acts on $\tilde X^*(x)$, we get an action of
$\pi_0\Aut(x)$ on $H_*(\tilde X^*(x);\Z)$, so we can think 
of the homology groups as $\Z\pi_0\Aut(x)$-module.
This is the generalization of the nonequivariant view of
$H_*(\tilde X;\Z)$ as a $\Z\pi_1(X)$-module, but note that
$\pi_0\Aut(x)$ is not $\pi_1(X^H_x,x)$; it contains a quotient
of $\pi_1(X^H_x,x)$ and will be larger than that quotient
if $W_xH$ is not connected.

Alternatively, we can consider the local coefficient system
$\Z\Pi X^H(x,-)$ on $X^H_x$.
Nonequivariantly, we know that we have the isomorphism
\[
 H_*(X^H_x;\Z\Pi X^H(x,-)) \iso H_*(\tilde X^*(x);\Z).
\]
(However we define the left-hand side, the isomorphism
is a consequence of the Serre spectral sequence for
the covering $\tilde X\to X$.)
We can view $H_*(X^H_x;\Z\Pi X^H(x,-))$ as a contravariant functor of $x$
as follows:
Recall that the action of $(\alpha,\omega)\colon (y\colon G/K\to X) \to (x\colon G/H\to X)$
on $X^*$ is given by $(\alpha,\omega)^*(p\colon G/H\to X) = p\alpha$.
We then define $(\alpha,\omega)^*\colon \Pi X^H(x,-) \to \Pi X^K(y,-)$ on a path
$\lambda\colon G/H\times I\to X$ starting at $x$ by
\[
 (\alpha,\omega)^*\lambda = \lambda\alpha*\omega,
\]
a by-now familiar formula. It's now easy to check that the
map
\[
 (\alpha,\omega)^*\colon \Z\Pi X^H(x,-) \to \Z\Pi X^K(y,-)
\]
is a map of local coefficient systems over the map $(\alpha,\omega)^*\colon X^H_x\to X^K_y$.
Taking the induced map on homology makes $H_*(X^H_x;\Z\Pi X^H(x,-))$
a contravariant functor on $\Pi_G X$
isomorphic to the functor $H_*(\tilde X^*;\Z)$ above.

We memorialize this discussion and introduce notation in the following definition.

\begin{definition}\label{def:fixedsethomology}
Let $X$ be a $G$-space. When we write $\Mackey H_*(X^*)$ or $H_*(X^H_x)$ without coefficients, we shall mean
the contravariant functor on $\Pi_G X$ defined by
\[
 \Mackey H_*(X^*)(x) = H_*(X^H_x;\Z\Pi X^H_x(x,-)) \iso H^*(\tilde X^*(x);\Z).
\]
Similarly, when we write $\MackeyOp H^*(X^*)$ or $H^*(X^H_x)$, we shall mean the covariant functor defined by
\[
 \MackeyOp H^*(X^*)(x) = H^*(X^H_x;\Z\Pi X^H(-,x)) \iso H^*(\tilde X^*(x);\Z).
\]
We call these the {\em fixed-set homology and cohomology groups of $X$.}
We use similar definitions for relative homology and cohomology, and homology and cohomology
of squares.
\end{definition}

Notice that the nonequivariant Hurewicz map induces a natural transformation
\[
 \Mackey \pi_k(X) \to \Mackey H_k(X^*)
\]
for $k\geq 2$. The easiest way to see this is to note that $\pi_k(X^H_x,x) \iso \pi_k(\tilde X^H_x)$
for $k\geq 2$ and then apply the Hurewicz map to $\tilde X^H_x$.
(There are similar maps available for $k=0$ and 1, but we should not need them.)
In particular, note that, for any $x\colon G/H\to X$ and $k\geq 2$,
\[
 \pi_k(X^H_x,x) \to H_k(X^H_x;\Z\Pi X(x,-))
\]
is a map of $\Z\pi_0\Aut(x)$-modules.

We can now state the following criteria for a map to be a homotopy equivalence.
We use the notation from Definition~\ref{def:universalcoeffs} as well as
the notation above.

\begin{theorem}\label{thm:homotopyequivalence}
Let $f\colon X\to Y$ be a $G$-map between spaces of the homotopy types of $G$-CW complexes.
Suppose that $f_*\colon \Pi_G X\to \Pi_G Y$ is an equivalence over $\orb G$.
Then the following are equivalent:
\begin{enumerate}
\item $f\colon X\to Y$ is a $G$-homotopy equivalence.
\item $f_*\colon \Mackey H^G_*(X)\to \Mackey H^G_*(Y)$ is an isomorphism in integer grading.
\item $f_*\colon \Mackey H_*(X^*)\to \Mackey H_*(Y^*)$ is an isomorphism (in integer grading).
\end{enumerate}
\end{theorem}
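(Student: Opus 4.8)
The plan is to prove $(1)\Rightarrow(2)$ and $(1)\Rightarrow(3)$ by homotopy invariance, $(3)\Rightarrow(1)$ by reducing to nonequivariant Whitehead-type theorems on the components of the fixed sets, and then to close the cycle with $(2)\Rightarrow(3)$, which I obtain by comparing the cellular chain complex over $\stab\Pi_G X$ with the complexes computing fixed-set homology. The implications $(1)\Rightarrow(2)$ and $(1)\Rightarrow(3)$ are immediate, since $\Mackey H^G_*(-)$, $\Mackey H_*((-)^*)$, and $\Pi_G(-)$ are all homotopy-invariant functors. Granting $(3)\Rightarrow(1)$, the theorem then reduces to the single remaining implication $(2)\Rightarrow(3)$, since $(3)\Rightarrow(1)\Rightarrow(2)$ and $(2)\Rightarrow(3)\Rightarrow(1)$.

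For $(3)\Rightarrow(1)$, recall from the excerpt that $\phi^{-1}(G/H)\subseteq\Pi_G X$ is the nonequivariant fundamental groupoid $\Pi X^H$. Since $f_*\colon\Pi_G X\to\Pi_G Y$ is an equivalence over $\orb G$, it restricts, for each $H$, to an equivalence $\Pi X^H\to\Pi Y^H$; thus $f^H\colon X^H\to Y^H$ is a bijection on path components and induces an isomorphism on $\pi_1$ at every basepoint. Hypothesis $(3)$ says exactly that, for each $x\colon G/H\to X$, the map $H_*(X^H_x;\Z\Pi X^H(x,-))\to H_*(Y^H_{fx};\Z\Pi Y^H(fx,-))$ is an isomorphism, that is (Definition~\ref{def:fixedsethomology}), that $\tilde X^H_x\to\tilde Y^H_{fx}$ induces an isomorphism on integral homology. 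As these covers are simply connected and have the homotopy type of CW complexes, the homology Whitehead theorem makes this a homotopy equivalence; combined with the $\pi_1$-isomorphism, each $f^H$ is a weak homotopy equivalence. Hence $f$ is a weak $G$-equivalence, and since $X$ and $Y$ have the homotopy types of $G$-CW complexes, the equivariant Whitehead theorem gives that $f$ is a $G$-homotopy equivalence.

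The remaining implication $(2)\Rightarrow(3)$ is where I expect the real work to lie. By the remark after Definition~\ref{def:universalcoeffs}, $\Mackey H^G_*(X)$ is the homology of the cellular chain complex $\Mackey C^G_*(X)$ regarded as a complex of $\stab\Pi_G X$-modules; since $f_*$ induces an equivalence of stable fundamental groupoids, $(2)$ says precisely that the comparison chain map $\Mackey C^G_*(f)$ is a quasi-isomorphism. Both complexes are bounded below and built from free---hence projective---$\stab\Pi_G X$-modules, so $\Mackey C^G_*(f)$ is in fact a chain homotopy equivalence, and any additive functor applied to it remains one. It therefore suffices to produce, for each $x\colon G/H\to X$, an additive functor carrying $\Mackey C^G_*(X)$ to a complex computing $\Mackey H_*(X^*)(x)=H_*(\tilde X^H_x;\Z)$, functorially in $X$ so that it may be applied to $f$. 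The candidate is the geometric-fixed-point/evaluation functor: one evaluates $\Mackey C^G_*(X)$ at $x$ and uses the duality of \cite[\S2.7]{CW:homologybook} identifying represented $\stab\Pi_G X$-modules with free local systems on $X^H$, so that an $H$-fixed $G$-cell of $X$ yields cells of $X^H$ and the morphisms of $\stab\Pi_G X$ restrict to those of $\Pi X^H$. The genuinely delicate point, with no analogue when $G$ is finite, is that one must check both that the transfers present in $\stab\Pi_G X$ contribute nothing spurious to fixed sets and that the $\Lie(G/H)$-dimension shifts in the dual cell structures do not regrade or twist the outcome---in short, that this evaluation produces exactly the untwisted $H_*(\tilde X^H_x;\Z)$. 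This is precisely the homological bookkeeping governed by the duality theory of \cite{CW:homologybook}; with it in place, deducing $(3)$ from $(2)$ is formal.
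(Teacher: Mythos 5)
Your overall architecture agrees with the paper's: prove $(1)\Rightarrow(2)$, $(1)\Rightarrow(3)$, $(3)\Rightarrow(1)$, and $(2)\Rightarrow(3)$. The first three agree in substance with the paper (its $(3)\Rightarrow(1)$ is phrased via the relative Hurewicz theorem on the mapping cones of the $f^H$ rather than the homology Whitehead theorem on universal covers, but these are the same argument). The genuine divergence, and the genuine gap, is in $(2)\Rightarrow(3)$.

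The paper dispatches $(2)\Rightarrow(3)$ in one line: ``universal coefficients and the Atiyah-Hirzebruch spectral sequence.'' Unpacked, this means: by Corollary~\ref{cor:universalisomorphism}, hypothesis $(2)$ already gives isomorphisms in ordinary and dual equivariant homology and cohomology with \emph{every} coefficient system; and fixed-set homology $Y\mapsto H_*(Y^H;\Z\Pi X^H(x,-))$, viewed as a homology theory on $G$-spaces over $X$ by forgetting equivariance (exactly as in the proof of Corollary~\ref{cor:nonequivariantkernel}), admits an Atiyah-Hirzebruch spectral sequence whose $E^2$ page is ordinary equivariant homology with coefficients. By the previous sentence those $E^2$ pages map isomorphically, so the abutments do as well, and the abutment is $\Mackey H_*(X^*)(x)$.

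Your chain-level replacement for this does not close. The functor you posit --- ``evaluate $\Mackey C^G_*(X)$ at $x$ and restrict morphisms to $\Pi X^H$'' --- does not produce a complex computing $H_*(\tilde X^H_x;\Z)$. Evaluating at $x$ yields the complex computing $\Mackey H^G_*(X)(x)$, not fixed-set homology, and more to the point the relationship between $\Mackey C^G_*(X)$ and the cellular chain complex of $X^H$ is not realized by any additive functor: a single $G$-cell $G\times_K D(V)$ of $X$ contributes to the filtration of $X^H$ through $(G/K)^H$, which for compact Lie $G$ generally has homology in many degrees (it is a positive-dimensional manifold), so the filtration quotients of $X^H$ are governed by the full $H_*((G/K)^H;\text{local coefficients})$, not just by $\pi_0$. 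That is precisely what the coefficient systems $\MackeyOp H_q$ in Corollary~\ref{cor:nonequivariantkernel} record, and it is why the comparison is a spectral sequence rather than a chain map. The ``genuinely delicate point'' you flag about transfers and $\Lie(G/H)$-shifts is not a loose end the duality theory silently absorbs --- it is the substance of the AHSS/universal-coefficient argument, and deferring it leaves the key step unproved. Your opening move (that $(2)$ makes $\Mackey C^G_*(f)$ a chain homotopy equivalence) is fine and can be kept; replace the sought ``evaluation functor'' with the observation that a chain homotopy equivalence induces isomorphisms of $E^2$ pages of the Atiyah-Hirzebruch spectral sequence for fixed-set homology, and hence of abutments.
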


\begin{proof}
(1) clearly implies both (2) and (3). 

(2) implies (3) by universal coefficients and the Atiyah-Hirzebruch spectral sequence.

Assuming (3), we have, for every subgroup $H$, that
$f^H\colon X^H\to Y^H$ is, nonequivariantly, an equivalence on fundamental groupoids
and induces an isomorphism on homology with local $\Z\pi_1 X$-coefficients.
Therefore, by the Hurewicz isomorphism for pairs, $f^H$ is a nonequivariant homotopy equivalence
for all $H$, which implies (1).
\end{proof}

%%%%%%%%%%%%%%%%%%%%%%%%%%%%%%%%%
\section{The Surgery Step}

We now define what we mean by surgery on a homotopy element
and examine under what circumstances such surgery is possible.

\begin{definition}\label{def:surgery2}
Let $M$ be a compact $G$-manifold, let $X$ be a $G$-space, and
let $(f,\xi,t)\colon M\to X$ be a normal map (as in Definition~\ref{def:normalmap}).
Let $[\alpha,\beta]\in \bar\pi_{n} f$ be represented by the following diagram:
\[
 \xymatrix{
  G/H\times S^{n-1} \ar[d] \ar[r]^-\alpha & M \ar[d]^f \\
  G/H\times D^n \ar[r]_-\beta & X
 }
\]
To say that {\em we can do surgery on $[\alpha,\beta]$} is to say
first that $\alpha$ is homotopic to a map that can be thickened to an embedding
\[
 \bar\alpha\colon G\times_H(S^{n-1}\times D(W))\to M
\]
in the interior of $M$, for some representation $W$ of $H$
with $|M| = n + |W| + |G/H| - 1$.
Using the contractibility of $D(W)$, we extend $\beta$ to get the following diagram:
\[
 \xymatrix{
  G\times_H (S^{n-1}\times D(W)) \ar[d] \ar[r]^-{\bar\alpha} & M \ar[d]^f \\
  G\times_H (D^n\times D(W)) \ar[r]_-{\bar\beta} & X
 }
\]
We then do surgery on $(\bar\alpha,\bar\beta)$ in the sense of Definition~\ref{def:surgery1}, 
with $f'\colon M'\to X$ being the result of
the surgery and $\bar f\colon N\to X$ its trace.
Orienting $I$ upwards, we have the trivialization
\[
 t\dirsum\Real \colon T(M\times I)\dirsum f^*\xi
   \iso TM\dirsum\Real\dirsum f^*\xi
   \xrightarrow{\iso} U\dirsum\Real.
\]
We then require that we can extend $t\dirsum\Real$ over all of $N$
(possibly after stabilizing further) to get a trivialization
\[
 \bar t\colon TN\dirsum \bar f^*\xi \xrightarrow{\iso} U\dirsum\Real.
\]
Restricting $\bar t$ to the boundary and using the outward normal along $M'$,
we get a trivialization
\[
 t'\colon TM'\dirsum (f')^*(\xi\dirsum\Real) \iso U\dirsum\Real,
\]
hence a normal map $(f',\xi\dirsum\Real,t')\colon M'\to X$.
\end{definition}

The question we address in this section is: Given an element of $\bar\pi_n f$,
under what conditions can we do surgery on it?
We first need some results on destabilization of bundle maps.

\begin{proposition}
Let $\DA = \Real$, $\Cplx$, or $\Qtrn$, and let $d = \dim_\Real \DA$.
Let $Z$ be a nonequivariant CW complex with $n$-dimensional $\DA$-bundles $\zeta$ and $\xi$ and
a bundle isomorphism $f\colon \zeta\dirsum\DA^k\to\xi\dirsum\DA^k$ for some $k\geq 0$.
If $\dim Z < d(n+1) - 1$, then there exists a bundle isomorphism $f'\colon \zeta\to\xi$ such that
$f \hmtpc f'\dirsum\DA^k$.
\end{proposition}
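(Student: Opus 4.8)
The plan is to argue by induction on $k$, with the case $k=1$ carrying all the content; everything there reduces to the connectivity of a single Stiefel sphere. For $k=0$ there is nothing to prove. For $k\geq 2$ I would regard $f$ as a bundle isomorphism $(\zeta\dirsum\DA^{k-1})\dirsum\DA\to(\xi\dirsum\DA^{k-1})\dirsum\DA$ and apply the case $k=1$ to it: this is legitimate because $\zeta\dirsum\DA^{k-1}$ is $(n+k-1)$-dimensional and $\dim Z<d(n+1)-1\leq d(n+k)-1$. This shows $f$ is homotopic, through bundle isomorphisms, to $g\dirsum\DA$ for some isomorphism $g\colon\zeta\dirsum\DA^{k-1}\to\xi\dirsum\DA^{k-1}$. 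The inductive hypothesis applied to $g$ then gives $g\hmtpc g'\dirsum\DA^{k-1}$ with $g'\colon\zeta\to\xi$, and hence $f\hmtpc g'\dirsum\DA^k$, as required.

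So the whole problem comes down to $k=1$: $f\colon\zeta\dirsum\DA\to\xi\dirsum\DA$. The first step is to restrict $f$ to the trivial $\DA$-summand of the source; taking the image of $1\in\DA$ and normalizing produces a section $s$ of the unit sphere bundle $S(\xi\dirsum\DA)\to Z$, whose fiber is $S^{d(n+1)-1}$, while the trivial $\DA$-summand of the target supplies a standard section $s_0$ of the same bundle. Since $\dim Z\leq d(n+1)-2$ and $\pi_i(S^{d(n+1)-1})=0$ for $i\leq\dim Z$, the obstructions to a homotopy through sections from $s$ to $s_0$ — lying in the groups $H^i(Z;\pi_i(S^{d(n+1)-1}))$ (with local coefficients where necessary) — all vanish. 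Next I would lift this homotopy of sections to a homotopy of $f$ through bundle isomorphisms, using that the restriction-to-the-source-$\DA$-summand map, from bundle isomorphisms $\zeta\dirsum\DA\to\xi\dirsum\DA$ to $\DA$-linear bundle monomorphisms $\DA\to\xi\dirsum\DA$, is a fibration and hence has the homotopy lifting property. The upshot is a bundle isomorphism $f_1\hmtpc f$ carrying the trivial $\DA$-summand of the source onto that of the target by the identity of $\DA$.

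By $\DA$-linearity this forces $f_1(v\dirsum w)=a(v)\dirsum(c(v)+w)$ for bundle maps $a\colon\zeta\to\xi$ and $c\colon\zeta\to\DA$, and invertibility of $f_1$ forces $a$ to be an isomorphism. The family $f_t(v\dirsum w)=a(v)\dirsum(t\,c(v)+w)$, $t\in[0,1]$, is then a homotopy through bundle isomorphisms from $f_1$ to $a\dirsum\DA$, so $f\hmtpc a\dirsum\DA$ and we take $f'=a$. This completes the case $k=1$, and with it the induction.

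The hard part will be the bookkeeping in the middle step: arranging matters so that the only topological input is the connectivity of $S^{d(n+1)-1}$, and in particular verifying carefully that a homotopy of sections of the sphere bundle genuinely lifts to a homotopy of $f$ through honest bundle isomorphisms — not merely through bundle maps, and not only up to further stabilization. Once that is in place the algebraic shear argument is routine, and the dimension hypothesis is used exactly once, in the form $\dim Z\leq d(n+1)-2$.
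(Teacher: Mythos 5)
Your proof is correct, but it takes a more elementary route than the paper's. The paper argues in one stroke via classifying spaces: writing $\Gamma(m)$ for $O(m)$, $U(m)$, or $Sp(m)$ as appropriate, each map $B\Gamma(m)\to B\Gamma(m+1)$ has homotopy fiber $S^{d(m+1)-1}$ and hence is $[d(m+1)-1]$-connected, so the composite $B\Gamma(n)\to B\Gamma(n+k)$ is $[d(n+1)-1]$-connected; the Whitehead theorem for mapping spaces out of $Z$ then identifies $[Z,B\Gamma(n)]$ with $[Z,B\Gamma(n+k)]$ (and gives the needed surjectivity one dimension higher), which yields the destabilization of $f$. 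You instead reduce to $k=1$ by induction — a step the classifying-space picture never needs, since it treats all $k$ at once — interpret $f|_{\DA}$ as a section of the sphere bundle $S(\xi\dirsum\DA)$ with fiber the Stiefel sphere $S^{d(n+1)-1}$, homotope that section to the standard one by direct obstruction theory, lift the homotopy through the fibration of section spaces coming from $\Gamma(n)\to\Gamma(n+1)\to S^{d(n+1)-1}$, and finish with an explicit linear shear. Both proofs rest on the identical connectivity estimate and invoke the dimension hypothesis in exactly the same way. The lifting step you flag as delicate is genuine; it is precisely what the classifying-space formulation packages away, and it goes through because $\Gamma(n+1)\to S^{d(n+1)-1}$ is a Lie-group fibration, which induces a Serre fibration on section spaces over the CW complex $Z$. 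Your version is longer but makes the mechanism — especially the shear and the homotopy-lifting — visible, where the paper's two-line argument leaves them implicit.
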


\begin{proof}
Write $\Gamma(m)$ for $O(m)$, $U(m)$, or $Sp(m)$, if $\DA = \Real$, $\Cplx$, or $\Qtrn$,
respectively.
For any $m$,
the fibration $B\Gamma(m) \to B\Gamma(m+1)$ has fiber $S^{d(m+1)-1}$, so is $[d(m+1)-1]$-connected.
Hence the map $B\Gamma(n) \to B\Gamma(n+k)$ is $[d(n+1)-1]$-connected.
It follows from the Whitehead theorem that
$[Z,B\Gamma(n)] \iso [Z,B\Gamma(n+k)]$ if $\dim Z < d(n+1)-1$, from which the proposition follows.
\end{proof}

In the following we use again the notation from Definition~\ref{def:ideal}.

\begin{corollary}\label{cor:destab}
Suppose that $Z$ is a nonequivariant CW complex considered as a $G$-space with trivial action.
Let $\xi$ be a $G$-vector bundle over $Z$, let $V$ and $W$ be representations of $G$,
and suppose that we have a trivialization $c\colon \xi\dirsum W\to V\dirsum W$.
If
\[
 \dim Z < d_A(\innprod{A}{V}{G} + 1) - 1
\]
for all $A\in\Irr(G)$ such that $\innprod{A}{V}{G} > 0$ and
$\innprod{A}{W}{G} > 0$, then there exists a trivialization
$c'\colon \xi\to V$ such that $c\hmtpc c'\dirsum W$.
\end{corollary}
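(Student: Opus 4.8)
The plan is to decompose the $G$-equivariant bundles into isotypical pieces and apply the nonequivariant destabilization proposition to each piece separately. Since $Z$ has trivial $G$-action, a $G$-vector bundle $\xi$ over $Z$ splits canonically as $\xi \iso \Dirsum_{A\in\Irr(G)} \Hom_G(A,\xi)\tensor_{\DA_A} A$, where $\Hom_G(A,\xi)$ is an ordinary $\DA_A$-vector bundle over $Z$ (of rank $\innprod{A}{\xi}{G}$, which is the same as $\innprod{A}{V}{G}$ since $\xi\dirsum W\iso V\dirsum W$). The same applies to the trivial bundles $V$ and $W$, so the trivialization $c\colon \xi\dirsum W\to V\dirsum W$ decomposes as a direct sum over $A\in\Irr(G)$ of $\DA_A$-bundle isomorphisms $c_A\colon \Hom_G(A,\xi)\dirsum \DA_A^{\innprod{A}{W}{G}} \to \DA_A^{\innprod{A}{V}{G}}\dirsum \DA_A^{\innprod{A}{W}{G}}$.

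Next I would apply the previous proposition to each $c_A$. Here $\DA = \DA_A$, $d = d_A$, and $n = \innprod{A}{V}{G}$; the trivial $\DA_A^{\innprod{A}{W}{G}}$ plays the role of $\DA^k$ with $k = \innprod{A}{W}{G}$. The hypothesis needed is $\dim Z < d_A(\innprod{A}{V}{G}+1) - 1$, which is exactly what we have assumed — but only for those $A$ with both $\innprod{A}{V}{G} > 0$ and $\innprod{A}{W}{G} > 0$. For the remaining $A$ the destabilization is either trivial or unnecessary: if $\innprod{A}{W}{G} = 0$ then $c_A$ is already a map $\Hom_G(A,\xi)\to \DA_A^{\innprod{A}{V}{G}}$ with no stabilizing summand to remove; and if $\innprod{A}{V}{G} = 0$ then (since ranks agree after adding $W$) $\Hom_G(A,\xi)\dirsum \DA_A^{\innprod{A}{W}{G}} \iso \DA_A^{\innprod{A}{W}{G}}$ forces $\Hom_G(A,\xi)$ to be stably trivial of rank $0$, hence trivial, and the empty trivialization works. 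So in every case we obtain $c'_A\colon \Hom_G(A,\xi)\to \DA_A^{\innprod{A}{V}{G}}$ with $c_A \hmtpc c'_A\dirsum \DA_A^{\innprod{A}{W}{G}}$.

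Finally I would reassemble: set $c' = \Dirsum_A c'_A\tensor_{\DA_A}\id_A$, a $G$-trivialization $\xi\to V$. The homotopies $c_A\hmtpc c'_A\dirsum\DA_A^{\innprod{A}{W}{G}}$ tensor up and sum to a $G$-homotopy $c\hmtpc c'\dirsum W$, using that tensoring with the fixed representation $A$ and direct sum are continuous functorial operations that carry nonequivariant homotopies to $G$-homotopies. The main obstacle is bookkeeping rather than conceptual: one must be careful that the isotypical decomposition of the given trivialization $c$ really is compatible with these decompositions of source and target (i.e., that a $G$-bundle map between bundles over a trivial $G$-space is determined by and equals the sum of its isotypical components), and that the index condition in the hypothesis is precisely the range in which the nonequivariant proposition applies to each relevant $A$. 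No genuinely hard step arises once the splitting is set up correctly.
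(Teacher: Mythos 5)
Your proof is correct and follows essentially the same route as the paper's: decompose into isotypical components via $\Hom_G(A,-)$, apply the nonequivariant destabilization proposition to each, handle the degenerate cases where $\innprod{A}{V}{G}=0$ or $\innprod{A}{W}{G}=0$ by hand, and reassemble by tensoring with $A$ and summing. You spell out the trivial cases a bit more than the paper does, but the argument is the same.
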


\begin{proof}
Because $G$ acts trivially on $Z$, we can decompose $\xi$ as
\[
 \xi \iso \Dirsum_{A\in\Irr(G)} A\tensor_{\DA_A} \Hom_G(A,\xi).
\]
Writing $c_A = \Hom_G(A,c)$, we get
\[
 c_A\colon \Hom_G(A,\xi) \dirsum \DA_A^{\innprod{A}{W}{G}}
  \xrightarrow{\iso} \DA_A^{\innprod{A}{V}{G}} \dirsum \DA_A^{\innprod{A}{W}{G}}.
\]
By the preceding proposition and our assumption on the dimension of $Z$, there is a bundle map
\[
 c'_A\colon \Hom_G(A,\xi) \xrightarrow{\iso} \DA_A^{\innprod{A}{V}{G}}
\]
such that $c_A \hmtpc c'_A \dirsum \DA_A^{\innprod{A}{W}{G}}$.
(The cases where either $\innprod{A}{V}{G} = 0$ or
$\innprod{A}{W}{G} = 0$ are trivial.)
Tensoring with $A$ and summing over all $A$, we get the $c'$ we claimed.
\end{proof}

This then allows us to prove the following lemma.
We use the notion of ``ideal'' defined in Definition~\ref{def:ideal}.
In the following we use the notation $V_H$ for the orthogonal complement of
$V^H$ in $V$.

\begin{lemma}\label{lem:trivialnormal}
Let $M$ be an ideal $G$-manifold, let $\alpha\colon G/H\times S^{n-1}\to M$ be a $G$-map,
and let $\hat\alpha\colon S^{n-1}\to M^H$ be the (nonequivariant) adjoint map.
Suppose that the image of $\hat\alpha$ lies in the space of points with isotropy exactly $H$.
Write $M^H_{\alpha}$ for the component of $M^H$ containing the image of $\hat\alpha$.
Suppose that we have a stable $H$-trivialization
\[
 c\colon \hat\alpha^*TM\dirsum W \xrightarrow{\iso} V\dirsum W.
\]
If
\[
 n \leq |M^H_{\alpha}| - |WH|,
\]
then there exists a trivialization
\[
 c'\colon (\hat\alpha^*TM)_H - \Lie(G/NH) \xrightarrow{\iso} V_H - \Lie(G/NH)
\]
such that $c_H \hmtpc c'\dirsum \Lie(G/NH)\dirsum W_H$.
\end{lemma}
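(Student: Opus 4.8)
The plan is to deduce the statement from the destabilization result Corollary~\ref{cor:destab}, applied with $H$ in place of $G$ and with base $Z=S^{n-1}$ (carrying the trivial $H$-action). First I would unpack the terms. Since $c$ is $H$-equivariant over a trivially-acted base it splits as $c^H\dirsum c_H$ along the decomposition into the $H$-trivial isotypical summand and its complement, so $c_H\colon (\hat\alpha^*TM)_H\dirsum W_H\xrightarrow{\iso}V_H\dirsum W_H$. Comparing ranks on the fixed summand and using $(\hat\alpha^*TM)^H=\hat\alpha^*T(M^H_\alpha)$ gives $\innprod{\Real}{V}{H}=|V^H|=|M^H_\alpha|$, and cancellation in $RO(H)$ applied to the fibers of $c$ shows $V\iso\tau_x$ as an $H$-representation, where $x$ is the orbit through any point of the image of $\hat\alpha$ (which has isotropy exactly $H$); write $\tau_x=\Lie(G/H)\dirsum S$ with $S$ the slice representation there.

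Next I would describe the bundle $(\hat\alpha^*TM)_H$, i.e., the restriction along $\hat\alpha$ of the normal bundle $\nu$ of $M^H_\alpha$ in $M$. Restricting the identification $M_{(H)}\iso G\times_{NH}M^H_{(H)}$ of the orbit-type stratum to the fiber over $eNH$ exhibits $\nu$ as an $H$-equivariant sum $\nu_{\mathrm{orb}}\dirsum\nu_{\mathrm{sl}}$: here $\nu_{\mathrm{orb}}$, the ``orbit normal'' direction, is the pullback of $T_{eNH}(G/NH)$ along a constant map and hence is the trivial bundle with fiber $\Lie(G/NH)$, while $\nu_{\mathrm{sl}}$ is the normal bundle of $M_{(H)}$ in $M$, with fiber the nontrivial part $S_H$ of the slice. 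Thus $(\hat\alpha^*TM)_H\iso\Lie(G/NH)\dirsum\hat\alpha^*\nu_{\mathrm{sl}}$ with $\hat\alpha^*\nu_{\mathrm{sl}}$ an honest $H$-bundle over $S^{n-1}$, so ``$(\hat\alpha^*TM)_H-\Lie(G/NH)$'' denotes the honest bundle $\hat\alpha^*\nu_{\mathrm{sl}}$. Since $\Lie(G/NH)$ sits inside $\Lie(G/H)\subseteq\tau_x\iso V$ as a summand and $\Lie(G/NH)^H=0$, it is a summand of $V_H$, and ``$V_H-\Lie(G/NH)$'' is the honest $H$-representation $\iso S_H$.

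With these identifications $c_H$ becomes an isomorphism
\[
 \hat\alpha^*\nu_{\mathrm{sl}}\dirsum\bigl(\Lie(G/NH)\dirsum W_H\bigr)
 \xrightarrow{\iso}
 \bigl(V_H-\Lie(G/NH)\bigr)\dirsum\bigl(\Lie(G/NH)\dirsum W_H\bigr),
\]
and I would feed it to Corollary~\ref{cor:destab} with $\xi=\hat\alpha^*\nu_{\mathrm{sl}}$, target representation $V_H-\Lie(G/NH)$, and stabilizing representation $\Lie(G/NH)\dirsum W_H$; its output is exactly a $c'$ with $c_H\hmtpc c'\dirsum\Lie(G/NH)\dirsum W_H$. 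Its hypothesis asks that $n-1=\dim S^{n-1}<d_A\bigl(\innprod{A}{V_H-\Lie(G/NH)}{H}+1\bigr)-1$ for every $A\in\Irr(H)$ appearing in both $V_H-\Lie(G/NH)$ and $\Lie(G/NH)\dirsum W_H$. Such an $A$ is nontrivial, and because $\Lie(G/H)=\Lie(G/NH)\dirsum\Lie(WH)$ with $\Lie(WH)$ $H$-trivial, $\innprod{A}{V_H-\Lie(G/NH)}{H}=\innprod{A}{V}{H}-\innprod{A}{\Lie(G/H)}{H}$. Ideality of $M$ (so $\tau_x$ ideal as an $H$-representation) gives, via Definition~\ref{def:ideal},
\[
 |M^H_\alpha|-|WH|+1=\innprod{\Real}{V}{H}-\innprod{\Real}{\Lie(G/H)}{H}+1
  \leq d_A\bigl(\innprod{A}{V}{H}-\innprod{A}{\Lie(G/H)}{H}+1\bigr),
\]
and combining with the hypothesis $n\leq|M^H_\alpha|-|WH|$ yields $n+1\leq d_A(\innprod{A}{V_H-\Lie(G/NH)}{H}+1)$, which is the inequality needed.

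I expect the main obstacle to be the middle step: correctly isolating the orbit-normal summand $\nu_{\mathrm{orb}}$ of the normal bundle of $M^H_\alpha$ and verifying it is $H$-equivariantly trivial with fiber $\Lie(G/NH)$ over the relevant component, so that the formal difference ``$-\Lie(G/NH)$'' really is an honest bundle and $c_H$ genuinely factors as a stabilization of the kind Corollary~\ref{cor:destab} consumes. Once that splitting and the matching summand of $V_H$ are in hand, the remainder is the elementary dimension count above, where ideality and $n\leq|M^H_\alpha|-|WH|$ do all the work.
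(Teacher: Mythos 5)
Your proof is correct and follows essentially the same route as the paper: identify a trivial $\Lie(G/NH)$ summand of $(\hat\alpha^*TM)_H$ coming from the orbit directions (the paper does this directly via the bundle monomorphism $S^{n-1}\times\Lie(G/H)\hookrightarrow\hat\alpha^*TM$ induced by the orbit map, you via the orbit-type stratum $M_{(H)}\iso G\times_{NH}M^H_{(H)}$, which is the same observation), note $V\iso\tau_x$, restrict $c$ to the $H$-nontrivial isotypical part, and then apply Corollary~\ref{cor:destab} with the same dimension count from ideality and the bound $n\le |M^H_\alpha|-|WH|$.
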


\begin{proof}
The map $\alpha$ induces a bundle monomorphism
$S^{n-1}\times\Lie(G/H) \includesin \hat\alpha^*TM$, by our assumption that all points in the image
of $\hat\alpha$ have isotropy $H$.
Because $\Lie(G/H)_H = \Lie(G/NH)$, we get that
$S^{n-1}\times\Lie(G/H)$ is a subbundle of $(\hat\alpha^*TM)_H$, which we used
in stating the conclusion of the lemma.
Using the isomorphism $c$, restricted to any fiber, we see that $\Lie(G/NH)$ must also
be a subrepresentation of $V$.

The trivialization $c$ restricts to a bundle isomorphism
\begin{multline*}
 c_H\colon ((\hat\alpha^*TM)_H - \Lie(G/NH))\dirsum (\Lie(G/NH)\dirsum W_H) \\
   \xrightarrow{\iso}
   (V_H - \Lie(G/NH)) \dirsum (\Lie(G/NH)\dirsum W_H).
\end{multline*}
We now want to apply Corollary~\ref{cor:destab}.
Let $x$ be a point in the image of $\hat\alpha$ and let $\tau_x$ be the
tangent $H$-representation at $x$.
By assumption, and using that $|WH| = |\Lie(G/H)^H|$, we have
\[
 n - 1 < \innprod{\Real}{\tau_x}{H} - \innprod{\Real}{\Lie(G/H)}{H}
  \leq d_A(\innprod{A}{\tau_x}{H} - \innprod{A}{\Lie(G/H)}{H} + 1) - 1
\]
for $A\neq\Real$. Therefore, with $Z = S^{n-1}$, we can apply
Corollary~\ref{cor:destab} to obtain the conclusion of this lemma.
\end{proof}

Now we can state and prove the main result of this section.

\begin{theorem}\label{thm:surgerypossible}
Let $M$ be an ideal $G$-manifold, let $X$ be a $G$-space, and
suppose that $(f,\xi,t)\colon M\to X$ is a normal map.
Let $[\alpha,\beta]\in \bar\pi_n(f)(x\colon G/H\to M)$,
so $\alpha$ and $\beta$ fit in the following diagram:
\[
 \xymatrix{
  G/H\times S^{n-1} \ar[d]_i \ar[r]^-\alpha & M \ar[d]^f \\
  G/H\times D^n \ar[r]_-\beta & X
 }
\]
Let $M^H_x$ denote the component of $M^H$
containing $x(eH)$
and let $W_x H$ denote the subgroup of $WH$ carrying $M^H_x$ to itself.
Let $m = |M^H_x|$ and let $w = |W_x H| = |WH|$.
Suppose that
\[
 n \leq m - w - 1
\]
and
\[
 n \leq m - w - |(M^H_x)^K|
\]
for every $K$ strictly containing $H$.
Then
\begin{enumerate}
\item
$\alpha$ is homotopic to a map that can be thickened to an immersion
\[
 \gamma\colon G\times_H(S^{n-1}\times D^{m-w-n+1} \times D(W)) \to M
\]
for some $H$-representation $W$ with $W^H=0$, and $\gamma$ is unique up to regular homotopy;
and

\item
if $\gamma$ can be taken to be an embedding, then we can do surgery on $[\alpha,\beta]$.
\end{enumerate}
\end{theorem}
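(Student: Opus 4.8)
The plan is to prove part (1) by a standard immersion-theoretic argument — produce the map with the correct normal data using Hirsch--Smale theory on fixed sets, patched together equivariantly — and then deduce part (2) by verifying that the bundle hypotheses in Definition~\ref{def:surgery2} can be met, using Lemma~\ref{lem:trivialnormal} to destabilize the relevant trivialization.

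First I would address part (1). The adjoint $\hat\alpha\colon S^{n-1}\to M^H_x$ is a map of dimension $n-1$ into a manifold of dimension $m$; since $n \leq m-w-1 \leq m-1 < m$, general position lets us homotope $\hat\alpha$ (equivariantly, by moving the whole $G$-map $\alpha$) so that its image lies in the open stratum of points with isotropy exactly $H$ in $M^H_x$ — here the inequalities $n \leq m-w-|(M^H_x)^K|$ for $K\supsetneq H$ are exactly what guarantees we can push off the lower strata $(M^H_x)^K$, whose codimension in $M^H_x$ is $|(M^H_x)| - |(M^H_x)^K|$. Restricting attention to this open stratum, a neighborhood looks like an open set in a representation of $H$, so the equivariant normal data of $\hat\alpha$ is captured by an $H$-vector bundle over $S^{n-1}$; since $S^{n-1}$ has the homotopy type of a CW complex of dimension $n-1$ and $n-1 < m - w = |M^H_x| - |WH|$ is small, any such bundle is (after adding trivial summands) induced from the tangent data, and in particular splits off a complement of dimension $m-w-n+1$ in the fixed directions together with a normal representation $W$ with $W^H = 0$ transverse to the stratum. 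Hirsch's immersion theorem, applied on the fixed-set stratum and then spread out equivariantly over the orbit tube, produces the immersion $\gamma\colon G\times_H(S^{n-1}\times D^{m-w-n+1}\times D(W))\to M$; uniqueness up to regular homotopy is the uniqueness clause in Hirsch--Smale theory, since the bundle monomorphism data is unique up to homotopy in this dimension range.

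For part (2), assume $\gamma$ is an embedding; then we are in the situation of Definition~\ref{def:surgery1} with $V = \Real^n\dirsum W' $ for the appropriate $H$-representation — more precisely the embedding $\bar\alpha\colon G\times_H(S^{n-1}\times D(W))\to M$ needed there is obtained by restricting $\gamma$. The content to check is that the trivialization $t\colon TM\dirsum f^*\xi\iso U$ extends over the trace $N = M\times I\cup G\times_H D(V)$. The obstruction to extending $t\dirsum\Real$ over the attached handle is that the handle $G\times_H D(V)$ is contractible to $G\times_H *$, over which $TN\dirsum \bar f^*\xi$ is the $G$-bundle $G\times_H(\tau_x\dirsum\Real\dirsum (f^*\xi)_x)$; the two stable trivializations (the one coming from $t$ along $M$, and the one coming from the standard framing of the handle core together with the immersion data of $\gamma$) differ by an element of $\pi_{n-1}$ of the relevant stable orthogonal (or unitary/symplectic) group over $H$, and the idealness hypothesis on $M$ is precisely what forces this obstruction to vanish: this is where Lemma~\ref{lem:trivialnormal} enters, giving the destabilized isomorphism $c'\colon (\hat\alpha^*TM)_H - \Lie(G/NH)\iso V_H - \Lie(G/NH)$, which is the geometric input identifying the framing on the core with the ambient one. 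Once $\bar t$ exists, restricting to $M'$ with the outward normal yields $t'$, hence the normal map $(f',\xi\dirsum\Real,t')$, which is exactly the data Definition~\ref{def:surgery2} asks for.

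The main obstacle is the trivialization extension in part (2): controlling the framing obstruction over the equivariant handle, keeping careful track of the twisting by $\tau_x$ and of the $\Lie(G/H)$ versus $\Lie(G/NH)$ bookkeeping that is invisible when $G$ is finite. This is why the hypothesis that $M$ be ideal (Definition~\ref{def:ideal}) is needed rather than merely a gap hypothesis: the inequality defining idealness, $\innprod{\Real}{V}{K} - \innprod{\Real}{\Lie(G/K)}{K} + 1 \leq d_A(\innprod{A}{V}{K} - \innprod{A}{\Lie(G/K)}{K} + 1)$, is exactly the dimension condition under which Corollary~\ref{cor:destab} applies to kill the destabilization obstruction summand-by-summand over the irreducibles of $H$. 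The rest — general position for part (1), and the homotopy-pushout identification of $N$ — is routine and parallels Wall's argument closely, as the introduction promises.
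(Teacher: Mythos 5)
Your overall shape — general position to land in the open stratum, Hirsch--Smale immersion theory for the thickening, then a framing check for part (2) — matches the paper's, but you have the location of the key step wrong, and this leaves a genuine gap in your part (1) and makes your part (2) misconceived.

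The central issue is where Lemma~\ref{lem:trivialnormal} (hence the ideal hypothesis and Corollary~\ref{cor:destab}) is used. In the paper it is used in \emph{part (1)}: after applying Hirsch's theorem in the $H$-fixed directions to get the $W_xH$-equivariant immersion $\gamma_0$ carrying $S^{n-1}\times D^{m-w-n+1}$, one still has to thicken in the directions normal to $M^H_x$ inside $M$, and this requires an \emph{actual} (not merely stable) $H$-trivialization of $(\hat\alpha^*TM)_H - \Lie(G/NH)$, which is precisely what Lemma~\ref{lem:trivialnormal} supplies. Your part (1) asserts that the normal bundle over $S^{n-1}$ ``splits off a complement of dimension $m-w-n+1$ \ldots together with a normal representation $W$ with $W^H=0$'' on the grounds that $n-1 < m-w$, but this inequality only gives a stable splitting (``after adding trivial summands''). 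Passing from the stable to an honest splitting needs the dimension count over \emph{each} irreducible isotypic summand, which is exactly the ideal condition; omitting this is a real gap — without it you get an immersion of a disk bundle, not of $G\times_H(S^{n-1}\times D^{m-w-n+1}\times D(W))$ as the theorem requires.

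Correspondingly, your part (2) invokes Lemma~\ref{lem:trivialnormal} to kill a framing obstruction in $\pi_{n-1}$ of a stable classical group, but there is nothing to kill: in the paper's argument the immersion $\gamma$ is chosen in part (1) so that the framing it induces on the handle core \emph{agrees} with the one coming from $t$, so the extension of $t\oplus\Real$ over the handle is automatic (the paper flags this as ``not meant to be obvious, but can be checked carefully from the definitions''). There is no residual obstruction to compute. Your framing-obstruction picture is not wrong in spirit, but it duplicates work already done in part (1) and does not match how the hypotheses are actually consumed. One further small inaccuracy: in the general position step, the stratum you must avoid is the $W_xH$-orbit of $(M^H_x)^K$, whose dimension is at most $w + |(M^H_x)^K|$; your description uses the codimension of $(M^H_x)^K$ alone, which would only require the weaker inequality $n \leq m - |(M^H_x)^K|$ rather than the stated $n \leq m - w - |(M^H_x)^K|$.
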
 

\begin{proof}
We start with the first statement.
Consider the following adjoint diagram of nonequivariant maps:
\[
 \xymatrix{
  S^{n-1} \ar[d]_i \ar[r]^-{\hat\alpha} & M^H_x \ar[d]^{f^H}\\
  D^n \ar[r]_-{\hat\beta} & X^H
 }
\]
If $K$ strictly contains $H$, then the $W_x H$-orbit of $(M^H_x)^K$
has dimension no more than $w+|(M^H_x)^K| < m - (n - 1)$.
Because $\hat\alpha$ can meet only finitely many orbit types in $M^H_x$,
it now follows for dimensional reasons that $\hat\alpha$ is homotopic to a map
that misses all fixed points by subgroups strictly larger than $H$.
Hence, we may assume that the image of $\hat\alpha$ lies in $\breve M^{H}_x$, the open submanifold
of $M^H_x$ consisting of points with isotropy exactly $H$.

The trivialization $t$ pulls back to a trivialization
\[
 \alpha^*t\colon \alpha^*TM \dirsum \alpha^*f^*\xi \xrightarrow{\iso} G\times_H U.
\]
Because $D^n$ is contractible, we also have a trivialization
\[
 \alpha^*f^*\xi = i^*\beta^*\xi \xrightarrow{\iso} G\times_H V
\]
for some $H$-representation $V$, so we have
\[
 \alpha^*TM\dirsum (G\times_H V) \xrightarrow{\iso} G\times_H U.
\]
Note that this implies that $U$ contains a copy of $V$ as a subrepresentation.

Restrict to $S^{n-1}$ and take $H$-fixed points, to get
\[
 \hat\alpha^* TM^H \dirsum V^H \xrightarrow{\iso} U^H.
\]
Because $W_x H$ acts freely on $\breve M^H_x$,
the quotient $\breve M^H_x/W_x H$ is again a manifold, and the projection
$q\colon \breve M^H_x \to \breve M^H_x/W_x H$
induces a splitting
\[
 T\breve M^H_x \iso q^*T(\breve M^H_x/W_x H) \dirsum \Lie(WH).
\]
Hence, we have an isomorphism
\[
 \hat\alpha^*q^*T(\breve M^H_x/W_x H) \dirsum \Lie(WH) \dirsum V^H \xrightarrow{\iso} U^H.
\]

On the other hand, consider the nonequivariant manifolds
$S^{n-1}\times D^{m-w-n+1}$ and $D^n\times D^{m-w-n+1}$. 
Because $m-w-n+1\geq 1$, we can find trivializations
\[
 T(S^{n-1}\times D^{m-w-n+1}) \xrightarrow{\iso} \Real^{m-w}
\]
and
\[
 T(D^n\times D^{m-w-n+1}) \xrightarrow{\iso} \Real^{m-w+1}
\]
that are compatible, in the sense that the restriction of the second
to $ S^{n-1}\times D^{m-w-n+1}$ is homotopic to the first
plus addition of the inward-pointing normal.
Comparing dimensions, we can choose a nonequivariant isomorphism
\[
 \Real^{m-w}\dirsum\Lie(WH)\dirsum V^H \iso U^H.
\]
This gives us compatible isomorphisms
\[
 T(S^{n-1}\times D^{m-w-n+1})\dirsum\Lie(WH)\dirsum V^H 
  \xrightarrow{\iso} \Real^{m-w}\dirsum\Lie(WH)\dirsum V^H
  \xrightarrow{\iso} U^H
\]
and
\begin{multline*}
 T(D^n\times D^{m-w-n+1})\dirsum\Lie(WH)\dirsum V^H \\
  \xrightarrow{\iso} \Real^{m-w+1}\dirsum\Lie(WH)\dirsum V^H
  \xrightarrow{\iso} U^H\dirsum\Real.
\end{multline*}

Putting together two of the isomorphisms above, we get
\begin{multline*}
 T(S^{n-1}\times D^{m-w-n+1})\dirsum\Lie(WH)\dirsum V^H \\
  \xrightarrow{\iso}
  \hat\alpha^*q^*T(\breve M^H_x/W_x H) \dirsum \Lie(WH) \dirsum V^H.
\end{multline*}
As in \cite{Wal:surgery}, and using the assumption that
$n-1 \leq m-w-2$,
we can now appeal to \cite{HirMW:Immersions}
to say that $q\hat\alpha$ is homotopic to a map that extends to an immersion
\[
 \bar\gamma\colon S^{n-1}\times D^{m-w-n+1}
  \to \breve M^H_x/W_x H
\]
whose derivative is stably homotopic to the isomorphism just displayed above,
and that $\bar\gamma$ is determined up to regular homotopy by this property.
We can now lift the homotopy of $q\hat\alpha$ to a homotopy of $\hat\alpha$
and use $\bar\gamma$ to thicken in the directions normal to the orbits of $W_x H$,
to get a $W_x H$-immersion
\[
 \gamma_0\colon W_x H\times S^{n-1}\times D^{m-w-n+1} \to M^H_x
\]
whose quotient is $\bar\gamma$ and whose restriction to $W_x H\times S^{n-1}\times 0$
is homotopic to the original $\alpha|(W_x H\times S^{n-1})$.

We've now thickened in the $H$-trivial directions, but it remains to deal
with the non-trivial directions, normal to $M^H_x$.
Our assumptions allow us to apply Lemma~\ref{lem:trivialnormal} to the trivialization
\[
 \hat\alpha^* TM \dirsum V \xrightarrow{\iso} U
\]
to get a trivialization
\[
 (\hat\alpha^*TM)_H - \Lie(G/NH) \xrightarrow{\iso} U_H - V_H - \Lie(G/NH)
\]
stably homotopic to $\hat\alpha^* t_H$.
Let $W = U_H - V_H - \Lie(G/NH)$, so $W^H = 0$.
We can use the trivialization just obtained to thicken $\gamma_0$ in the direction
of $(\hat\alpha^*TM)_H - \Lie(G/NH)$, then extend the action to get the immersion
we claimed:
\[
 \gamma\colon G\times_H(S^{n-1}\times D^{m-w-n+1}\times D(W)) \to M.
\]
The uniqueness of $\gamma$ follows from the uniqueness of $\bar\gamma$.

We now turn to the second statement of the theorem, so assume that $\gamma$ is
an embedding.
Form the trace $N$ using $\gamma$ and extend $f$ to a map $\bar f\colon N\to X$
using $\beta$. We need to show that we can extend the trivialization $t\dirsum\Real$
on $M\times I$ to all of $N$, which is to say, we need to extend it over the handle.
It suffices to extend the $H$-trivialization over
$e\times D^{n}\times D^{m-w-n+1}\times D(W)$.
On this space we have the trivialization
\[
 T(D^{n}\times D^{m-w-n+1})\dirsum\Lie(G/H)\dirsum W\dirsum \hat\beta^*\xi
  \xrightarrow{\iso} U\dirsum\Real
\]
coming from its contractibility. The restriction of this isomorphism is homotopic
to $\Real$ added to a map
\[
 T(S^{n-1}\times D^{m-w-n+1})\dirsum\Lie(G/H)\dirsum W\dirsum \hat\alpha^*f^*\xi
  \xrightarrow{\iso} U.
\]
On the other hand, $\gamma$ was chosen so that this is the same trivialization
induced by $t$.
(This is not meant to be obvious, but can be checked carefully from the definitions
of all of the maps involved.)
Therefore, we can extend the trivialization over the handle
as required.
\end{proof}

\begin{corollary}\label{cor:surgerypossible}
Under the assumptions of Theorem~\ref{thm:surgerypossible}, if we also have
\[
 n \leq \lfloor (m-w+1)/2 \rfloor,
\]
then we can do surgery on $[\alpha,\beta]$.
\end{corollary}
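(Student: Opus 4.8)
The plan is to reduce immediately to Theorem~\ref{thm:surgerypossible}. Since the hypotheses of that theorem are assumed to hold, part~(1) already produces the immersion $\gamma$, and by part~(2) it suffices to show that, under the extra hypothesis $n \leq \lfloor (m-w+1)/2\rfloor$, this $\gamma$ may be taken to be an embedding. The only new arithmetic input needed is the elementary observation that, because $n$ is an integer, $n \leq \lfloor (m-w+1)/2\rfloor$ is equivalent to $2(n-1) \leq m-w-1$, i.e.\ $2(n-1) < m-w$. Thus the core sphere of the handle we are attaching sits strictly below the middle dimension of the relevant fixed manifold, which is exactly the range in which Wall's embedding arguments apply.

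Recall from the proof of Theorem~\ref{thm:surgerypossible} how $\gamma$ was assembled: one first produces a codimension-zero immersion $\bar\gamma\colon S^{n-1}\times D^{m-w-n+1} \to \breve M^H_x/W_xH$ of one $(m-w)$-manifold into another, with $\bar\gamma|_{S^{n-1}\times 0}$ homotopic to $q\hat\alpha$ and with prescribed derivative, and with $\bar\gamma$ unique up to regular homotopy; one then lifts $\bar\gamma$ over the principal $W_xH$-bundle $\breve M^H_x \to \breve M^H_x/W_xH$ to a $W_xH$-immersion $\gamma_0$, thickens in the directions normal to $M^H_x$ in $M$ by a disk $D(W)$ with $W^H=0$, and finally applies $G\times_H(-)$. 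The first step is the only one that can fail to be an embedding.

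Now the restriction of $\bar\gamma$ to $S^{n-1}\times 0$ is an immersion of $S^{n-1}$ into the $(m-w)$-manifold $\breve M^H_x/W_xH$, and since $2(n-1) < m-w$ a general position argument (as in \cite{Wal:surgery}) shows it is regularly homotopic to an embedding; as $\bar\gamma$ is determined only up to regular homotopy, we may assume its core is embedded, and since $\bar\gamma$ is a codimension-zero immersion the normal bundle of the embedded core is the trivial bundle $\underline{\Real}^{m-w-n+1}$, so a sufficiently small product neighborhood $S^{n-1}\times D^{m-w-n+1}$ is carried embeddedly onto a tubular neighborhood. Hence $\bar\gamma$ itself may be taken to be an embedding. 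The remaining thickenings then preserve embeddedness: the lift $\gamma_0$ over the free $W_xH$-action is an embedding onto a $W_xH$-invariant neighborhood for a small enough transverse disk; thickening by $D(W)$ stays an embedding because $M^H_x$ has a tubular neighborhood in $M$ realizing $W$ as a small disk bundle; and applying $G\times_H(-)$ stays an embedding because the proof of Theorem~\ref{thm:surgerypossible}(1) arranged the image of $\hat\alpha$ to lie in $\breve M^H_x$, so all points in sight have isotropy exactly $H$ and $G\times_H(\text{neighborhood}) \to M$ is injective. Thus $\gamma$ may be taken to be an embedding, and Theorem~\ref{thm:surgerypossible}(2) yields that we can do surgery on $[\alpha,\beta]$.

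The one genuinely nontrivial point is the passage from the immersion to an embedding for the core $S^{n-1}$: it is precisely to make this available that we impose $2(n-1)<m-w$. One must also note that, since we remain within the regular homotopy class of $\bar\gamma$ throughout, the prescribed derivative data is preserved, so the trivialization still extends over the handle exactly as in Theorem~\ref{thm:surgerypossible}(2). This is the metastable-range general-position input used nonequivariantly by Wall, and everything else is routine bookkeeping with tubular neighborhoods and the $G\times_H(-)$ construction.
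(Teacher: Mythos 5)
Your proof is correct and takes essentially the same approach as the paper: you convert the dimension hypothesis to $2(n-1)<m-w$, use general position (Wall's metastable argument) to make the core $S^{n-1}\to \breve M^H_x/W_xH$ an embedding within the regular homotopy class of $\bar\gamma$, shrink the normal disk to make $\bar\gamma$ itself an embedding, and then observe that the lift to $M^H_x$, the thickening by $D(W)$, and the application of $G\times_H(-)$ preserve embeddedness. The paper states these last steps more tersely ("using small enough normal discs"), but the content is the same.
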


\begin{proof}
The condition on $n$ is equivalent to $2(n-1) < m - w$. In that case,
the immersion 
$\bar\gamma\colon S^{n-1}\times D^{m-w-n+1} \to \breve M^H_x/W_x H$
constructed in the proof of the theorem is regularly homotopic to a
map that is an embedding on $S^{n-1}\times 0$. By shrinking the disk $D^{m-w-n+1}$
if necessary, we can make $\bar\gamma$ an embedding.
The resulting map $\gamma$ may then also be taken to be an embedding by
using small enough normal discs.
\end{proof}

When considering manifolds with boundary, the preceding results suffice to do surgery
on the interior, leaving the boundary fixed. We can also do surgery on the boundary,
in the form of attaching handles to the boundary (or, what amounts to the same thing,
attaching traces of surgeries on the boundary). The inverse operation to the latter
is handle subtraction, which requires further comment.

Let $(M,\bndry M)$ be a $G$-manifold with nonempty boundary, let $(X,A)$ be a pair of $G$-spaces,
and let $(f,\xi,t)\colon (M,\bndry M)\to (X,A)$ be a normal map.
Represent an element of $\bar\pi_n(f)(x\colon G/H\to M)$
as the class of a quadruple $(\alpha,\beta,\gamma,\delta)$
as in the following diagram:
\[
 \xymatrix@!C=4em{
  & G/H\times S^{n-2} \ar[rr]^\alpha \ar[dl] \ar'[d][dd]
    & & \bndry M \ar[dd]^f \ar[dl] \\
  G/H\times D_-^{n-1} \ar[rr]^(.65)\gamma \ar[dd]
    & & M \ar[dd]^(.3)f \\
  & G/H\times D_+^{n-1} \ar'[r]_-\beta[rr] \ar[dl]
    & & A \ar[dl] \\
  G/H \times D^n \ar[rr]_\delta
    & & X
 }
\]
To say that {\em we can do handle subtraction on $(\alpha,\beta,\gamma,\delta)$}
is to say, first, that $\gamma$ is homotopic to a map that can be thickened to an embedding
\[
 \bar\gamma\colon G\times_H(D_-^{n-1}\times D(V)) \to M
\]
for some representation $V$ with $|M| = n + |V| + |G/H| - 1$,
such that the restriction of $\bar\gamma$ to $G\times_H(S^{n-2}\times D(V))$
is an embedding in $\bndry M$.
Because $\delta$ provides a homotopy of $f\gamma$ to a map into $A$, we can homotope
$f$ so that the image of $\bar\gamma$ lies entirely in $A$.
To do handle subtraction on $(\alpha,\beta,\gamma,\delta)$ then means
to remove the interior of the image of $\bar\gamma$ and smooth corners to get a new manifold
$(M_0,\bndry M_0)$ and a map $f_0\colon (M_0,\bndry M_0)\to (X,A)$,
with the trivialization $t_0 = t|M_0$ making $(f_0,\xi,t_0)$ again a normal map.

\begin{theorem}\label{thm:handlesubtraction}
Let $(M,\bndry M)$ be an ideal $G$-manifold, let $(X,A)$ be a pair of $G$-spaces,
and suppose that $(f,\xi,t)\colon (M,\bndry M)\to (X,A)$ is a normal map.
Let the quadruple $(\alpha,\beta,\gamma,\delta)$ represent an element of $\bar\pi_n(f)(x)$ as above.
Let $M^H_x$ denote the component of $M^H$ containing 
$x$ and let $W_x H$ denote the subgroup of $WH$
carrying $M^H_x$ to itself.
Let $m = |M^H_x|$ and let $w = |W_x H| = |WH|$.
Suppose that
\[
 n \leq m - w - 1
\]
and
\[
 n \leq m - w - |(M^H_x)^K|
\]
for every $K$ strictly containing $H$.
Then $\gamma$ is homotopic to a map that can be thickened to an immersion
\[
 G\times_H(D_-^{n-1}\times D^{m-w-n+1}\times D(W)) \to M
\]
that restricts to an immersion
\[
 G\times_H(S^{n-2}\times D^{m-w-n+1}\times D(W)) \to \bndry M
\]
for some $H$-representation $W$ with $W^H = 0$.
\end{theorem}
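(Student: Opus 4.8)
The proof is a relative version of that of Theorem~\ref{thm:surgerypossible}: the disc $D_-^{n-1}$ plays the role of the sphere $S^{n-1}$, its boundary sphere $S^{n-2}$ is handled exactly as $S^{n-1}$ is handled there, and the neat (relative) immersion theorem of \cite{HirMW:Immersions} replaces the absolute one. First I would pass to $H$-fixed sets and adjoints, so that $\gamma$ becomes a map of pairs $\hat\gamma\colon (D_-^{n-1},S^{n-2})\to (M^H_x,\bndry M^H_x)$ with $\hat\gamma|_{S^{n-2}} = \hat\alpha$. The dimension count from the proof of Theorem~\ref{thm:surgerypossible}, applied first to $\hat\alpha$ within $\bndry M^H_x$ (using $|((\bndry M)^H_x)^K| = |(M^H_x)^K| - 1$, so the two hypotheses on $n$ transfer to the boundary) and then, after extending that homotopy, to $\hat\gamma$ rel $S^{n-2}$ within $M^H_x$, lets me assume the image of $\hat\gamma$ lies in the open submanifold $\breve M^H_x$ of points with isotropy exactly $H$ and that $\hat\alpha$ lands in $\bndry\breve M^H_x$. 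Then $W_xH$ acts freely on $\breve M^H_x$, so $q\colon \breve M^H_x\to\breve M^H_x/W_xH$ is a principal bundle onto an $(m-w)$-manifold with boundary $(\bndry\breve M^H_x)/W_xH$.

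Next I would pull back $t$ and trivialize $\gamma^*f^*\xi\iso G\times_H V$ over the contractible $D_-^{n-1}$, obtaining $\gamma^*TM\dirsum(G\times_H V)\iso G\times_H U$. Taking $H$-fixed points, splitting off $\Lie(WH)$ through $q$, and choosing a dimension-matching isomorphism $\Real^{m-w}\dirsum\Lie(WH)\dirsum V^H\iso U^H$ exactly as in the proof of Theorem~\ref{thm:surgerypossible} produces a formal immersion $F$ of $D_-^{n-1}\times D^{m-w-n+1}$ into $\breve M^H_x/W_xH$ covering $q\hat\gamma$, with restriction $F_\partial$ a formal immersion of $S^{n-2}\times D^{m-w-n+1}$ into the boundary. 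Since $m-w-n+1\geq 1$, the domain has nonempty boundary even though we are in the equidimensional case $\dim(D_-^{n-1}\times D^{m-w-n+1})=m-w=\dim(\breve M^H_x/W_xH)$, so the immersion theorem of \cite{HirMW:Immersions} turns $F_\partial$ into an honest immersion $\bar\gamma_\partial$ with $d\bar\gamma_\partial\simeq F_\partial$. Using such a homotopy and a collar I then deform $F$ near $S^{n-2}\times D^{m-w-n+1}$ so that it equals $d\bar\gamma_\partial$ plus the collar normal, and the relative immersion theorem homotopes $q\hat\gamma$, rel a neighborhood of $S^{n-2}$, to a neat immersion $\bar\gamma\colon D_-^{n-1}\times D^{m-w-n+1}\to\breve M^H_x/W_xH$ agreeing near $S^{n-2}$ with $\bar\gamma_\partial$ (extended by the collar).

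Finally I would lift the homotopy and $\bar\gamma$ to a $W_xH$-immersion of $D_-^{n-1}\times D^{m-w-n+1}$ into $M^H_x$, apply Lemma~\ref{lem:trivialnormal} to $\hat\gamma^*TM\dirsum V\iso U$ (the argument of that lemma applies verbatim with $D_-^{n-1}$ replacing $S^{n-1}$, contractibility of the disc only helping) to thicken in the $H$-nontrivial normal directions by $D(W)$ with $W = U_H - V_H - \Lie(G/NH)$, so $W^H = 0$, and extend the $G$-action to obtain the asserted $G$-immersion $G\times_H(D_-^{n-1}\times D^{m-w-n+1}\times D(W))\to M$; by construction it restricts over $S^{n-2}$ to an immersion $G\times_H(S^{n-2}\times D^{m-w-n+1}\times D(W))\to\bndry M$.

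The step requiring care is the compatibility at the boundary: the formal immersion data on the interior disc must be taken to restrict, near $S^{n-2}\times D^{m-w-n+1}$, to the differential of an actual immersion of the boundary handle, for otherwise the relative $h$-principle does not apply rel the boundary. Trying to build a boundary immersion first and extend it inward could meet an obstruction in $\pi_{n-2}(O(m-w))$; constructing $F$ globally first and only afterward invoking the immersion theorem on $S^{n-2}$ and deforming $F$ to match $d\bar\gamma_\partial$ is what avoids this, after which the trivialization comparison of Theorem~\ref{thm:surgerypossible} carries over unchanged.
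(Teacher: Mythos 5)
Your proof is correct and follows exactly the strategy the paper intends: the paper's own argument is only a two-sentence sketch (``essentially the same as the first part of Theorem~\ref{thm:surgerypossible}, with a compatible stable trivialization on $\alpha^*T(\bndry M)$ and a relative version of the immersion theorem''), and your write-up fills in precisely those details in the same order — passing to $H$-fixed adjoints, pushing off higher isotropy by the dimension count, splitting off $\Lie(WH)$ via the free $W_xH$-action, invoking \cite{HirMW:Immersions} in its relative/neat form, lifting, and thickening in the $H$-nontrivial normal directions via Lemma~\ref{lem:trivialnormal}. Your careful remark about arranging the formal immersion data to restrict near $S^{n-2}\times D^{m-w-n+1}$ to the differential of an actual boundary immersion (rather than trying to extend a boundary immersion inward and meeting a $\pi_{n-2}(O(m-w))$ obstruction) is exactly the content of the paper's phrase ``compatible stable trivialization,'' made explicit.
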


\begin{proof}
The proof is essentially the same as that of the first part of Theorem~\ref{thm:surgerypossible}.
We just have to observe that we can take a stable trivialization of $\alpha^*T(\bndry M)$
compatible with the stable trivialization of $\gamma^*TM$ given by the contractibility
of $D_-^{n-1}$, and use a relative version of the immersion results.
\end{proof}

\begin{corollary}\label{cor:handlesubtraction}
Under the assumptions of Theorem~\ref{thm:handlesubtraction},
if we also have
\[
 n \leq \lfloor (m-w+1)/2 \rfloor,
\]
then we can do surgery on $[\alpha,\beta,\gamma,\delta]$.
\qed
\end{corollary}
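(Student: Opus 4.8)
The plan is to mimic the proof of Corollary~\ref{cor:surgerypossible}, using general position to upgrade the immersion produced by Theorem~\ref{thm:handlesubtraction} to an embedding and then quoting the definition of handle subtraction. First I would note that the hypothesis $n \leq \lfloor (m-w+1)/2 \rfloor$ is equivalent to the strict inequality $2(n-1) < m - w$. This says that the half-disk $D_-^{n-1}$ has dimension less than half that of the ambient quotient manifold $\breve M^{H}_x/W_x H$ of dimension $m-w$; since it also forces $2(n-2) < m - w - 1$, the boundary sphere $S^{n-2}$ is likewise strictly below the middle dimension of $\bndry(\breve M^{H}_x/W_x H)$.

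Theorem~\ref{thm:handlesubtraction} gives, after a homotopy of $\gamma$, an immersion
\[
 G\times_H(D_-^{n-1}\times D^{m-w-n+1}\times D(W)) \to M
\]
restricting to an immersion of $G\times_H(S^{n-2}\times D^{m-w-n+1}\times D(W))$ into $\bndry M$; as in the proofs of that theorem and of Theorem~\ref{thm:surgerypossible}, its content is a nonequivariant immersion $\bar\gamma\colon D_-^{n-1}\times D^{m-w-n+1}\to \breve M^{H}_x/W_x H$ carrying $S^{n-2}\times D^{m-w-n+1}$ to the boundary. I would first use general position to make $\bar\gamma|_{S^{n-2}\times 0}$ an embedding into $\bndry(\breve M^{H}_x/W_x H)$, then general position rel boundary to make $\bar\gamma|_{D_-^{n-1}\times 0}$ a proper embedding of the pair $(D_-^{n-1},S^{n-2})$; the dimension count above puts us strictly below the middle dimension, so no Whitney trick is required. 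Shrinking the factor $D^{m-w-n+1}$ turns $\bar\gamma$ itself into an embedding, and then, exactly as at the end of the proof of Theorem~\ref{thm:surgerypossible}, using sufficiently small normal discs $D(W)$ and re-expanding by the $W_x H$- and $G$-actions produces an embedding
\[
 G\times_H(D_-^{n-1}\times D^{m-w-n+1}\times D(W)) \to M
\]
restricting to an embedding into $\bndry M$ on the $S^{n-2}$ part.

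With $V = \Real^{m-w-n+1}\oplus W$, this is precisely the embedded handle demanded in the definition of handle subtraction preceding Theorem~\ref{thm:handlesubtraction}: the homotopy of $f$ supplied by $\delta$ pushes the image of the handle into $A$, and excising its interior and smoothing corners then produces the desired $(M_0,\bndry M_0)$ and $f_0\colon (M_0,\bndry M_0)\to (X,A)$. Hence we can do handle subtraction on $[\alpha,\beta,\gamma,\delta]$.

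The one step needing care is the relative general-position argument: one must embed $D_-^{n-1}$ in $\breve M^{H}_x/W_x H$ so that its boundary $S^{n-2}$ lies embedded in the boundary of that manifold, i.e., one needs a proper embedding of a pair, obtained by embedding the boundary first and then the interior rel boundary. Because we are strictly below the middle dimension this is ordinary general position (as in \cite{Wal:surgery}) rather than anything requiring the Whitney trick, and the only bookkeeping is to check that the thickening directions $D^{m-w-n+1}$ and $D(W)$ can be shrunk without disturbing the boundary condition. Everything else is formal and identical to the absolute case of Corollary~\ref{cor:surgerypossible}.
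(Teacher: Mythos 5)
Your proposal is correct and follows exactly the route the paper intends: the paper gives this corollary a bare \verb|\qed|, expecting the reader to transcribe the argument of Corollary~\ref{cor:surgerypossible} to the relative setting, and that is precisely what you do. Your numerical check that $n \le \lfloor (m-w+1)/2\rfloor$ is equivalent to $2(n-1) < m-w$ is right, and you correctly identify the one genuinely new point — that one must produce a \emph{proper} embedding of the pair $(D_-^{n-1},S^{n-2})$ in $(\breve M^H_x/W_xH,\,\bndry)$, by first embedding the boundary sphere and then the interior rel boundary via general position below the middle dimension — before shrinking the normal factors $D^{m-w-n+1}$ and $D(W)$ and re-expanding along the group action, exactly as in the absolute case.
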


%%%%%%%%%%%%%%%%%%%%%%%%%%%%%%%%%%%%%%%%%%%%%%%%
\section{Surgery Below the Critical Dimension}

We are now in a position to describe the first, easier part of surgery,
making a given normal map connected up to a certain dimension.

\begin{definition}
We say that a $G$-map $f\colon X\to Y$ is a {\em $\Pi_0$-surjection} if,
for each $H\leq G$,
$f^H\colon X^H\to Y^H$
induces a surjection on sets of components.
\end{definition}

\begin{definition}\label{def:fixeddimensions}
Let $M$ be a $G$-manifold. For a point $x\in M$ with isotropy subgroup $H$,
let $M^H_x$ be the component of $M^H$ containing $x$, and let
\[
 m_x = |M^H_x|
\]
and
\[
 w_x = |WH|.
\]
The {\em critical dimension function} is then
\[
 \mu_x = \lfloor (m_x - w_x + 1)/2 \rfloor.
\]
\end{definition}

%\begin{definition}
%Let $f\colon (M,\bndry M)\to (X,Y)$ be a map from a $G$-manifold with (possibly empty) boundary to a pair of $G$-spaces.
%We say that $f$ is {\em connected below the critical dimension}
%if, first, for each closed subgroup $H$ of $G$, $f$ induces a one-to-one correspondence
%between the components of $M^H$ and $X^H$ and a one-to-one correspondence
%between the components of $\bndry M^H$ and $Y^H$.
%Assuming that true, for each $x\in M$ with isotropy $H$,
%let $X^H_x$ be the component of $X^H$ corresponding to $M^H_x$,
%let $\bndry M^H_x = M^H_x\intersect \bndry M$, and let $Y^H_x = X^H_x\intersect Y$
%(so $\bndry M^H_x$ and $Y^H_x$ need not be connected, nor even nonempty, but the
%restriction $\bndry M^H_x\to Y^H_x$ induces a one-to-one correspondence of components).
%We then require that
%\begin{enumerate}
%\item $f^H_x\colon M^H_x\to X^H_x$ is $\mu_x$-connected if $m_x-w_x$ is even,
%and $(\mu_x-1)$-connected if $m_x-w_x$ is odd;
%
%\item $\bndry f^H_x\colon \bndry M^H_x\to Y^H_x$ is $(\mu_x-1)$-connected
%on each component; and
%
%\item If $\bndry M^H_x \neq \emptyset$, then
%$(f^H_x,\bndry f^H_x)\colon (M^H_x,\bndry M^H_x) \to (X^H_x,Y^H_x)$
%is homologically $\mu_x$-connected.
%
%\end{enumerate}
%\end{definition}

\begin{definition}\label{def:fixedcomponents}
Let $(f,\bndry f)\colon (M,\bndry M)\to (X,A)$ be a map from a $G$-manifold with (possibly empty) boundary to a pair of $G$-spaces.
Suppose that both $f$ and $\bndry f$ are $\Pi_0$-equivalences.
For each $x\in M$ with isotropy $H$,
let $X^H_x$ be the component of $X^H$ corresponding to $M^H_x$,
let $\bndry M^H_x = M^H_x\intersect \bndry M$, and let $A^H_x = X^H_x\intersect A$
(so $\bndry M^H_x$ and $A^H_x$ need not be connected, nor even nonempty, but the
restriction $\bndry M^H_x\to A^H_x$ induces a one-to-one correspondence of components).
\end{definition}

We will want to apply Theorems~\ref{thm:surgerypossible} and~\ref{thm:handlesubtraction}
and their corollaries. In order to be sure that we satisfy the assumptions of those results,
we must assume a gap hypothesis, as is common and recognized to be necessary
for equivariant surgery.

\begin{definition}
Let $\tau$ be a representation of $\Pi_G X$ for a $G$-space $X$.
For $x\colon G/H\to X$, write $\tau(x) = G\times_H \tau_x$.
\begin{enumerate}
\item
We say that $\tau$ {\em satisfies the gap hypothesis}
if, for every $x\colon G/H\to X$ and every $K\leq H$, we have
either that $\tau_x^K = \tau_x^H$ or
\[
 |\tau_x^K| - |WK| \geq 2|\tau_x^H|.
\]

\item
We say that $\tau$ {\em has orbit spaces of dimension at least $n$} if
$|\tau_x^H| - |WH| \geq n$ for all injective maps $x\colon G/H\to X$.
\end{enumerate}
We say that a manifold $M$ satisfies the gap hypothesis or has orbit spaces of
dimension at least $n$ if the same is true of its tangent representation $\tau$.
\end{definition}

The following result shows what we can achieve by ``surgery below the critical dimension.''
It generalizes \cite[1.4]{Wal:surgery} and its proof will be similar.
The homological connectivity referred to in the conclusion is with respect
to the fixed-point homology defined in Definition~\ref{def:fixedsethomology}.
The assumption of $\Pi_0$-surjectivity will be satisfied automatically
in the case of a degree one map.
When $f$ satisfies the conclusion of the theorem, we say that it is
{\em connected up to the critical dimension}.

\begin{theorem}\label{thm:relativesurgerybelow}
Let $M$ be a compact $G$-manifold satisfying the gap hypothesis
and having orbit spaces of dimension at least three,
and let $(X,A)$ be a pair of finite $G$-CW complexes.
Let $(f,\xi,t)$ be a normal map with $(f,\bndry f)\colon (M,\bndry M)\to (X,A)$
such that both $f$ and $\bndry f$ are $\Pi_0$-surjections.
Then we can perform a finite sequence of equivariant surgeries on $M$
so that, after surgery, for each $x$ with isotropy $H$,
\begin{enumerate}
\item $f^H_x\colon M^H_x\to X^H_x$ is $\mu_x$-connected if $m_x-w_x$ is even,
or $(\mu_x-1)$-connected if $m_x-w_x$ is odd;

\item $\bndry f^H_x\colon \bndry M^H_x\to A^H_x$ is $(\mu_x-1)$-connected
on each component; and

\item If $\bndry M^H_x \neq \emptyset$, then
$(f^H_x,\bndry f^H_x)\colon (M^H_x,\bndry M^H_x) \to (X^H_x,A^H_x)$
is homologically $\mu_x$-connected.

\end{enumerate}
\end{theorem}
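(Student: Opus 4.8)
The plan is to run the argument of \cite[1.4]{Wal:surgery} one orbit type at a time, with an outer induction over isotropy types and an inner induction over the connectivity level. Since $M$ is compact there are only finitely many conjugacy classes $(H_1),\dots,(H_r)$ of isotropy subgroups, and we may index them so that $H_j$ subconjugate to $H_i$ forces $i\le j$. The geometric fact driving the outer induction is that a surgery along an embedded handle $G\times_H\bigl(D(V)\times D(W)\bigr)$ in the interior of $M$, modeled on an orbit of type $G/H$, alters $M^K$ and $f^K$ only for $K$ subconjugate to $H$, and not at all for $K$ not subconjugate to $H$ nor on $\bndry M$. So if we process the types in the order $(H_1),(H_2),\dots$, the surgeries of stage $i$ (all along orbits of type $G/H_i$) leave untouched every fixed set $M^{H_j}$, $j<i$, already put in the desired form; the ``bad'' subsets $(M^{H_i}_x)^K$ with $K\supsetneq H_i$ that we must avoid during stage $i$ belong to such earlier types and are likewise undisturbed. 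Hence it suffices to carry out one stage: assuming conclusions (1)--(3) for all earlier isotropy types, arrange them for all $x$ with isotropy exactly $H:=H_i$ using finitely many surgeries and handle subtractions along orbits of type $G/H$.

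Fix such an $H$ and work componentwise on the compact fixed manifold $M^H$; for a component $M^H_x$ with residual group $W_xH$ abbreviate $m=m_x$, $w=w_x$, $\mu=\mu_x$. After preliminary surgeries in dimensions $0$ and $1$ (available since the orbit spaces have dimension at least three, so $\mu\ge 2$) we may assume $f^H_x$ and $\bndry f^H_x$ are $1$-connected on each component. Now induct on $k$: if $f^H_x$ is $k$-connected with $k<\lfloor(m-w)/2\rfloor$, then $\bar\pi_{k+1}(f)$ restricted to the objects over $G/H$ is finitely generated; this rests on the finiteness of $M$, $X$ and $A$ and on the nonequivariant fact that $\pi_{k+1}$ of a $k$-connected map of finite complexes is finitely generated over the fundamental group ring, the only compact-Lie novelty being that the identity component of each automorphism group $\Aut(x)$, being connected, acts trivially on homotopy groups, so that the $\Z\pi_0\Aut(x)$-module structure on $\pi_{k+1}$ is still finitely generated. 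For each generator, represented by an $(\alpha,\beta)$ as in Theorem~\ref{thm:surgerypossible}, the gap hypothesis supplies the inequalities $n\le m-w-|(M^H_x)^K|$ for $K\supsetneq H$, and together with ``orbit spaces of dimension at least three'' and $n=k+1\le\lfloor(m-w)/2\rfloor$ it supplies $n\le m-w-1$ and $n\le\lfloor(m-w+1)/2\rfloor$; so Corollary~\ref{cor:surgerypossible} lets us do surgery on it. By Corollary~\ref{cor:surgeryKillsHomotopy} the result $f'$ has $\bar\pi_j f'\iso\bar\pi_j f$ for $j\le k$ and $\bar\pi_{k+1}f'$ equal to the quotient of $\bar\pi_{k+1}f$ by the submodule generated; killing a generating set makes $(f')^H_x$ $(k+1)$-connected. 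Iterating raises the connectivity of $f^H_x$ to $\lfloor(m-w)/2\rfloor$ --- which is $\mu$ when $m-w$ is even and $\mu-1$ when $m-w$ is odd --- giving conclusion (1).

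Interleaved with (or done before) the above is the same inductive process applied to $\bndry f$ via surgeries on handles attached in the interior of $\bndry M$, in the sense of Definition~\ref{def:surgery1} applied to $\bndry M$. Since $\bndry M^H_x$ has orbit-space dimension $m-w-1$, the argument makes each component of $\bndry f^H_x$ into a $\lfloor(m-w-1)/2\rfloor=(\mu-1)$-connected map --- using the correspondence of components from Definition~\ref{def:fixedcomponents} --- which is conclusion (2); being supported in the interior of $\bndry M$, these surgeries commute with the interior surgeries and leave earlier isotropy types alone. For conclusion (3), when $\bndry M^H_x\ne\emptyset$, the long exact sequence of the square $\Phi$ formed by $f^H_x$ and $\bndry f^H_x$ together with (1) and (2) forces $\bar\pi_j\Phi=0$ for $j\le\mu-1$, and also for $j=\mu$ when $m-w$ is even, so by the Hurewicz theorem $\Phi$ is homologically $\mu$-connected in that case. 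When $m-w$ is odd, $\bar\pi_\mu\Phi$ --- identified by Hurewicz (as $\Phi$ is $(\mu-1)$-connected) with the fixed-set homology $\Mackey H_\mu(\Phi)$ --- may be nonzero; it is finitely generated, and we kill its generators, represented by quadruples $(\alpha,\beta,\gamma,\delta)$ as in Theorem~\ref{thm:handlesubtraction}, by handle subtraction. This is legitimate: the numerical hypothesis of Corollary~\ref{cor:handlesubtraction} now reads $n=\mu\le\lfloor(m-w+1)/2\rfloor=\mu$, and --- exactly as in the nonequivariant bounded case of \cite[\S1]{Wal:surgery} --- since the boundary of each such class already vanishes in $\bar\pi_{\mu-1}(\bndry f)$ by (2), the handle subtraction preserves conclusions (1) and (2). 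As each stage uses finitely many surgeries and there are finitely many isotropy types, the whole procedure is a finite sequence of equivariant surgeries.

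The main obstacle is the bookkeeping rather than a single hard lemma. Two things need care: that the gap hypothesis and ``orbit spaces of dimension at least three'' translate into precisely the numerical conditions of Theorems~\ref{thm:surgerypossible} and~\ref{thm:handlesubtraction} for every $n$ up to the relevant connectivity level (a routine but fussy comparison of the dimensions of $(M^H_x)^K$ with $K\supsetneq H$ against $m-w$ and $\mu$), and that the surgeries can be ordered --- top-down over isotropy types in the subconjugacy order, bottom-up over dimension, with the boundary and handle-subtraction steps slotted in --- so that no step destroys earlier progress. The second point is where the equivariant setting genuinely complicates Wall's argument, and it is handled by the restricted effect of a $G/H$-handle on fixed sets noted at the outset, by the fact that interior surgery leaves the boundary untouched, and by the controlled effect of handle subtraction at the critical dimension.
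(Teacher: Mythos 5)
Your proof follows the same overall strategy as the paper's: a double induction (top-down over isotropy types in the subconjugacy order, bottom-up in dimension), killing homotopy generators via Corollaries~\ref{cor:surgerypossible} and~\ref{cor:surgeryKillsHomotopy}, attaching boundary traces, and handle subtraction when $m_x-w_x$ is odd. The one place you diverge meaningfully is the handle-subtraction step, and it is also the place where your argument is thinnest. You take generators of $\bar\pi_\mu\Phi\cong H_\mu(\Phi)$ directly, appeal to Corollary~\ref{cor:handlesubtraction} to realize them by embedded handles, and then assert that since the boundary classes already vanish in $\bar\pi_{\mu-1}(\bndry f)$, the subtraction preserves conclusions (1) and (2). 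The paper instead starts from a finite generating set of the \emph{absolute} group $\pi_\mu(f^H_x)$ (using Hurewicz and the surjection $H_\mu(f^H_x)\to H_\mu(f^H_x,\bndry f^H_x)$), represents it by disjoint embedded spheres in the interior of $\breve M^H_x/W_xH$ via Theorem~\ref{thm:surgerypossible}, tubes those spheres to the boundary to manufacture the relative framed discs, and then justifies the effect of the subtraction by an explicit excision argument comparing $(f^H_x,\bndry f^H_x)$ to $(f^H_x,g)$ through $H_*(Q^H_x,Q^H_x\cap\bndry M^H_x)$, plus an application of Corollary~\ref{cor:surgeryKillsHomotopy} to the induced surgery on $\bndry M^H_x$. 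Corollary~\ref{cor:handlesubtraction} only tells you the handle subtraction can be \emph{performed}; it does not by itself tell you what it does to the kernel, so the excision computation (or an equivalent argument) is not optional and should be spelled out rather than referred to Wall. Your observation that the identity component of $\Aut(x)$ acts trivially on higher homotopy, so finite generation over $\Z\pi_1(X^H_x)$ yields finite generation over $\Z\pi_0\Aut(x)$, is a useful explication that the paper leaves implicit, and your verification of the numerical constraints coming from the gap hypothesis is accurate.
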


\begin{proof}
Throughout this proof, $\mu_x$ denotes the critical dimension function for $M$, not $\bndry M$.

We proceed by induction on the isotropy groups of $M$.
Fix an isotropy group $H$ of $M$ and suppose that
$(f^K,\bndry f^K)$ has the desired connectivity for every $K$ strictly containing $H$.
Fix an $x\colon G/H\to M$ with isotropy $H$.
Suppose that $1\leq n\leq \mu_x-1$ and, by induction on $n$, that
$\pi_k(\bndry f^H_x) = 0$ for $k < n$.
We have that $\pi_n(\bndry f^H_x)$ is a finitely generated $\Z\pi_0\Aut(x)$-module
via the relative Hurewicz isomorphism and the fact that $M$ and $X$ are finite complexes.
(When $n=1$, $\pi_1(\bndry f^H_x)$ is a finite union of $\pi_0\Aut(x)$-orbits;
when $n=2$, $\pi_2(\bndry f^H_x)$ is a finite product of
quotients of $\pi_0\Aut(x)$.)
Our assumptions on $M$ ensure that the assumptions of
Corollary~\ref{cor:surgerypossible} are satisfied for $\bndry M\to A$ so that we can do surgery
on a generator of $\pi_n(\bndry f^H_x)$.
Corollary~\ref{cor:surgeryKillsHomotopy} then applies to tell us
that the surgery kills that generator and hence reduces the number of generators.
Proceeding, we can kill all of the generators by a finite number of surgeries,
leaving $\pi_n(\bndry f^H_x) = 0$.
Moreover, these surgeries do not affect the fixed sets $\bndry M^K$ for $K$ strictly containing $H$,
so do not change the connectivity of $\bndry f^K$.
Nor do they introduce any new isotropy.
Proceeding by induction on $n$, we see that we can
make $\bndry f^H_x$ have the desired connectivity after a finite number of surgeries.
Stacking the traces of these surgeries together gives a normal bordism $N$ from the
original $\bndry M$ to a new manifold, say $P$. We attach $N$ to $M$ along $\bndry M$
so the resulting normal map to $(X,A)$ has the correct connectivity on the boundary
piece $P^H_x$.
To simplify notation, we call this manifold $(M,\bndry M)$ again.

By a similar argument, we can do surgery in the interior of $M$, leaving $\bndry M$
and $M^K$ unchanged for $K$ strictly containing $H$, to make
$f^H_x\colon M^H_x\to X^H_x$ be $\mu_x$-connected if $m_x-w_x$ is even, or
$(\mu_x-1)$-connected if $m_x-w_x$ is odd;
the difference comes in the step where we apply Corollary~\ref{cor:surgeryKillsHomotopy}.
Again, calling the new manifold by the old name, we may now assume that both
$f^H_x$ and $\bndry f^H_x$ have the desired connectivity.

If $m_x-w_x$ is even, the long exact sequence in homology
implies that $(f^H_x,\bndry f^H_x)$
is homologically $\mu_x$-connected as desired.
However, when $m_x-w_x$ is odd and $\bndry M^H_x$ is nonempty, we can conclude only that
$(f_x^H,\bndry f_x^H)$ is $(\mu_x-1)$-connected.
In this case we can do a further surgery (actually, a handle subtraction)
to make $(f_x^H,\bndry f_x^H)$ be $\mu_x$-connected, as we now explain.

Note that $H_{\mu_x}(f^H_x)\to H_{\mu_x}(f^H_x,\bndry f^H_x)$ is surjective
because the next group in the long exact sequence is 0.
Further, the Hurewicz map $\pi_{\mu_x}(f^H_x)\to H_{\mu_x}(f^H_x)$ is an isomorphism
and the latter group is a finitely generated $\Z\pi_0\Aut(x)$-module
because the spaces involved are finite complexes.
Take a finite generating set of $\pi_{\mu_x}(f^H_x)$
and apply Theorem~\ref{thm:surgerypossible} to represent
these generators as diagrams
\[
 \xymatrix{
	G\times_H (S^{\mu_x-1}\times D^{\mu_x}\times D(W)) \ar[r]^-{\bar\alpha} \ar[d]
	  & M \ar[d]^f \\
	G\times_H (D^{\mu_x}\times D^{\mu_x}\times D(W)) \ar[r]_-{\bar\beta}
	  & X
 }
\]
in which the maps $\bar\alpha$ are embeddings in the interior of $M$. 
Recalling that the argument looks at the projections
$S^{\mu_x-1}\to \breve M^H_x/W_xH$ and turns them into embeddings, 
we see that we can arrange that these
embeddings are pairwise disjoint.
Further, we can connect each sphere to a component of $\bndry \breve M^H_x/W_xH$ via a tube
(and keep these all disjoint). 
Going back up to $M^H_x$, we then have a disjoint collection of 
embedded, framed discs with boundaries
in $\bndry M^H_x$:
\[
 \xymatrix{
  W_xH\times S^{\mu_x-2}\times D^{\mu_x} \ar[r] \ar[d]
   & \bndry M^H_x \ar[d] \\
  W_xH\times D^{\mu_x-1}\times D^{\mu_x} \ar[r]
   & M^H_x.
 }
\]
The trivializations of the normal bundles in $M$ extend over the tubes to give us
a collection of embeddings of the following form:
\[
 \xymatrix{
  G\times_H (S^{\mu_x-2}\times D^{\mu_x}\times D(W)) \ar[r] \ar[d]
   & \bndry M \ar[d] \\
  G\times_H (D^{\mu_x-1}\times D^{\mu_x}\times D(W)) \ar[r]
   & M.
 }
\]
Let $Q$ denote the union of the images of these framed discs, or handles, in $M$ and
let $\bar M$ denote the result of removing these handles, i.e., removing
the interior of $Q$ from $M$ and then smoothing corners.
Using $\bar\beta$ together with the interiors of the added tubes, we can
homotope $f$ so that $f(Q)\subset A$, so that we get a normal map
\[
 (\bar f, \bndry\bar f)\colon (\bar M,\bndry\bar M)\to (X,A).
\]
We claim that $(\bar f^H_x, \bndry\bar f^H_x)$ has the required connectivity.

Define another map of pairs,
\[
 (f^H_x,g)\colon (M^H_x, Q^H_x\union\bndry M^H_x) \to (X^H_x, A^H_x).
\]
Consider the homology long exact sequence associated to the map
$(f^H_x, \bndry f^H_x) \to (f^H_x,g)$, where
we use the induced local coefficient system from $M^H_x$ throughout.
The third term is, with a shift of two in grading,
\[
 H_*(Q^H_x\union\bndry M^H_x,\bndry M^H_x) \iso H_*(Q^H_x, Q^H_x\intersect \bndry M^H_x)
\]
by excision.
Excision also allows us to identify
\[
 H_*(f^H_x,g) \iso H_*(\bar f^H_x, \bndry\bar f^H_x),
\]
so the long exact sequence takes the form
\begin{multline*}
\cdots \to
       H_{k-1}(Q^H_x, Q^H_x\intersect \bndry M^H_x) \to
       H_k(f^H_x, \bndry f^H_x) \to \\
       H_k(\bar f^H_x, \bndry\bar f^H_x) \to
       H_{k-2}(Q^H_x, Q^H_x\intersect \bndry M^H_x) \to \cdots
\end{multline*}
Now, the pair $(Q^H_x, Q^H_x\intersect \bndry M^H_x)$ is homotopy equivalent
to a disjoint union of pairs of the form $(W_xH\times D^{\mu_x-1}, W_xH\times S^{\mu_x-2})$,
so
$H_k(Q^H_x, Q^H_x\intersect \bndry M^H_x) = 0$ for $k<\mu_x-1$,
hence
\[
 H_k(\bar f^H_x, \bndry\bar f^H_x) \iso H_k(f^H_x, \bndry f^H_x) = 0
 \quad\text{for $k\leq \mu_x-1$.}
\]
At $\mu_x$ we thus have the following exact sequence:
\[
 H_{\mu_x-1}(Q^H_x, Q^H_x\intersect \bndry M^H_x)
  \to H_{\mu_x}(f^H_x, \bndry f^H_x)
  \to H_{\mu_x}(\bar f^H_x, \bndry\bar f^H_x)
  \to 0.
\]
The handles $Q^H_x$ were chosen to map to a set of generators of
$H_{\mu_x}(f^H_x, \bndry f^H_x)$, so the first map in this sequence is surjective,
hence $H_{\mu_x}(\bar f^H_x, \bndry\bar f^H_x) = 0$ as required.

On the boundary, we have performed a $(\mu_x-1)$ surgery on $\bndry M^H_x$,
so Corollary~\ref{cor:surgeryKillsHomotopy} tells us that $\bndry \bar f^H_x$ remains
$(\mu_x-1)$-connected. It follows from the long exact homology sequence that
$\bar f^H_x$ also remains $(\mu_x-1)$-connected.

Induction on $H$ then completes the proof.
\end{proof}

%%%%%%%%%%%%%%%%%%%%%%%%%%%%%%%%%%%%
\section{The Surgery Kernel}

We now suppose that we have a normal map that is connected up to the critical dimension
and examine the surgery kernel.

\begin{definition}
Let $(M,\bndry M)$ be a compact $G$-manifold,
let $(X,\bndry X)$ be a $G$-Poincar\'e duality space of dimension $\tau$,
and let $(f,\bndry f)\colon (M,\bndry M)\to (X,\bndry X)$ be
a degree-one map.
The {\em equivariant surgery kernels of $(f,\bndry f)$} are defined by
\begin{align*}
 K^G_*(f;\MackeyOp S) &= H^G_{*+1}(f;\MackeyOp S) \\
 K^G_*(f,\bndry f;\MackeyOp S) &= H^G_{*+1}(f,\bndry f;\MackeyOp S) \\
 \K^G_*(f;\Mackey T) &= \H^G_{*+1}(f;\Mackey T) \quad\text{ and}\\
 \K^G_*(f,\bndry f;\Mackey T) &= \H^G_{*+1}(f,\bndry f;\Mackey T)
\end{align*}
for any coefficient systems $\MackeyOp S$ and $\Mackey T$.
The {\em equivariant surgery cokernels of $(f,\bndry f)$} are defined similarly by
\begin{align*}
 K_G^*(f;\Mackey T) &= H_G^{*+1}(f;\Mackey T) \\
 K_G^*(f,\bndry f;\Mackey T) &= H_G^{*+1}(f,\bndry f;\Mackey T) \\
 \K_G^*(f;\MackeyOp S) &= \H_G^{*+1}(f;\MackeyOp S) \quad\text{ and}\\
 \K_G^*(f,\bndry f;\MackeyOp S) &= \H_G^{*+1}(f,\bndry f;\MackeyOp S).
\end{align*}
Note that the kernels and cokernels are graded on representations of $\Pi_G X$.
We shall write $\Mackey K^G_*(f)$ and so on for the $\stab\Pi_G X$-modules obtained
by taking universal coefficients, as in Definition~\ref{def:universalcoeffs}.

We also have the nonequivariant (integer-graded) kernels and cokernels
\begin{align*}
 K_*(f) &= H_{*+1}(f) \\
 K_*(f,\bndry f) &= H_{*+1}(f,\bndry f) \\
 K^*(f) &= H^{*+1}(f) \text{ and}\\
 K^*(f,\bndry f) &= H^{*+1}(f,\bndry f),
\end{align*}
which we will always take with local coefficients
$\Z\Pi X(x,-)$ for some $x\colon G/e\to X$, as in
Definition~\ref{def:fixedsethomology}.
\end{definition}

As nonequivariantly, and by the same argument, we have the following diagram
of split short exact sequences, in which the two right-most vertical arrows
are Poincar\'e duality isomorphisms, implying that the left-most vertical arrow
is also an isomorphism:
\[
 \xymatrix{
  0 & \K_G^{\tau-\alpha}(f;\MackeyOp S) \ar[l] \ar[d]
    & \H_G^{\tau-\alpha}(M;\MackeyOp S) \ar[l] \ar[d]_\iso
    & \H_G^{\tau-\alpha}(X;\MackeyOp S) \ar[l]_{f^*} \ar[d]_\iso
    & 0 \ar[l] \\
  0 \ar[r] & K^G_\alpha(f,\bndry f;\MackeyOp S) \ar[r]
    & H^G_\alpha(M,\bndry M;\MackeyOp S) \ar[r]_{f_*}
    & H^G_\alpha(X,\bndry X;\MackeyOp S) \ar[r]
    & 0
 }
\]
There are similar diagrams with the roles of $f$ and $(f,\bndry f)$ reversed, and with the
roles of $K$ and $\K$ reversed.
Thus, the kernels and cokernels exhibit Poincar\'e duality.

Suppose now that we have done surgery below the critical dimension,
so $(f,\bndry f)$ is connected up to the critical dimension.
Assuming that we are in a situation where we will be doing induction on isotropy groups,
consider a point $x\in M$ with isotropy $H$ and suppose that
$f^K$ and $\bndry f^K$ are homotopy equivalences for all $K$ strictly containing $H$.
We look at $(f^H_x,\bndry f^H_x)\colon (M^H_x,\bndry M^H_x)\to (X^H_x,\bndry X^H_x)$,
a $W_xH$-map that is a homotopy equivalence on all proper fixed sets.

To simplify notation, we recast the situation as follows.
Consider a compact Lie group $W$, a compact $W$-manifold $(M,\bndry M)$,
a $W$-Poincar\'e duality pair $(X,\bndry X)$, and a degree-one map
$(f,\bndry f)\colon (M,\bndry M)\to (X,\bndry X)$.
Let $m = |M|$, $w = |W|$, and $\mu = \lfloor (m-w+1)/2 \rfloor$.
We assume that $f^K$ and $\bndry f^K$ are homotopy equivalences for all nontrivial subgroups
and that
\begin{enumerate}
\item
$f$ is nonequivariantly $\mu$-connected if $m-w$ is even, and $(\mu-1)$-connected if $m-w$ is odd;
\item
$\bndry f$ is nonequivariantly $(\mu-1)$-connected; and
\item
if $\bndry M\neq \emptyset$, then $(f,\bndry f)$ is nonequivariantly homologically $\mu$-connected.
\end{enumerate}
Under these assumptions we want to examine the equivariant and nonequivariant surgery kernels.

\begin{theorem}\label{thm:equivariantkernel}
With the assumptions above we have
\begin{align*}
 K^W_n(f,\bndry f;\MackeyOp S) &= 0 \text{ and}\\
 K_W^n(f,\bndry f;\Mackey T) &= 0
\end{align*}
for all integers $n\neq \mu$ and all coefficient systems $\MackeyOp S$ and $\Mackey T$.
Further, $\Mackey K^W_\mu(f,\bndry f)$ is a finitely generated,
stably free $\stab\Pi_G X$-module with $W$-free stable basis.
\end{theorem}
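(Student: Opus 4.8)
The plan is to adapt Wall's argument from \cite[\S2]{Wal:surgery}, with $\stab\Pi_W X$-modules in place of his $\Z\pi_1$-modules. The three new equivariant inputs are: the surgery kernel is computed by a chain complex of free $\stab\Pi_W X$-modules \emph{with a $W$-free basis}; equivariant Poincar\'e--Lefschetz duality for the kernels (the square of split short exact sequences displayed immediately before the theorem); and Lemma~\ref{lem:stablyfree}.

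First I would build the kernel complex. Give $M$ and $X$ dual $W$-CW($\tau$) structures (the one on $M$ coming from a $W$-triangulation, as in Section~1) and make $(f,\bndry f)$ cellular. Since $f^K$ and $\bndry f^K$ are homotopy equivalences for every nontrivial $K\leq W$, and $f$, $\bndry f$ are nonequivariantly $(\mu-1)$-connected, the pair $(X,\bndry X)$ is $W$-homotopy equivalent to $(M,\bndry M)$ with cells $W\times D(V)$ attached in dimensions $\geq\mu$: such \emph{free} cells meet no fixed set $M^K$ with $K\neq e$, so they leave the nontrivial fixed sets undisturbed, while nonequivariant connectivity lets the relative cells be taken in dimension $\geq\mu$. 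Only finitely many are needed, as $M$ and $X$ are finite complexes. The relative cellular chain complex of the cone of $(f,\bndry f)$, reindexed down by one, is then a finitely generated chain complex $\Mackey D_*$ of free $\stab\Pi_W X$-modules with a $W$-free basis, concentrated in degrees $\geq\mu-1$, and satisfies $H_n(\Mackey D_*\tensor_{\stab\Pi_W X}\MackeyOp S)\iso K^W_n(f,\bndry f;\MackeyOp S)$ and $H^n(\Hom_{\stab\Pi_W X}(\Mackey D_*,\Mackey T))\iso K_W^n(f,\bndry f;\Mackey T)$ for all coefficient systems; in particular $\Mackey K^W_n(f,\bndry f)\iso H_n(\Mackey D_*)$.

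Next I would prove that $K^W_n(f,\bndry f;\MackeyOp S)$ and $K_W^n(f,\bndry f;\Mackey T)$ vanish for $n\neq\mu$. For $n<\mu$: when $\bndry M\neq\emptyset$, hypothesis~(3) says $(f,\bndry f)$ is nonequivariantly homologically $\mu$-connected, which, combined with $f^K$ being an equivalence for $K\neq e$, forces the universal-coefficient modules $\Mackey H^W_j(f,\bndry f)$ to vanish for $j\leq\mu$; the Universal Coefficients spectral sequences then give the desired vanishing in degrees $\leq\mu-1$. When $\bndry M=\emptyset$ one runs the same argument from hypotheses~(1) and~(2), invoking equivariant Poincar\'e duality on the closed manifold $M$ to handle the degree $\mu-1$ that $(\mu-1)$-connectivity alone leaves open when $m-w$ is odd. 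For $n>\mu$: the split square preceding the theorem identifies $K^W_n(f,\bndry f;\MackeyOp S)$ with a cokernel $\K_W^{\tau-n}(f;\MackeyOp S)$ of $f^*$ (and likewise on the cohomology side), and this cokernel vanishes once the virtual grading $\tau-n$ is small enough, because $f$ is highly connected and an equivalence on all nontrivial fixed sets; the $RO(\Pi_W X)$-grading is arranged so that ``small enough'' is exactly $n>\mu$.

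Finally, apply Lemma~\ref{lem:stablyfree} to $\Mackey D_*$ with critical degree $\mu$ (note $\mu\geq 2$ since the orbit spaces have dimension $\geq 3$, and $\Mackey D_*$ is nonnegatively graded since it vanishes below degree $\mu-1\geq 1$): its hypothesis~(1) is the vanishing $H_n(\Mackey D_*)=0$ for $n\neq\mu$ just established, and its hypothesis~(2) is the case $n=\mu+1$ of the cohomological vanishing, namely $H^{\mu+1}(\Hom(\Mackey D_*,\Mackey T))\iso K_W^{\mu+1}(f,\bndry f;\Mackey T)=0$. Since $\Mackey D_*$ is finitely generated and free with a $W$-free basis, the lemma yields that $\Mackey K^W_\mu(f,\bndry f)\iso H_\mu(\Mackey D_*)$ is a finitely generated stably free $\stab\Pi_W X$-module with a $W$-free stable basis, as asserted. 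I expect the main obstacle to be precisely the degree bookkeeping: making the ``below $\mu$'' and ``above $\mu$'' vanishing meet at the single integer $\mu$ means carefully tracking the $\tau$-shift built into the dual $W$-CW structures and into Lefschetz duality, and using the extra handle-subtraction input recorded in Theorem~\ref{thm:relativesurgerybelow}(3) in the odd and boundary cases; a secondary technical point is the identification, for $q\colon W/K\to X$, of the value of the equivariant kernel at $q$ with the appropriate local-coefficient kernel of $f^K$, which is what feeds the connectivity hypotheses into the vanishing statements.
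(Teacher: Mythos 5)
Your proposal takes essentially the same route as the paper: approximate the cone of $(f,\bndry f)$ by a relative $W$-CW complex with only free cells starting in high dimensions, read off the vanishing for $n<\mu$ from the cell structure, use Poincar\'e duality to trade $K^W_n(f,\bndry f;\MackeyOp S)$ for the cokernel $\K_W^{\tau-n}(f;\MackeyOp S)$, get the vanishing for $n>\mu$ from that side, and finish with Lemma~\ref{lem:stablyfree}. The top-level skeleton is right, and your observation that the relative cells are $W$-free (hence yield a $W$-free basis for the chain complex, feeding the last clause of Lemma~\ref{lem:stablyfree}) is exactly what the paper uses.

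The one genuine gap is precisely the step you flag as ``the main obstacle'' and then leave unresolved: the $n>\mu$ vanishing. You say the cokernel $\K_W^{\tau-n}(f;\MackeyOp S)$ vanishes once $\tau-n$ is ``small enough,'' with the $RO(\Pi_W X)$-grading ``arranged so that small enough is exactly $n>\mu$.'' That is the content of the theorem, not an argument for it. The mechanism the paper supplies is specific: approximate $Cf$ by a based $W$-CW complex $Z$ with free cells of dimension at least $\mu+1$ (if $m-w$ is even) or at least $\mu$ (if $m-w$ is odd); then reinterpret $Z$ as a \emph{dual} $W$-CW($\tau$) complex, observing that because every cell is free of the form $W/e\times D^n$, it counts as a $(w+n)$-dimensional \emph{dual} cell. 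Since $|\tau|=m$, one computes $\K_W^{\tau-n}(f)=\tilde\H_W^{\tau-n+1}(Z)$, which vanishes once $m-n+1$ is less than the lowest dual-cell dimension, i.e.\ $m-n+1<\mu+w+1$ in the even case and $m-n+1<\mu+w$ in the odd case. Both inequalities simplify to $n>\mu$ precisely because of the floor in $\mu=\lfloor(m-w+1)/2\rfloor$; this is where the two halves meet at a single integer. Without the ``free cells become $w$-shifted dual cells'' observation, the two vanishing ranges need not meet at all, and the argument does not close. A secondary, more cosmetic issue: your relative cell structure starts at dimension $\mu$ (using only $(\mu-1)$-connectivity of $f$), so your $\Mackey D_*$ starts at degree $\mu-1$ and you have to patch in hypothesis (3) to kill $K^W_{\mu-1}$; it is cleaner to use hypothesis (3) from the start, as the paper does, to put all relative cells of $(Cf,C\bndry f)$ in dimension $\geq\mu+1$, making the $n<\mu$ vanishing immediate for all coefficient systems with no spectral sequence needed.
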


\begin{proof}
By definition,
\[
 K^W_n(f,\bndry f;\MackeyOp S)
  = H^W_{n+1}(f,\bndry f;\MackeyOp S)
  \iso H^W_{n+1}(Cf,C\bndry f;\MackeyOp S)
\]
where $Cf$ denotes the mapping cone of $f$ over $X$.
By assumption, $(Cf)^K$ and $(C\bndry f)^K$ are contractible for all proper subgroups $K$
and $(Cf,C\bndry f)$ is nonequivariantly $(\mu+1)$-connected.
Therefore, we can approximate $(Cf,C\bndry f)$ by a relative $W$-CW complex $(Y,\bndry Y)$
over $X$ with only free relative cells of dimension $\mu+1$ or larger. It follows that
\[
 K^W_n(f,\bndry f;\MackeyOp S) = 0 \text{ for $n<\mu$.}
\]
Similarly, 
$Cf$ is nonequivariantly $(\mu+1)$-connected if $m-w$ is even and $\mu$-connected if $m-w$ is odd,
so we can approximate it by a based $W$-CW complex $Z$ over $X$ with only free cells,
of dimension at least $\mu+1$ if $m-w$ is even and at least $\mu$ if $m-w$ is odd.
This cell complex may also be viewed as a dual $W$-CW($\tau$) complex because the cells are free,
however, each cell $W/e\times D^n$ is considered to be a $(w+n)$-dimensional dual cell.
Using the fact that $|\tau| = m$, this gives us
\begin{multline*}
 \K_W^{\tau-n}(f;\MackeyOp S) = \tilde\H_W^{\tau-n+1}(Z;\MackeyOp S) \\
  = 0\text{ if }
  \begin{cases}
   m-n+1 < \mu+w+1 & \text{when $m-w$ is even} \\
   m-n+1 < \mu+w & \text{when $m-w$ is odd.}
  \end{cases}
\end{multline*}
However, the two cases both simplify to say that the cokernel vanishes when
$n > \mu$. As mentioned before the theorem, Poincar\'e duality gives us
\[
 K^W_n(f,\bndry f;\Mackey S) \iso \K_W^{\tau-n}(f;\MackeyOp S).
\]
The left group vanishes when $n < \mu$ while the right group vanishes when $n > \mu$.
Therefore, the only possible nonzero group occurs when $n = \mu$.
The same argument, {\em mutatis mutandi}, 
applies to cohomology, so the first statement of the theorem is shown.

The last statement of the theorem now follows from Lemma~\ref{lem:stablyfree}
applied to the integer-graded chain complex $\Mackey C^W_*(Y,\bndry Y)$,
using that the relative cells of $(Y,\bndry Y)$ are $W$-free.
\end{proof}

We then get the following consequence for the nonequivariant kernel.
This is a version of Petrie's result \cite[Theorem~3.4]{Pe:projectiveClassGroup}.

\begin{corollary}\label{cor:nonequivariantkernel}
With the assumptions of this section, we have
\[
 K_n(f,\bndry f) = 0 \text{ unless $\mu \leq n \leq \mu+w$},
\]
and in that range we have
\[
 K_{\mu+q}(f,\bndry f) \iso K^W_\mu(f,\bndry f; \MackeyOp H_q).
\]
Here, the coefficient system $\MackeyOp H_q$ is given by
\[
 \MackeyOp H_q(y\colon W/K\to X) = H_q(W/K;\Z\Pi X(x,-)),
\]
considering $W/K$ as a nonequivariant space over $X$.
Further, $K_\mu(f,\bndry f)$ is a finitely generated, stably free $\Z\pi_0\Aut(x)$-module.
\end{corollary}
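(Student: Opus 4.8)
The plan is to extract the corollary from a single Atiyah--Hirzebruch spectral sequence that degenerates by Theorem~\ref{thm:equivariantkernel}. First I would introduce the homology theory $\tilde h^W_*$ on $W$-spaces over $X$ defined by $\tilde h^W_*(Y) = \tilde H_*(Y;\Z\Pi X(x,-))$, where $Y$ is regarded as a nonequivariant space over $X$ by forgetting the $W$-action and the coefficients are pulled back along the structure map. Since the underlying space of $S^V$ is $S^{|V|}$ for every representation $V$ of $W$, nonequivariant suspension equips $\tilde h^W_*$ with suspension isomorphisms making it an $RO(W)$-graded homology theory on spaces over $X$ in the sense the Atiyah--Hirzebruch theorem requires. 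Taking $\gamma = 0$ in Definition~\ref{def:coeffsystem}, its coefficient system is
\[
 \MackeyOp h^W_q(r\colon W/K\to X) = \tilde h^W_q(S_X^{0,r}) = \tilde h^W_q((W/K,r)_+) = H_q(W/K;\Z\Pi X(x,-)),
\]
which is precisely the system $\MackeyOp H_q$ of the statement.

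Next I would apply the Atiyah--Hirzebruch spectral sequence for $\tilde h^W_*$ to the pair $(Cf,C\bndry f)$ of mapping cones over $X$ used in the proof of Theorem~\ref{thm:equivariantkernel}. With $\gamma = 0$ it takes the form
\[
 E^2_{p,q} = H^W_p(Cf,C\bndry f;\MackeyOp H_q) = K^W_{p-1}(f,\bndry f;\MackeyOp H_q) \convto H_{p+q}(f,\bndry f;\Z\Pi X(x,-)) = K_{p+q-1}(f,\bndry f),
\]
using $H^W_*(Cf,C\bndry f;-)\iso H^W_*(f,\bndry f;-)$ and its nonequivariant analogue for the abutment. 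By Theorem~\ref{thm:equivariantkernel}, $K^W_n(f,\bndry f;\MackeyOp S) = 0$ for all $n\neq\mu$ and all covariant coefficient systems, so $E^2_{p,q} = 0$ unless $p = \mu+1$; with only one surviving column all differentials vanish, the sequence collapses at $E^2$, and $K_{n-1}(f,\bndry f)\iso E^2_{\mu+1,\,n-\mu-1} = K^W_\mu(f,\bndry f;\MackeyOp H_{n-\mu-1})$, i.e.\ $K_{\mu+q}(f,\bndry f)\iso K^W_\mu(f,\bndry f;\MackeyOp H_q)$ after re-indexing. Since $W/K$ is a compact manifold of dimension at most $w$, $\MackeyOp H_q$ vanishes identically unless $0\le q\le w$, which yields $K_n(f,\bndry f) = 0$ for $n<\mu$ or $n>\mu+w$.

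For the last assertion I would combine this with the Universal Coefficients Spectral Sequences. The vanishing $K^W_n(f,\bndry f;-) = 0$ for $n\neq\mu$ forces $\Tor^{\stab\Pi_W X}_p(\Mackey K^W_\mu(f,\bndry f),-) = 0$ for $p>0$, so $K^W_\mu(f,\bndry f;\MackeyOp S)\iso \Mackey K^W_\mu(f,\bndry f)\tensor_{\stab\Pi_W X}\MackeyOp S$ for every $\MackeyOp S$; in particular $K_\mu(f,\bndry f)\iso \Mackey K^W_\mu(f,\bndry f)\tensor_{\stab\Pi_W X}\MackeyOp H_0$. By Theorem~\ref{thm:equivariantkernel} there is a split short exact sequence $0\to\Mackey K^W_\mu(f,\bndry f)\to F_2\to F_1\to 0$ with $F_1,F_2$ free on $W$-free bases, and applying $-\tensor_{\stab\Pi_W X}\MackeyOp H_0$ — which preserves direct sums and sends $\stab\Pi_W X(-,p)$ to $\MackeyOp H_0(p)$ — presents $K_\mu(f,\bndry f)$ as a stable summand of $\Dirsum_j\MackeyOp H_0(q_j)$ with all $q_j\colon W/e\to X$. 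The main obstacle is this final bookkeeping step: one must identify $\MackeyOp H_0(p) = H_0(W;\Z\Pi X(x,-))$, for $p\colon W/e\to X$, as a free $\Z\pi_0\Aut(x)$-module — of rank one when the orbit of $p$ meets the component $X_x$ and zero otherwise — and check that this module structure is the geometric one on $K_\mu(f,\bndry f)$. Here the extension $1\to\pi_1(X_x,x)\to\Aut(x)\to W_xe\to 1$ enters, computing $\pi_0\Aut(x) = \pi_1(X_x,x)/\mathrm{im}(\pi_1 W_xe)$ and matching the coinvariants $H_0(W;\Z\Pi X(x,-))$ with $\Z[\pi_0\Aut(x)]$. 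Granting this, $K_\mu(f,\bndry f)$ is a stable summand of a finitely generated free $\Z\pi_0\Aut(x)$-module, hence finitely generated and stably free.
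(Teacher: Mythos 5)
Your first two paragraphs match the paper's proof: the paper also treats $H_*(-;\Z\Pi X(x,-))$, with the $W$-action forgotten, as an equivariant homology theory on $W$-spaces over $X$, applies the Atiyah--Hirzebruch spectral sequence, and concludes collapse from Theorem~\ref{thm:equivariantkernel}, then obtains the vanishing range because $W/K$ has dimension at most $w$. (The paper applies the AHSS to $(f,\bndry f)$ directly rather than to the cone pair, but the index shift you introduce is cosmetic.)

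For the last assertion your route is both more elaborate than necessary and incomplete at precisely the point you flag. The universal-coefficients detour (showing $\Tor_p^{\stab\Pi_W X}(\Mackey K^W_\mu,-)=0$ for $p>0$ and then splitting the free resolution) is valid but superfluous: you already have $K_\mu(f,\bndry f)\iso K^W_\mu(f,\bndry f;\MackeyOp H_0)$ from the AHSS collapse, and the paper's Definition~\ref{def:universalcoeffs} says $K^W_\mu(f,\bndry f;\stab\Pi_W X(x,-))=\Mackey K^W_\mu(f,\bndry f)(x)$ by definition, so all that remains is to identify $\MackeyOp H_0$. What the paper actually does --- and what fills the gap you label ``the main obstacle'' --- is compute $\MackeyOp H_0 \iso \stab\Pi_W X(x,-)$ directly, via the chain of isomorphisms
\[
 \stab\Pi_W X(x,y) = [\susp^\infty_W x_+, \susp^\infty_W y_+]^W_X
  \iso [S, \susp^\infty (W/K)_+]_X \iso H_0(W/K;\Z\Pi X(x,-)),
\]
after which evaluating the stably free $\stab\Pi_W X$-module $\Mackey K^W_\mu(f,\bndry f)$ (with $W$-free stable basis) at $x$ immediately yields a stably free module over $\stab\Pi_W X(x,x)\iso\Z\pi_0\Aut(x)$. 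You should supply this identification rather than leaving it as a granted bookkeeping step. Also note that your formula $\pi_0\Aut(x)=\pi_1(X_x,x)/\mathrm{im}(\pi_1 W_xe)$ is only correct when $W_x$ is connected; in general $\pi_0\Aut(x)$ is an extension of $\pi_0 W_x$ by that quotient, as follows from the homotopy exact sequence of $1\to\pi_1(X_x,x)\to\Aut(x)\to W_x\to 1$. This does not affect the truth of the statement but should be fixed if you intend to argue via that extension.
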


\begin{proof}
Consider nonequivariant homology $H_*(-;\Z\Pi X(x,-))$ as an equivariant
homology theory on $W$-spaces over $X$ (by forgetting the $W$-action).
We then have an equivariant Atiyah-Hirzerbruch spectral sequence, which we apply to $(f,\bndry f)$, to get
\[
 E^2_{p,q} = K^W_p(f,\bndry f;\MackeyOp H_q) \convto
  K_{p+q}(f,\bndry f;\Z\Pi X(x,-)).
\]
The preceding theorem tells us that the $E^2$ term is concentrated at $p=\mu$,
hence the spectral sequence collapses and we have the isomorphism claimed in the corollary.
Because $W/K$ is a manifold of dimension no more than $w$, the coefficient system
$\MackeyOp H_q$ vanishes for $q < 0$ or $q>w$, hence we get the vanishing result
stated.

The last statement of the corollary follows from the last statement
of the preceding theorem and the calculation
\[
 \MackeyOp H_0 = H_0(-;\Z\Pi X(x,-)) \iso \stab\Pi_W X(x,-).
\]
To see this, let $y\colon W/K\to X$ and consider
\begin{align*}
 \stab\Pi_W X(x,y)
  &= [\susp^\infty_W x_+, \susp^\infty_W y_+]^W_X \\
  &= [W/e_+\smsh S, \susp^\infty_W G/K_+]^W_X \\
  &\iso [S, \susp^\infty G/K_+]_X.
\end{align*}
We can calculate this last group by taking the universal cover $\tilde X\to X$,
pulling back along $y$ to a covering space over $G/K$, and then taking 
the free group on $\pi_0$
of the total space. But this is the same as the 0th homology of the total space,
which is $H_0(G/K;\Z\Pi X(x,-))$.
Finally, we note that
\[
 \stab\Pi_WX(x,x) \iso \Z\pi_0\Aut(x).
\]
because $x\colon W/e\to X$,
so the fact that $\Mackey K^W_\mu(f,\bndry f)$ is a stably free $\stab\Pi_G X$-module
with $W$-free stable basis implies that
$K^W_\mu(f,\bndry f;\stab\Pi_W X(x,-))$ is a stably free $\Z\pi_0\Aut(x)$-module.
\end{proof}

%%%%%%%%%%%%%%%%%%%%%%%%%%%%%%%%%%%%
\section{The $\Pi$-$\Pi$ Theorem}

We can now prove a $\Pi$-$\Pi$ theorem following the argument given by Wall
in \cite{Wal:surgery}.

\begin{theorem}[$\Pi$-$\Pi$ Theorem]
Let $(X,\bndry X)$ be a $G$-Poincar\'e duality pair of dimension $\tau$. Suppose that $\tau$ is ideal, satisfies the gap hypothesis, and has fixed sets of dimension at least 6. Suppose further that $\Pi_G \bndry X \to \Pi_G X$ is an equivalence of groupoids over $\orb G$. If $M$ is a smooth compact $G$-manifold and $(f,\xi,t) \colon (M,\bndry M) \to (X, \bndry X)$ is a degree one normal map, then $(f,\xi,t)$ is normally cobordant to a $G$-homotopy equivalence.
\end{theorem}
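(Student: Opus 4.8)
The plan is to follow Wall's proof of his $\pi$-$\pi$ theorem \cite{Wal:surgery}, organized as an induction over the isotropy groups of $M$ and powered by the results of the previous sections. Since $(X,\bndry X)$ is a $G$-Poincar\'e duality pair we may assume it is an honest pair of finite $G$-CW($\tau$) complexes, and the normal map structure transfers the hypotheses on $\tau$ --- ideal, gap hypothesis, and fixed sets of dimension at least $6$ (so that $m_x-w_x\geq 6$, hence $\mu_x\geq 3$, and in particular orbit spaces of dimension $\geq 3$) --- to $M$. Because $\Pi_G\bndry X\to\Pi_G X$ is an equivalence over $\orb G$, every component $X^H_x$ meets $\bndry X$, so $\bndry M^H_x\neq\emptyset$ for every $x$ ($\bndry f$ being a $\Pi_0$-surjection, which is automatic for a degree one map).

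The first step is to apply Theorem~\ref{thm:relativesurgerybelow} to replace $(f,\xi,t)$, through a normal cobordism, by a normal map that is connected up to the critical dimension: for each $x$ with isotropy $H$, $f^H_x$ is $\mu_x$-connected (or $(\mu_x-1)$-connected when $m_x-w_x$ is odd), $\bndry f^H_x$ is $(\mu_x-1)$-connected, and $(f^H_x,\bndry f^H_x)$ is homologically $\mu_x$-connected. Since $\mu_x\geq 3$ all these maps are $2$-connected, so $\Pi_G M\to\Pi_G X$ and $\Pi_G\bndry M\to\Pi_G\bndry X$ are equivalences over $\orb G$ and $\pi_1(\bndry M^H_x)\to\pi_1(M^H_x)$ is an isomorphism; thus the relative Hurewicz theorem is available below twice the critical dimension.

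Then I would induct on the isotropy groups of $M$. Fix $H$, assume $f^K$ and $\bndry f^K$ are $G$-homotopy equivalences for every $K\supsetneq H$, and consider the $W_xH$-map $(f^H_x,\bndry f^H_x)$, recast --- as in the discussion preceding Theorem~\ref{thm:equivariantkernel} --- as a degree one map of a compact $W$-manifold to a $W$-Poincar\'e pair ($W=W_xH$) with all proper fixed sets already equivalences. By Theorem~\ref{thm:equivariantkernel} the only nonvanishing surgery kernel of the pair is $\Mackey K^W_\mu(f,\bndry f)$, a finitely generated stably free $\stab\Pi_W X$-module with $W$-free stable basis. A standard stabilization --- a finite sequence of trivial $W$-free surgeries, affecting neither the higher strata nor the connectivity already achieved --- makes $\Mackey K^W_\mu(f,\bndry f)$ free with $W$-free basis. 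By Corollary~\ref{cor:nonequivariantkernel} together with the relative Hurewicz isomorphism, this basis is realized by a finite set of elements of $\pi_\mu(f^H_x,\bndry f^H_x)$, each presented by a quadruple $(\alpha,\beta,\gamma,\delta)$ as in the handle-subtraction set-up. Because $n=\mu=\lfloor(m-w+1)/2\rfloor$ and $m-w\geq 6$, and using the gap hypothesis and the ideal condition, Corollary~\ref{cor:handlesubtraction} lets us do handle subtraction on each; a general-position and Whitney-trick argument in the orbit manifold $\breve M^H_x/W_xH$ (of dimension $\geq 6$) makes the handles pairwise disjoint, so we may subtract them all simultaneously.

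After this handle subtraction $\Mackey K^W_\mu(f,\bndry f)=0$ (and nothing new appears, as the neighboring kernel groups already vanish), so by Corollary~\ref{cor:universalisomorphism} all surgery kernels and cokernels of the pair vanish; Lefschetz duality for the surgery kernels (the reversed form of the diagrams preceding Theorem~\ref{thm:equivariantkernel}) then gives $\Mackey K^W_*(f)=0$, and the kernel long exact sequence gives $\Mackey K^W_*(\bndry f)=0$. By Theorem~\ref{thm:homotopyequivalence}, applied to $f^H_x$ and $\bndry f^H_x$ (already equivalences of fundamental groupoids), these maps are $G$-homotopy equivalences, completing the inductive step; since the surgeries used at the stratum of $H$ leave the strata of all $K\supsetneq H$ untouched and create no new isotropy, the induction terminates with $f$ a $G$-homotopy equivalence, and the union of the traces of all surgeries performed is the required normal cobordism. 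I expect the main obstacle to be the handle-subtraction step itself: realizing the stably free module $\Mackey K^W_\mu(f,\bndry f)$ by a system of pairwise disjoint embedded framed handles and verifying that subtracting them kills the kernel exactly, since this is where the gap hypothesis, the ideal condition, and the dimension bound all enter, through an equivariant Whitney trick carried out in the quotient $\breve M^H_x/W_xH$. A secondary point needing care throughout is that every surgery performed while treating the stratum of $H$ must leave all strata above $H$ undisturbed.
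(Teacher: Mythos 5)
Your overall strategy---do surgery below the critical dimension via Theorem~\ref{thm:relativesurgerybelow}, then induct on isotropy groups, stabilizing the kernel $\Mackey K^{W_xH}_{\mu_x}(f^H_x,\bndry f^H_x)$ to a free module with $W_xH$-free basis and killing it by handle subtraction via Corollary~\ref{cor:handlesubtraction} after a piping/Whitney argument in the orbit manifold $\breve M^H_x/W_xH$---matches the paper's plan in broad outline, and your bookkeeping concerning the gap hypothesis, the ideal condition, and non-interference with higher strata is on target.

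The significant gap is that you run a single uniform argument while the paper splits into two genuinely different cases according to the parity of $m_x - w_x$. In the \emph{even} case $\mu_x = (m_x-w_x)/2$, and after surgery below the critical dimension $f^H_x$ is $\mu_x$-connected, $\bndry f^H_x$ is $(\mu_x-1)$-connected, and $(f^H_x,\bndry f^H_x)$ is homologically $\mu_x$-connected; here your description (stabilize $\Mackey K^{W_xH}_{\mu_x}(f^H_x,\bndry f^H_x)$ to a free module, realize a basis by disjoint embedded handles meeting the boundary, subtract) is essentially what the paper does, with the precise vanishing of the remaining kernels established by the excision/long-exact-sequence computation showing $\Mackey H^{W_xH}_*(M_0,\bndry M_0)\iso\Mackey H^{W_xH}_*(X^H_x,\bndry X^H_x)$. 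In the \emph{odd} case, however, $\mu_x = (m_x-w_x+1)/2$ and after surgery below the critical dimension $f^H_x$ is only $(\mu_x-1)$-connected; the absolute kernels $\Mackey K^{W_xH}_*(f^H_x)$ and $\Mackey K^{W_xH}_*(\bndry f^H_x)$ are concentrated in degree $\mu_x-1$, not $\mu_x$, and they sit in a nontrivial short exact sequence with $\Mackey K^{W_xH}_{\mu_x}(f^H_x,\bndry f^H_x)$. Subtracting handles that realize a basis of the pair kernel does not by itself kill $\Mackey K^{W_xH}_{\mu_x-1}(\bndry f^H_x)$, so your claim that "after this handle subtraction $\dots$ all surgery kernels and cokernels of the pair vanish" fails in this case. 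The paper instead first \emph{attaches} handles to $\bndry M^H_x$ along the spheres, then uses a duality argument (the triviality of a connecting homomorphism between dual kernels and cokernels, propagated through universal-coefficient considerations) to show that the new boundary map is already an equivalence, and only then performs an additional round of \emph{interior} surgeries in degree $\mu_x-1$ to kill $\Mackey K^{W_xH}_{\mu_x-1}(f^H_x)$, verifying the net effect via a comparison between the traces of the two stages. This two-stage odd-case argument is the heart of what is missing from your proposal and cannot be obtained by a minor adjustment of the even-case handle subtraction.

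A secondary point: in the even case the isomorphism from $\pi_{\mu_x+1}(f^H_x,\bndry f^H_x)$ to the kernel is the Hurewicz--Namioka theorem for squares (not merely relative Hurewicz), and its applicability depends on the specific connectivities just cited; you should state that explicitly. Also, your concluding chain of reasoning ("duality gives $\Mackey K^W_*(f)=0$, the long exact sequence gives $\Mackey K^W_*(\bndry f)=0$") is fine as a summary of the even case but needs to be anchored to the excision isomorphism described above rather than to the vacuous statement that $\Mackey K^W_\mu(f,\bndry f)=0$ implies vanishing of the others by Corollary~\ref{cor:universalisomorphism}.
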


\begin{proof}
We assume that we've done surgery below the critical dimension, 
as in Theorem~\ref{thm:relativesurgerybelow}.
We proceed by induction on fixed sets, so assume that $H$ is an isotropy subgroup of $M$ and that
$f^K$ and $\bndry f^K$ are homotopy equivalences for each $K > H$.
Let $x$ be a point of $M$ with isotropy $H$;
we use the notations introduced in Definitions~\ref{def:fixeddimensions}
and~\ref{def:fixedcomponents}.
We look at two separate cases, where $m_x-w_x$ is even and where it is odd.
%Take all $W_xH$-equivariant homology that follows with coefficients in $\stab\Pi_{W_xH}X^H_x(x,-)$.

\vskip 1ex
\noindent {\em Case $m_x-w_x$ even.}

We've assumed that we've done surgery below the critical dimension, so we have that
$f^H_x\colon M^H_x \to X^H_x$ is $\mu_x$-connected, $\bndry f^H_x$ is $(\mu_x-1)$-connected,
and $(f^H_x,\bndry f^H_x)$ is homologically $\mu_x$-connected.
It follows from Theorem~\ref{thm:equivariantkernel} that
$\Mackey K^{W_xH}_{n}(f^H_x,\bndry f^H_x) = 0$ for $n\neq\mu_x$ and that
$\Mackey K^{W_xH}_{\mu_x}(f^H_x,\bndry f^H_x)$ is 
a finitely generated, stably free $\stab\Pi_{W_xH}X^H_x$-module
with $W_xH$-free stable basis.

By Corollary~\ref{cor:surgerypossible}, 
we may do surgery on a trivial $(\mu_x-1)$-sphere in $\bndry M^H_x$,
which is to say, attach a handle to $M^H_x$ of the form
$W_xH \times D^{\mu_x}\times D^{m_x-w_x-\mu_x} = W_xH \times D^{\mu_x}\times D^{\mu_x}$ 
along an embedding
$W_xH\times S^{\mu_x-1}\times D^{\mu_x}\to \bndry M^X_H$;
this is (part of) the fixed set of a handle of the form
$G\times_H(D^{\mu_x}\times D^{\mu_x}\times D(W))$ where $W^H = 0$.
Because the attaching map is trivial, this has the effect in dimension $\mu_x$
of adding a free $\stab\Pi_{W_xH}X^H_x$-module, with $W_xH$-free basis, 
to $\Mackey K_{\mu_x}(f^H_x,\bndry f^H_x)$
(and it does not change the kernel below this dimension).
Thus, after attaching sufficiently many such handles, 
we may assume that $\Mackey K_{\mu_x}(f^H_x,\bndry f^H_x)$ is not just stably free,
but is actually a free
$\stab\Pi_{W_xH}X^H_x$-module with a finite, $W_xH$-free basis.

By Namioka \cite{Na:pairs} and Corollary~\ref{cor:nonequivariantkernel}, 
we have an isomorphism of $\Z\pi_0\Aut(x)$-modules
\[
 \pi_{\mu_x+1}(f^H_x,\bndry f^H_x) \iso K_{\mu_x}(f^H_x,\bndry f^H_x)
  \iso K^{W_xH}_{\mu_x}(f^H_x,\bndry f^H_x;\stab\Pi_{W_xH}X^H_x(x,-)).
\]
Using Theorem~\ref{thm:handlesubtraction}, 
we can therefore represent a basis of $\Mackey K^{W_xH}_{\mu_x}(f^H_x,\bndry f^H_x)$ by
immersions
\[
 \alpha_i\colon W_xH\times (D^{\mu_x}\times D^{\mu_x}, S^{\mu_x-1}\times D^{\mu_x})
  \to (M^H_x,\bndry M^H_x).
\]
Because these immersions land in the $W_xH$-free part of $M^H_x$, we can consider
the restrictions to $e\times D^{\mu_x}\times D^{\mu_x}$ and their
images in $M^H_x/W_xH$, and apply the ``piping'' argument in \cite[Ch.\ 4]{Wal:surgery}
verbatim to make those images homotopic to disjoint embeddings, then lift up the fibration,
so that we can make the $\alpha_i$ be a collection of disjoint embeddings.

We then perform handle subtraction using Corollary~\ref{cor:handlesubtraction}.
Let $U \homeo \coprod_i W_xH\times D^{\mu_x}\times D^{\mu_x}$ be the union of the image of
the $\alpha_i$ and let $M_0$ be result of removing the interior of $U$ from $M_x^H$.
Then $\Mackey H^{W_xH}_{\mu_x}(U,U\intersect \bndry M_x^H)$ is a free 
$\stab\Pi_{W_xH}X^H_x$-module with $W_xH$-free basis, by the dimension
axiom, and
\[
 \Mackey H^{W_xH}_n(U,U\intersect \bndry M_x^H)
  \iso \Mackey H^{W_xH}_n(U\union\bndry M_x^H, \bndry M_x^H)
  \to \Mackey K^{W_xH}_n(f_x^H,\bndry f_x^H)
\]
is an isomorphism for all $n$ by the choice of the $\alpha_i$ and the fact that these groups are
nonzero only in dimension $\mu_x$.
We also have the following diagram of long exact sequences:
\[
 \xymatrix{
   \Mackey H^{W_xH}_n(U\union\bndry M_x^H, \bndry M_x^H) \ar[r] \ar[d]_\iso
    & \Mackey H^{W_xH}_n(M_x^H,\bndry M_x^H) \ar[r] \ar@{=}[d]
    & \Mackey H^{W_xH}_n(M_x^H, U\union\bndry M_x^H) \ar[d] \\
   \Mackey K^{W_xH}_n(f_x^H,\bndry f_x^H) \ar@{>->}[r]
    & \Mackey H^{W_xH}_n(M_x^H,\bndry M_x^H) \ar@{->>}[r]
    & \Mackey H^{W_xH}_n(X_x^H, \bndry X_x^H)
 }
\]
This implies that
\[
 \Mackey H^{W_xH}_*(M_0,\bndry M_0) \iso \Mackey H^{W_xH}_*(M_x^H, U\union\bndry M_x^H)
  \iso \Mackey H^{W_xH}_*(X_x^H, \bndry X_x^H).
\]
Duality then gives $\Mackey \H_{W_xH}^*(X_x^H) \iso \Mackey \H_{W_xH}^*(M_0)$, which, 
by Corollary~\ref{cor:universalisomorphism},
implies
$\Mackey H^{W_xH}_*(M_0) \iso \Mackey H^{W_xH}_*(X_x^H)$.
The long exact sequence in homology then gives 
$\Mackey H^{W_xH}_*(\bndry M_0) \iso \Mackey H^{W_xH}_*(\bndry X_x^H)$.
This completes the case when $m_x-w_x$ is even.

\vskip 1ex
\noindent {\em Case $m_x-w_x$ odd.}

In this case, we have $\Mackey K^{W_xH}_n(f_x^H,\bndry f_x^H) = 0$ unless $n = \mu_x$, and
$\Mackey K^{W_xH}_n(f_x^H) = 0$ unless $n = \mu_x-1$. Hence, we have a short exact sequence
\[
 0 \to \Mackey K^{W_xH}_{\mu_x}(f_x^H,\bndry f_x^H) \to \Mackey K^{W_xH}_{\mu_x-1}(\bndry f_x^H)
  \to \Mackey K^{W_xH}_{\mu_x-1}(f_x^H) \to 0.
\]
As before, we can perform trivial surgeries on $\bndry M$ to convert these stably free
modules to free modules with $W_xH$-free bases. 
We then represent a basis of $\Mackey K^{W_xH}_{\mu_x}(f_x^H,\bndry f_x^H)$
by a collection of immersions 
\[
 \alpha_i\colon W_xH \times (D^{\mu_x}\times D^{\mu_x-1},S^{\mu_x-1}\times D^{\mu_x-1})
   \to (M_x^H, \bndry M_x^H)
\]
that extend to immersions $\alpha_i\times D(W)$ in $(M,\bndry M)$.
By looking at the images in the $W_xH$ orbit space, using the argument in \cite[Ch.\ 4]{Wal:surgery},
we can modify the $\alpha_i$ so that the restrictions to bounding spheres give disjoint
embeddings in $\bndry M_x^H$.

Now attach handles to $\bndry M_x^H$ using the restrictions 
$\alpha_i|W_xH\times S^{\mu_x-1}\times D^{\mu_x-1}\times D(W)$,
let $(\bar M, \bndry \bar M)$ be the resulting manifold.
Let $\bar f\colon \bar M^H_x\to X_x^H$ be the resulting map, and
let $U$ be the union of the attached handles.

Because the attaching maps are null-homotopic in $M_x^H$, we have
\[
 \Mackey K^{W_xH}_{\mu_x-1}(\bar M^H_x) \iso \Mackey K^{W_xH}_{\mu_x-1}(M_x^H)
\]
and $\Mackey K^{W_xH}_{\mu_x}(\bar M^H_x)$ is free, with a $W_xH$-basis given by the images of the thickened balls
consisting of the original $\alpha_i$ with the handles attached.

The triple
$\bndry \bar M^H_x\to \bndry \bar M^H_x\union U \to \bar M^H_x$ gives the following exact sequence,
using excision:
\begin{multline*}
 0 \to \Mackey K^{W_xH}_{\mu_x}(\bar f^H_x,\bndry \bar f^H_x) 
  \to \Mackey K^{W_xH}_{\mu_x}(f_x^H,\bndry f_x^H) \\
  \to \Mackey H^{W_xH}_{\mu_x-1}(U, U\intersect \bndry \bar M^H_x)
  \to \Mackey K^{W_xH}_{\mu_x-1}(\bar f^H_x,\bndry \bar f^H_x) \to 0.
\end{multline*}
The dual of the connecting homomorphism in the middle of this exact sequence is the connecting map
\[
 \Mackey \K_{W_xH}^{w_x+\mu_x-1}(f_x^H) 
  \to \Mackey \H_{W_xH}^{w_x+\mu_x}(\bar M^H_x,M_x^H)
  \iso \Mackey \H_{W_xH}^{w_x+\mu_x}(U,U\intersect \bndry M_x^H).
\]
This map is zero because the attaching maps are trivial in $M_x^H$,
which makes the next map in the long exact sequence a split inclusion.
Therefore, the long exact sequence above splits into two isomorphisms,
\[
 \Mackey K^{W_xH}_{\mu_x}(\bar f^H_x,\bndry \bar f^H_x) 
  \iso \Mackey K^{W_xH}_{\mu_x}(f_x^H,\bndry f_x^H)
\]
and
\[
 \Mackey H^{W_xH}_{\mu_x-1}(U, U\intersect \bndry \bar M^H_x)
  \iso \Mackey K^{W_xH}_{\mu_x-1}(\bar f^H_x,\bndry \bar f^H_x).
\]
By construction, the map
\[
 \Mackey K^{W_xH}_{\mu_x}(\bar f^H_x) 
  \to \Mackey K^{W_xH}_{\mu_x}(\bar f^H_x,\bndry \bar f^H_x) 
  \iso \Mackey K^{W_xH}_{\mu_x}(f_x^H,\bndry f_x^H)
\]
is an isomorphism. Therefore, the dual map
\[
 \Mackey \K_{W_xH}^{w_x+\mu_x-1}(\bar f^H_x,\bndry \bar f^H_x) 
 \xrightarrow{\iso} \Mackey \K_{W_xH}^{w_x+\mu_x-1}(\bar f^H_x)
\]
is also an isomorphism.
Because these are the first nonzero degrees in both dual homology and cohomology, it follows that
the map
\[
 \MackeyOp \K^{W_xH}_{w_x+\mu_x-1}(\bar f^H_x) 
  \xrightarrow{\iso} \MackeyOp \K^{W_xH}_{w_x+\mu_x-1}(\bar f^H_x,\bndry \bar f^H_x)
\]
is also an isomorphism.
Because the cells are all free, we can reinterpret dual cells as ordinary cells
with a shift by $w_x$, and it then follows that
\[
 \Mackey K^{W_xH}_{\mu_x-1}(\bar f^H_x) 
  \xrightarrow{\iso} \Mackey K^{W_xH}_{\mu_x-1}(\bar f^H_x,\bndry \bar f^H_x)
\]
is an isomorphism, which completes showing that these modules are isomorphic
in all degrees.

From the long exact sequence, we have that $\Mackey K^{W_xH}_*(\bndry \bar f^H_x) = 0$, 
hence $\bndry \bar M^H_x\to \bndry X^H_x$
is an equivalence.

By the equivariant Atiyah-Hirzebruch spectral sequence,
the nonequivariant kernel $K_{\mu_x-1}(\bar f^H_x)$ is isomorphic to 
$\Mackey K^{W_xH}_{\mu_x-1}(\bar f^H_x)(x)$,
so is a free $\Z\pi_0\Aut(x)$-module. 
Choose a basis and then, using the Hurewicz-Namioka isomorphism,
represent the basis elements as elements of $\pi_{\mu_x}(\bar f^H_x)$,
and use them to do surgery away from the boundary,
getting a new manifold $\hat f\colon \hat M\to X$ with $\bndry \hat M = \bndry \bar M$.
Let $P$ be the trace of the surgery and identify $\bar M\union \bndry \bar M\times I$ with $\bar M$,
so that $\bndry P =  \bar M \union_{\bndry \bar M} \hat M$. We then have a map
\[
 F\colon (P; \bar M, \hat M) \to (X\times I; X\times 0\union \bndry X\times I, X\time 1).
\]
Considering the pair $(P,\bar M)$, for which 
$\Mackey H^{W_xH}_*(P^H_x,\bar M^H_x)$ is concentrated in degree $\mu_x$,
we get the following exact sequence:
\begin{multline*}
 0 \to \Mackey K^{W_xH}_{\mu_x}(\bar f^H_x) 
   \to \Mackey K^{W_xH}_{\mu_x}(F^H_x) \\
   \to \Mackey H^{W_xH}_{\mu_x}(P^H_x,\bar M^H_x)
   \xrightarrow{\bndry_1} \Mackey K^{W_xH}_{\mu_x-1}(\bar f^H_x) 
   \to \Mackey K^{W_xH}_{\mu_x-1}(F^H_x) \to 0
\end{multline*}
Further, $\bndry_1$ is an isomorphism by construction, so 
$\Mackey K^{W_xH}_{\mu_x-1}(F^H_x) = 0$
and 
\[
 \Mackey K^{W_xH}_{\mu_x}(\bar f^H_x) \iso \Mackey K^{W_xH}_{\mu_x}(F^H_x).
\]
Now consider the pair $(P,\hat M)$, which gives the following exact sequence:
\[
 0 \to \Mackey K^{W_xH}_{\mu_x}(\hat f^H_x) 
   \to \Mackey K^{W_xH}_{\mu_x}(F^H_x) 
   \xrightarrow{i_*} \Mackey H^{W_xH}_{\mu_x}(P^H_x,\hat M^H_x)
   \to \Mackey K^{W_xH}_{\mu_x-1}(\hat f^H_x) \to 0
\]
By a similar argument to the one we made earlier, $i_*$ is dual to $\bndry_1$,
hence is an isomorphism. From this we see that
\[
 \Mackey K^{W_xH}_{\mu_x}(\hat f^H_x) = 0 = \Mackey K^{W_xH}_{\mu_x-1}(\hat f^H_x),
\]
so $\Mackey K^{W_xH}_*(\hat f^H_x) = 0$ in all degrees, i.e., $\hat M^H_x\to X^H_x$ is an equivalence.
Because we know that $\bndry \hat M^H_x\to \bndry X^H_x$ is an equivalence, it follows from the long
exact sequence that we have $\Mackey K^{W_xH}_*(\hat f^H_x,\bndry \hat f^H_x) = 0$ as well.

This completes the case in which $m_x-w_x$ is odd, 
and the full result is completed by induction on $H$.
\end{proof}

%backmatter
%\bibliographystyle{amsalpha}
\bibliographystyle{amsplain}
\bibliography{Topology}

\end{document}